\DeclareMathOperator*{\minn}{min}
\def\Z{{\mathbb Z}}\def\T{{\mathbb T}}\def\R{{\mathbb R}}\def\C{{\mathbb C}}
\def\CC{{\mathcal C}}\def\LL{{\mathcal L}}
\def\GG{{\mathcal G}}
\def\EE{{\mathcal E}}
\def\OO{{\mathcal O}}
\def\MM{{\mathcal M}}\def\PP{{\mathcal P}}
\def\bC{{\mathbb C}}
\def\cC{{\mathcal C}}\def\cL{{\mathcal L}}
\def\cB{{\mathcal B}}\def\cG{{\mathcal G}}
\def\cT{{\mathcal T}}
\def\cD{{\mathcal D}}
\def\cH{{\mathcal H}}
\def\cM{{\mathcal M}}
\def\cU{{\mathcal U}}
\def\f{\varphi}\def\o{\omega}
\def\g{\gamma}\def\ga{\gamma}
\def\s{\sigma}\def\k{\kappa}
\def\al{\alpha}\def\be{\beta}\def\ep{\varepsilon}
\def\be{\beta}\def\t{\tau}\def\la{\lambda}\def\m{\mu}
\def\L{\Lambda}\def\G{\Gamma}
\def\i{\infty}
\def\p{\partial}
\def\Leb{\mathrm{Leb}}
\def\b#1{\lbrace#1\rbrace}
\def\aa#1{\left\Vert#1\right\Vert}
\def\l#1{\langle #1\rangle}
\def\lan{\langle}
\def\ran{\rangle}
\def\<{\langle}
\def\>{\rangle}
\def\sm{\setminus}
\def\lsim{\lesssim}
\def\Lin{\mathrm{Lin}}
\def\w{\mathrm{anal}}
\theoremstyle{plain}
\newtheorem{Thm}{Theorem}[section]
\newtheorem{Prop}[Thm]{Proposition}
\newtheorem{Lem}[Thm]{Lemma}
\newtheorem{Cor}[Thm]{Corollary}
\newtheorem*{Rem}{Remark}
\newtheorem*{Cor*}{Corollary}
\newtheorem*{Prop*}{Proposition}
\newtheorem{Main}{Theorem}
\newtheorem*{Principal}{Theorem}
\newtheorem*{Russ}{Theorem (R\"ussmann)}
\def\cC{\mathcal C}
\def\carre{ \hfill $\Box$    }
\def\a{\alpha}
\def\w{\omega}
\def\pa{\partial}
\def\s{\sigma}
\let\newpf\proof \let\proof\relax
\newcommand{\ba}{\overline{A}}
\newcommand{\cO}{\mathcal{O}}
\def\be{\begin{equation}}
\def\ee{\end{equation}}
\def\ba{{\begin{align}}}
\def\ea{{\end{align}}}
\def\bm{\begin{matrix}}
\def\em{\end{matrix}}
\def\a{{\alpha}}
\def\cL{{\mathcal L}}
\def\g{{\gamma}}
\def\bD{\mathbb{D}}
\def\bN{\mathbb{N}}
\def\0{{\mathbf 0}}
\def\cal{\mathcal}
\newtheorem{thm}{Theorem}[section]
\newtheorem{lemma}[thm]{Lemma}
\theoremstyle{remark}
\def\cO{\mathcal{O}}
\def\cB{\mathcal{B}}
\def\cC{\mathcal{C}}
\theoremstyle{definition}
\newenvironment{proof}{ \noindent{\it Proof.}\quad}{\ \hfill $\Box$\vskip .2cm}
\def\sm{\smallsetminus}
\def\ssm{\smallsetminus}
\renewcommand{\setminus}{\ssm}
\newcommand{\id}{\operatorname{id}}
\newcommand{\eps}{{\epsilon}}
\newcommand{\de}{{\delta}}
\newcommand{\Om}{{\Omega}}
\newcommand{\om}{{\omega}}
\newcommand{\N}{{\mathbb N}}
\newcommand{\bR}{{\mathbb R}}
\newcommand{\bT}{{\mathbb T}}
\newcommand{\bZ}{{\mathbb Z}}
\def\B0{{\bold{0}}}
\def\be{\begin{equation}}
\def\ee{\end{equation}}
\def\Empty{}
\newcommand\oplabel[1]{
  \def\OpArg{#1} \ifx \OpArg\Empty {} \else
  	\label{#1}
  \fi}
\newcommand{\comm}[1]{}
\newcommand{\comment}[1]{}
\begin{document}

\title{Around the stability of KAM-tori}
\author{L. H. Eliasson, B. Fayad, R. Krikorian}
\address{
IMJ-PRG University Paris-Diderot\\
IMJ-PRG CNRS\\
LPMA UPMC}
\email{hakan.eliasson@math.jussieu.fr, bassam@math.jussieu.fr, raphael.krikorian@upmc.fr }

\date{\today}

\thanks{Supported by  ANR-10-BLAN 0102}

\maketitle

\begin{abstract} 

$ \ $ 
We study the accumulation of an invariant quasi-periodic torus  of a hamiltonian flow by other quasi-periodic invariant tori. 


We show that an analytic invariant torus $\cT_0$ with Diophantine frequency $\o_0$ is never isolated due to the following alternative.   If the  Birkhoff normal form of the Hamiltonian  at $\cT_0$ satisfies a R\"ussmann transversality condition, the  torus $\cT_0$ is accumulated by KAM tori of positive total measure. If the Birkhoff normal form is degenerate,  there exists a subvariety of dimension at least $d+1$ that is foliated by analytic invariant tori with frequency $\o_0$.


For frequency vectors $\o_0$ having a finite uniform Diophantine exponent  (this includes a residual set of Liouville vectors), we show that if the Hamiltonian $H$ satisfies  a Kolmogorov non degeneracy condition at $\cT_0$, then $\cT_0$ is accumulated by KAM tori of positive total measure.


In $4$ degrees of freedom or more, we construct for any $\o_0 \in \R^d$,  $C^\infty$ (Gevrey) Hamiltonians $H$ with a smooth invariant torus $\cT_0$  with frequency $\o_0$ that is not  accumulated by a positive measure of invariant  tori. 

\end{abstract}

\tableofcontents 

\section{Introduction}\label{sIntro}

Let 
\begin{equation} H(\f,r)= \l{\o_0,r}+\cO(r^2) \label{HH} \end{equation} 
be a $C^2$ function
defined for $\f\in \bT^d=\bR^d/\bZ^d$ and $r\sim 0\in\bR^d$. 

The {\it Hamiltonian system} associated to $H$ is given by 
$$(*)_H\quad\left\{\begin{array}{l}
\dot \f=\pa_{r}H(\f,r)\\ 
\dot r=-\pa_{\f}H(\f,r).\end{array}\right.$$
Clearly the torus $\bT^d\times\b{0}$ is invariant under the Hamiltonian flow
and the induced dynamics is the translation
$$(t,\f)\mapsto \f+t\o_0.$$
Moreover this torus is Lagrangian with respect to the 
canonical symplectic form $d\f\wedge dr$ on $\bT^d\times \bR^d$.

The objective of this paper is to investigate the "KAM stability" of the torus $\cT_0:=\bT^d\times\b{0}$ under different hypothesis on $H$ and $\o_0$. 
We first explain what we understand by "KAM stability". In its stronger form, we use this  terminology to refer to the classical KAM (after Kolmogorov, Arnol'd, Moser) phenomenon of accumulation of 
$\cT_0$ by invariant tori whose Lebesgue density in the phase space tend to one in the neighborhood of $\cT_0$ and whose frequencies cover a set of positive measure. More precisely, a vector $\o$ is said to be {\it Diophantine} if there exist
$\k>0,\ \t>d-1$
 such that
\begin{equation} |\<k,\o\>|\ge \frac{\k}{|k|^{\t}}\quad \forall k\in\Z^d\sm \{0\} \label{cd} \end{equation}
We then use the notation $\omega \in DC(\k,\t)$ where $\tau$ is the Diophantine {\it exponent}    of $\omega$ and $\kappa$ its Diophantine constant.   We say that a $C^r$ (or smooth, or  analytic) invariant Lagrangian torus with an induced flow
that is $C^r$ (or smoothly, or analytically)  conjugated to a Diophantine translation 
$$(t,\f)\mapsto \f+t\o$$
is a $C^r$ (smooth, analytic)  {\it KAM-torus} of $(*)_H$ with  
{\it translation vector} $\o$.

We still say that $\cT_0$ is "KAM stable", in a slightly weaker sense, if we drop the requirement that the frequencies cover a set of positive measure. 

When we prove the accumulation of $\cT_0$ by invariant KAM tori but we do not know if their measure is positive we simply say that $\cT_0$ is accumulated by KAM tori and do not speak of stability.

In this paper we deal essentially with the following situations and results. Unless otherwise mentioned the Hamiltonian $H$ is assumed to be analytic as well as $\cT_0$ and the KAM tori that are obtained.  The exact statements and notations will be deferred to the next section.

\begin{itemize}
\item[(i)] If $\o_0$ is Diophantine then $\cT_0$ is accumulated by KAM tori.  
\item[(ii)]  If $\o_0$ is Diophantine and if the  Birkhoff normal form (BNF) of $H$ satisfies a R\"ussmann transversality condition at $\cT_0$ (see Section \ref{analy}), then $\cT_0$ is KAM stable. 
\item[(iii)] In two degrees of freedom ($d=2$), if $\o_0$ has  rationally independent coordinates and if $H$ satisfies  a Kolmogorov non degeneracy condition of its Hessian matrix at $\cT_0$ (see Section \ref{liouv}), $\cT_0$ is KAM stable.
    For $d\geq 3$, we get KAM stability for a class of $\o_0$ that includes all vectors except a meagre set of zero Hausdorff dimension. 
\item[(iv)] For $d\geq 4$, for any $\o_0 \in \R^d$, there exists a $C^\infty$ (Gevrey) $H$ as in \eqref{HH} such that $\cT_0$ is not KAM stable (no positive measure of accumulating tori). 
\item[(v)] For $d=2$, if $\o_0$ is Diophantine and $H$ is smooth $\cT_0$ is KAM stable. 
\end{itemize}

It was conjectured by M. Herman in his ICM98-lecture \cite{H}
that in the neighborhood of an analytic KAM-torus, the set of KAM-tori is of positive measure, i.e. KAM stability in a weak sense holds. (i) falls short of proving Herman's conjecture. In the case where we cannot prove that $\cT_0$ is KAM stable we actually show that there exists a subvariety of dimension at least $d+1$ that is foliated by analytic KAM tori with frequency $\o_0$.
The proof of (i) is based on a counter term KAM theorem inspired by Herman. For every value $c\sim 0$ of the action variable there exists a unique frequency $\Omega(c)$ that   cancels the counter term, and if this frequency is Diophantine this yields an invariant KAM torus with frequency $\Omega(c)$. One can show that the jets of the function $\Omega(c)$ are given by those of the gradient of the Birkhoff normal form when the latter is well defined (which is the case if $\o_0$ is Diophantine).
The following alternative then holds : either the BNF is non degenerate and the function $\Omega$ takes Diophantine values on a positive measure set which yields KAM stability (this is (ii)), or the BNF is degenerate and  we can use the analytic dependance of the counter term on the action variable to show the existence of a direction (after a coordinate change in the action variable) that spans a subvariety of invariant KAM tori of frequency $\o_0$.

Point (ii) is a more classical KAM result. Note however that one does not get KAM stability in 
the strong sense since the set of  frequencies under the  R\"ussmann transversality condition does not necessarily have positive measure. The proof of (ii) is obtained from the counter term KAM theorem as explained above and it can be adapted to smooth Hamiltonians. The hypothesis $\o_0$ Diophantine is necessary to guarantee the existence of a BNF.

In (iii) (strong) KAM stability is obtained in the neighborhood of a class of tori that are not necessarily Diophantine. The difficulty is that the BNF may not be defined. This difficulty can be overcome if the Kolmogorov non degeneracy condition is satisfied by $H$ at $\cT_0$, and if the rationally independent frequency $\o_0$ satisfies an arithmetic condition that contains all rationally independent vectors if $d=2$ and all but a meagre set of Hausdorff dimension 0 if $d\geq 3$. The condition is that the uniform Diophantine exponent of $\o_0$ denoted by $\widehat{\o} (\o_0)$ be finite. We recall that in the case of flows, we define $\widehat{\o} (\o_0)$ as the supremum of all real numbers $\gamma$ such that for any sufficiently  large $N$, there exists $k \in \Z^d-\{0\}$ such that $\|k\|\leq N$ and $|(k,\o)|\leq N^{-\gamma}$. We do not know whether invariant tori with frequencies $\o_0$ such that $\widehat{\o} (\o_0)=+\infty$ are KAM stable if the Kolmogorov non degeneracy condition is satisfied. 

The construction of (iv) is based on the successive conjugation method (Anosov-Katok construction \cite{AK}) starting from an "infinitely degenerate twist map" of the form $(\varphi,r) \mapsto (\varphi+f(r),r)$ with the frequency map $f$ such that $f(0)=\o_0$ and $f(r)$ having a fixed Liouville coordinate in small neighborhoods of any  $r$ such that $r_d\neq 0$. The construction only applies to the case $d\geq 4$. In case $d=2$ a smooth version of our  KAM counter term theorem proves the accumulation of a Diophantine torus $\cT_0$ by a positive measure set of Diophantine tori just as in Herman's last geometric theorem any Diophantine KAM circle of a smooth diffeomorphism of the annulus is shown to be accumulated by a positive measure set of KAM circles \cite{FK}.

\section{Statements}\label{sStatements}

\subsection{Analytic KAM tori are never isolated.} Let $H$ be a real analytic  function of the form \eqref{HH}.  \label{analy}

\begin{Main}\label{mPrincipal} If $\omega_0$ is Diophantine, the torus $\bT^d\times\b{0}$ is accumulated by 
analytic KAM tori of $(*)_H$ with Diophantine translation vector.
\end{Main}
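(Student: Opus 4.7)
The plan is to build a counter-term KAM theorem in the spirit of Herman that produces, for each action shift $c$ in a small neighborhood of $0\in \R^d$ and each target frequency $\omega$ in a fixed Diophantine class $DC(\k,\t)$ close to $\omega_0$, an analytic invariant torus of the modified Hamiltonian $H(\f,r)-\lan \alpha(c,\omega),r\ran$ located near $\{r=c\}$ and with internal frequency $\omega$. Here $\alpha(c,\omega)\in \R^d$ is the unique counter-term that absorbs the mean drift introduced at each step of the KAM iteration. A standard quadratic Newton scheme, combined with a Whitney extension in the frequency variable, gives $\alpha$ as Whitney-smooth in $\omega\in DC(\k,\t)$ with $\alpha(0,\omega_0)=0$ and $\p_\omega\alpha(0,\omega_0)=\mathrm{Id}$, and, along any real-analytic curve $c=c(t)$ through $0$, real analytic in $t$.

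By the implicit function theorem, for each small $c$ there is a unique $\Omega(c)$ near $\omega_0$ with $\alpha(c,\Omega(c))=0$. When $\Omega(c)\in DC(\k,\t)$ the modified Hamiltonian is $H$ itself, so we obtain a genuine analytic invariant torus of $(*)_H$ passing near $\{r=c\}$ with Diophantine translation vector $\Omega(c)$. A formal computation, comparing the Newton scheme with the averaging that defines the Birkhoff normal form at $\cT_0$, identifies the Taylor series of $\Omega(c)$ at $0$ with the gradient of the Birkhoff normal form $\cB(r)=\lan\omega_0,r\ran+\cB_2(r)+\cdots$ of $H$ at $\cT_0$; in particular $\Omega(0)=\omega_0$.

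It remains to find many $c$ with $\Omega(c)$ Diophantine, or failing that, many $c$ with $\Omega(c)=\omega_0$. We split on the jet of $\cB$ at $0$. If the Birkhoff normal form satisfies the R\"ussmann transversality condition (some finite jet of $\Omega-\omega_0$ has transverse image), the Pyartli--R\"ussmann lemma yields a positive measure set of $c$ near $0$ such that $\Omega(c)\in DC(\k,\t)$, producing in fact a positive-measure family of KAM tori accumulating $\cT_0$. If instead all Taylor coefficients of $\Omega-\omega_0$ vanish at $0$, pick any line $\ell=\R v$ through $0$ and consider $t\mapsto \alpha(tv,\omega_0)$; this real-analytic function has identically zero Taylor series at $0$, hence vanishes on an open interval, giving a real-analytic $1$-parameter family of invariant tori of $H$ with the Diophantine frequency $\omega_0$ accumulating $\cT_0$.

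The main difficulty lies in setting up the counter-term KAM theorem so that the counter-term is simultaneously Whitney-smooth in the frequency parameter and real analytic along one-dimensional slices in the action parameter, while matching the formal jet of $\Omega$ at $0$ with $\nabla\cB$. The analytic regularity in $c$ is indispensable for the degenerate branch: without it one could not upgrade an infinite-order tangency at $0$ into an open interval of genuine invariant tori with frequency $\omega_0$, and the dichotomy that drives the proof would collapse.
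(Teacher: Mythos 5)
Your proposal follows the paper's route quite faithfully: a counter-term KAM theorem producing $\Lambda(c,\omega)$, the implicitly defined frequency map $\Omega(c)$ whose jet matches $\partial_r N_H$, and a dichotomy driven by analyticity of the counter term in $c$. This is exactly the paper's strategy.

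However, the dichotomy as you state it is not exhaustive, and this is a genuine gap. You split into (a) the R\"ussmann transversality condition holds for some finite jet of $\Omega-\omega_0$, and (b) \emph{all} Taylor coefficients of $\Omega-\omega_0$ vanish at $0$. These are not complementary. The negation of (a) is only that the image of the formal map $c\mapsto\partial_r N_H(c)$ is contained in some hyperplane, i.e., there exists a unit vector $\gamma\perp\omega_0$ with $\lan\partial_r N_H(c),\gamma\ran\equiv 0$; this is the paper's $j$-degeneracy for some $1\le j\le d-1$, and it does \emph{not} imply that the full Taylor series of $\Omega-\omega_0$ vanishes (other components of $\partial_r N_H$ can still be nonconstant). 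Consequently your branch (b), and in particular the clause \lq\lq pick any line $\ell=\R v$\rq\rq, fails: along a generic line the Taylor series of $\alpha(tv,\omega_0)$ does not vanish identically, so you cannot invoke analyticity to get an interval of tori with frequency $\omega_0$.

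The repair is small and is exactly what the paper does. In the degenerate case, one does not pick an arbitrary line: one takes a degenerate direction $\gamma$ (more generally the full degeneracy space $\Lin(\gamma_1,\dots,\gamma_j)$). Since $\lan\partial_r N_H(r),\gamma\ran\equiv 0$, the series $N_H$ is constant in the $\gamma$-direction, so $\partial_r N_H(t\gamma)=\omega_0$ for all $t$; hence the Taylor expansion of $\omega_0+\Lambda(t\gamma,\omega_0)$ at $t=0$ vanishes to infinite order. Only then does the analyticity of $c\mapsto\Lambda(c,\omega_0)$ (valid because $\omega_0\in DC$) upgrade this to an identity on a neighborhood, giving $\Omega(t\gamma)\equiv\omega_0$ and a $j$-parameter family of invariant KAM tori with frequency $\omega_0$. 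With this correction the argument matches the paper's proof of Theorem~\ref{mA} in Section~\ref{sCo-isotropic}, and Theorem~\ref{mPrincipal} follows from combining the non-degenerate branch (your Pyartli--R\"ussmann step, which is the paper's Theorem~\ref{mB}) with this corrected degenerate branch.
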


In fact, we shall prove a more precise result. Let
$N_H$ be the {\it Birkhoff Normal Form} of $H$, that is a uniquely  defined formal power series in the $r$ variable as soon as $\omega_0$ is Diophantine (see Section \ref{s22}). 
We say that $N_H$ is $j$-{\it degenerate} if there exist
$j$ orthonormal vectors $\g_1,\dots,\g_j$ such that for every   $r\sim 0\in\bR^d$
$$\l{\p_rN_H(r),\g_i}=0\quad \forall\ 1\le i\le j,$$
but no $j+1$ orthonormal vectors with this property. Since
$\omega_0\not=0$ clearly $j\le d-1$. 
{ A $0$-degenerate $N_H$ is 
also said to be {\it non-degenerate}.}

\begin{Main}\label{mA}
If  $\o_0$ is Diophantine and $N_H$ is $j$-degenerate,
then there exists an analytic (co-isotropic) subvariety of dimension $d+j$ 
containing $\T^d\times\b{0}$ and foliated by analytic KAM-tori of $(*)_H$
with translation vector $\o_0$.
\end{Main}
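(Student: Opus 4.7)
The plan is to leverage the counter-term KAM theorem that drives Theorem A. As outlined in the introduction, that theorem provides, on a neighborhood $U$ of $0\in\R^d$, an analytic map $c\mapsto \Omega(c)\in \R^d$ with $\Omega(0)=\omega_0$, together with, for each $c\in U$, an analytic embedding $\iota_c\colon \T^d\hookrightarrow \T^d\times\R^d$ whose image $\mathcal T_c$ is a Lagrangian torus invariant under the flow of the modified Hamiltonian $H-\langle\omega_0-\Omega(c),r\rangle$. In particular $\mathcal T_c$ is an invariant torus of $H$ itself, carrying the translation $\omega_0$, exactly when the counter-term $\omega_0-\Omega(c)$ vanishes. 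The proof of Theorem B thus reduces to producing an analytic subvariety $\Sigma\subset U$ of dimension at least $j$ on which $\Omega\equiv \omega_0$.

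The first step is to invoke the jet identification mentioned by the authors just after Theorem A: since $\omega_0$ is Diophantine, the BNF $N_H$ is uniquely defined as a formal power series in $r$, and the Taylor expansion of the analytic function $\Omega$ at the origin coincides term by term with the formal gradient $\partial_r N_H$. This is obtained by comparing the two \emph{a priori} different formal descriptions of an invariant torus through action level $c$ -- one via truncation of the BNF, the other as the asymptotic expansion of the KAM scheme that produces $\Omega$ -- and using the uniqueness of the formal BNF at a Diophantine frequency to force the jets to agree.

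By the $j$-degeneracy hypothesis there exist orthonormal $\gamma_1,\dots,\gamma_j$ with $\langle \partial_r N_H(r),\gamma_i\rangle \equiv 0$ as formal power series. The jet identification then makes each analytic function $\langle\Omega(c)-\omega_0,\gamma_i\rangle$ flat at $c=0$, hence identically zero on $U$ (after possibly shrinking $U$). Consequently $\Omega$ takes values in the $(d-j)$-dimensional affine subspace $\omega_0+V$, where $V=\{\gamma_1,\dots,\gamma_j\}^{\perp}$. The set $\Sigma:=\Omega^{-1}(\omega_0)$ is therefore an analytic subvariety of $U$ containing the origin, cut out by the $d-j$ non-trivial components of $\Omega-\omega_0$; by the standard fiber-dimension bound for analytic maps, every irreducible component of $\Sigma$ through $0$ has dimension at least $d-(d-j)=j$.

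Finally, the union $\mathcal V:=\bigcup_{c\in\Sigma}\mathcal T_c$ is an analytic subvariety of $\T^d\times\R^d$ of dimension $d+j$, containing $\mathcal T_0=\T^d\times\{0\}$ and foliated by the analytic Lagrangian KAM tori $\mathcal T_c$ with common translation vector $\omega_0$. Co-isotropy at smooth points is automatic: denoting the symplectic orthogonal with respect to $d\varphi\wedge dr$ by $(\cdot)^{\sigma}$, the inclusion $T_p\mathcal T_c\subset T_p\mathcal V$ gives
$$(T_p\mathcal V)^{\sigma}\subset (T_p\mathcal T_c)^{\sigma}=T_p\mathcal T_c\subset T_p\mathcal V,$$
where the middle equality is the Lagrangian condition on $\mathcal T_c$. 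The main obstacle I anticipate is the rigorous justification of the jet identification between $\Omega$ and $\partial_r N_H$: it requires tracking the asymptotic expansion of the counter-term through the KAM iteration and matching it with the formal BNF computation order by order. The analytic-geometry dimension bound for $\Sigma$ and the co-isotropy verification are then essentially formal once the jet identification is established.
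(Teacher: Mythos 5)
Your approach breaks down at its very first substantive claim: you assert that the counter-term theorem produces an \emph{analytic} map $c\mapsto\Omega(c)$ on a neighborhood of $0$ in $\R^d$. In fact the construction only yields a $\cC^\infty$ function $\Omega$ (this is precisely the regularity stated in Corollary \ref{bnfIII}), because $\Omega$ is defined implicitly by $\Omega(c)+\Lambda(c,\Omega(c))=0$, and $\Lambda(c,\omega)$ is merely $\cC^\infty$ in the $\omega$-variable -- the cut-off operator $\PP$ that produces the counter-term is built from a $\cC^\infty$ bump, not from an analytic one. This is the whole difficulty of the Diophantine/degenerate case: the jet identification $\Omega(c)=\p_r N_H(c)+\cO^\infty(c)$ together with $j$-degeneracy only gives you that $\langle\Omega(c)-\o_0,\g_i\rangle$ is $\cC^\infty$-\emph{flat} at $c=0$, which, for a merely smooth function, says nothing about its zero set. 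Likewise, without analyticity, the fiber-dimension bound you invoke for $\Sigma=\Omega^{-1}(\omega_0)$ is unavailable -- the preimage of a point under a $\cC^\infty$ map satisfying your flatness constraints can still reduce to $\{0\}$.

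The ingredient you are missing is part (iii) of Proposition \ref{counterthm}: when $\o_0\in DC(2\k,\t)$ the restricted map $(c,\la)\mapsto \Lambda(c,(1+\la)\o_0)$ \emph{is} real holomorphic. The paper therefore does not argue about $\Omega$ at all; it fixes $\omega=\omega_0$ and looks at the analytic (in $c$) function $\Lambda(\cdot,\o_0)$ restricted to $\Lin(\g_1,\dots,\g_j)$. Flatness of $\o_0+\Lambda(\cdot,\o_0)$ at $c=0$ along $\Lin(\g)$ (deduced from $\p_v^n\Omega(0)=0$ via the implicit relation) combined with this genuine analyticity forces $\o_0+\Lambda(\langle s,\g\rangle,\o_0)\equiv 0$, and then uniqueness of the implicit solution gives $\Omega(\langle s,\g\rangle)\equiv\o_0$. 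That produces the explicit $j$-parameter family $c\in\Lin(\g)$ of invariant tori and hence the $(d+j)$-dimensional subvariety $\bigcup_c Z_c(\T^d,c)$. Your framing of the step you were worried about (``rigorous justification of the jet identification'') is not where the gap is -- the jets do match, as Corollary \ref{bnfIII}(iii) shows -- the gap is that you silently upgraded $\Omega$ from $\cC^\infty$ to analytic, and without Proposition \ref{counterthm}(iii) there is no analytic object to which the ``flat $\Rightarrow$ zero'' reasoning can be applied.
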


A stronger result is known when $N_H$ is $(d-1)$-degenerate. Indeed 
R\"ussmann \cite{R} (in a different setting) proved

\begin{Russ}\label{Russmann}
{If  $\o_0$ is Diophantine and} $N_H$ is $(d-1)$-degenerate,
then a full neighborhood of
$\T^d\times\b{0}$ is foliated by analytic KAM-tori of $(*)_H$
with translation vector $\in\R\o_0$.
\end{Russ}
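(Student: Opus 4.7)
The plan is to deduce the theorem from the counter-term KAM theorem already established in the proof of Theorem~A, exploiting both the analytic dependence of the counter-term frequency on the action variable and the rigidity imposed by the maximal degeneracy of $N_H$.

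First I would invoke the counter-term KAM theorem: for every $c$ in a neighborhood $U$ of $0\in\R^d$, it produces an analytic frequency function $c\mapsto\Omega(c)\in\R^d$ with $\Omega(0)=\omega_0$ and, whenever $\Omega(c)$ is Diophantine, an invariant KAM torus $\cT_c$ of $(*)_H$ with translation vector $\Omega(c)$. Moreover the jets of $\Omega$ at $0$ coincide, order by order, with the formal Taylor coefficients of $\partial_r N_H$.

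Second, I would exploit the $(d-1)$-degeneracy. By hypothesis, $\partial_r N_H(r)\in\R\omega_0$ as a formal power series, so every jet at $c=0$ of the analytic vector-valued function
\[
c\mapsto \Omega(c)-\frac{\langle \Omega(c),\omega_0\rangle}{|\omega_0|^2}\,\omega_0
\]
vanishes, and by analyticity this function is identically zero on $U$. Writing $\Omega(c)=\Lambda(c)\omega_0$ with $\Lambda(c):=\langle\Omega(c),\omega_0\rangle/|\omega_0|^2$, one obtains an analytic scalar $\Lambda$ with $\Lambda(0)=1$; shrinking $U$ we may assume $\Lambda(c)>0$, so $\Omega(c)=\Lambda(c)\omega_0$ is Diophantine (with constant $\Lambda(c)\kappa$ and exponent $\tau$) for every $c\in U$. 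The counter-term theorem then produces an analytic family $\{\cT_c\}_{c\in U}$ of invariant KAM tori of $(*)_H$, each with translation vector $\Lambda(c)\omega_0\in\R\omega_0$.

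Finally, one checks that this family foliates a full neighborhood of $\cT_0$. The embedding giving $\cT_c$ is analytic in $c$ and $C^1$-close to $\varphi\mapsto(\varphi,c)$ uniformly for $c$ near $0$, so the map $(c,\varphi)\mapsto \cT_c(\varphi)$ is an analytic diffeomorphism onto a neighborhood of $\T^d\times\{0\}$ by the inverse function theorem. The principal obstacle is the preceding step: upgrading the formal identity $\partial_r N_H\in\R\omega_0$ to a genuine analytic identity for $\Omega$. This is made possible only because the counter-term mechanism delivers $\Omega$ as an honest analytic function on an open set, rather than merely on a Cantor-type set of Diophantine values; once this rigidity is secured, the foliation statement is essentially an application of the implicit function theorem to the family produced by the counter-term construction.
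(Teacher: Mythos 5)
There is a genuine gap. Your argument hinges on the claim that the counter-term construction produces $\Omega(c)$ as an \emph{analytic} function of $c$ on an open neighborhood of $0$. It does not. The counter-term $\Lambda(c,\omega)$ is analytic in $c$ but only $\cC^\infty$ in $\omega$: the cut-off operator $\PP$ used throughout the scheme introduces genuinely non-analytic $\omega$-dependence (it is a Whitney-type flat extension across the Diophantine set). Consequently the implicit equation $\Omega(c)+\Lambda(c,\Omega(c))=0$ yields only a $\cC^\infty$ solution --- this is exactly what Corollary \ref{bnfIII} asserts, namely ``a unique $\cC^\infty$ function $\Omega$,'' not an analytic one. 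For a merely $\cC^\infty$ function, vanishing of all jets at $c=0$ does not force identical vanishing near $0$, so your key step --- concluding that $\Omega(c)-\langle\Omega(c),\omega_0\rangle|\omega_0|^{-2}\omega_0$ vanishes identically from the vanishing of its jets --- is unjustified. Your closing remark correctly identifies this as ``the principal obstacle,'' but then asserts that the counter-term mechanism delivers analytic $\Omega$, which is precisely what it does not do.

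The paper's proof patches exactly this hole using part (iii) of Proposition \ref{counterthm}: when $\omega$ is constrained to the one-dimensional ray $\R\omega_0$, the cut-off is trivial (because $\omega_0\in DC(2\k,\t)$ forces $|\langle n,\omega\rangle|\,|n|^\tau/\kappa$ to stay away from the support of $1-l$ along the ray), so the restricted map $(c,\la)\mapsto\Lambda(c,(1+\la)\omega_0)$ \emph{is} jointly analytic. One also uses the sharper structural fact that $(d-1)$-degeneracy plus the gradient condition force $\p_rN_H(c)=\mu(\langle c,\omega_0\rangle)\,\omega_0$ for a formal power series $\mu$ in \emph{one} scalar variable. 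Restricting the implicit relation to $c=t\omega_0$ gives the one-variable formal identity
\begin{equation*}
\mu(t)\,\omega_0+\Lambda\bigl(t\omega_0,\mu(t)\omega_0\bigr)=0 \quad\text{mod }\OO^{\infty}(t),
\end{equation*}
whose left side is, by the restricted analyticity, an analytic function of the scalars $(t,\mu)$; the analytic implicit function theorem then forces $\mu$ to be a convergent power series, after which the identity $\Omega(c)=\mu(\langle c,\omega_0\rangle)\omega_0$ propagates to a full neighborhood by analyticity in $c$. Only at that point do you get $\Omega(c)\in\R\omega_0$ (hence Diophantine) for \emph{every} small $c$, and the foliation follows from Corollary \ref{kamtori}. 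Without the ray-analyticity of Proposition \ref{counterthm}(iii) and the reduction to a one-variable analytic implicit equation, the passage from the formal identity to the analytic one does not go through.
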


Our proof of Theorem \ref{mA} in Section \ref{sCo-isotropic}  will also yield R\"ussmann's result.  Theorem \ref{mPrincipal} follows from Theorem \ref{mA} in the degenerate case and from a more classical KAM theorem in the non-degenerate theorem that we discuss in the next section.

\subsection{KAM stability under non degeneracy conditions of the BNF}
\ 

{Let $H$ be a real analytic  function of the form \eqref{HH}. 
We say that $H$ has a normal form $N_H$ if there exists a formal power series $N_H$ 
and a formal symplectic mapping  $Z$ of the form
$$Z(\f,r)=(\f+\cO(r), r+\cO^2(r))$$
such that 
$$H \circ Z(\f,r)= N_H^q(r)+\OO^{q+1}(r)\in\CC^{\w}(\T^d\times \b{0}).$$

\begin{Rem} 
This is in particular the case when $\omega_0$ is Diophantine  --  $N_H$ is
the classical  
Birkhoff normal form. Moreover if a normal form  exists and $\om$ is rationally independent, then it is unique. \end{Rem}

Only assuming existence and non-degeneracy of the normal form $N_H$, 
we shall prove the following.

\begin{Main}\label{mB}
If $N_H$ exists, is unique  and  is non-degenerate, then  in any neighborhood of
$\T^d\times\b{0}$ the set of analytic KAM-tori of $(*)_H$ is of
positive Lebesgue measure with density one at the torus $\T^d \times \{0\}$. In particular, if $\o_0$ is Diophantine and if $N_H$ is non degenerate at $\cT_0$, then $\cT_0$ is KAM stable. 

\end{Main}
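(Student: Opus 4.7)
My plan is to combine the counter-term KAM theorem underlying the proof of Theorem~\ref{mA} with a R\"ussmann--Pyartli transversality argument. The counter-term theorem assigns to every small action value $c\in U\subset\R^d$ a unique frequency $\Omega(c)$ that makes the counter-term vanish, and whenever $\Omega(c)\in DC(\kappa,\tau)$ it produces an analytic Lagrangian KAM torus of $H$ with translation vector $\Omega(c)$ lying close to $\T^d\times\{c\}$. The task thus reduces to establishing that $c\mapsto\Omega(c)$ is a smooth map whose image meets $\bigcup_{\kappa>0}DC(\kappa,\tau)$ on a subset of $U$ of Lebesgue density one at $0$.

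The second step is the comparison of $\Omega$ with the normal form. When a unique $N_H$ exists, for every $q$ there is a formal symplectic change of coordinates $Z_q$ conjugating $H$ to $N_H^q(r)+\OO^{q+1}(r)$; applying the counter-term construction to this truncation shows that the Taylor polynomial of $\Omega$ at $0$ of order $q-1$ coincides with that of $\partial_r N_H^q$. Uniqueness of $N_H$ guarantees this identification is unambiguous, so $\Omega(c)$ and $\partial_r N_H(c)$ share the same formal power series at $c=0$. Now the non-degeneracy hypothesis asserts exactly that the image of $r\mapsto\partial_r N_H(r)$ is not contained in any affine hyperplane of $\R^d$; equivalently, some finite jet of $\partial_r N_H$, hence of $\Omega$, satisfies the R\"ussmann transversality condition. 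The classical Pyartli lemma then furnishes, for $\tau$ large enough, an estimate of the form
\[
\Leb\bigl\{c\in B(0,\rho):\Omega(c)\notin DC(\kappa,\tau)\bigr\}\le \eta(\kappa)\,\rho^d,\qquad \eta(\kappa)\to 0\ \text{as}\ \kappa\to 0.
\]
Choosing $\kappa=\kappa(\rho)\to 0$ slowly enough as $\rho\to 0$ yields a family of KAM tori whose union has Lebesgue density one at $\cT_0$. The particular case $\omega_0\in DC$ follows since a BNF then automatically exists, is unique by rational independence, and is assumed non-degenerate, so the general statement applies.

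The principal difficulty is to upgrade the counter-term KAM theorem into a genuine smooth function $\Omega$ defined on a uniform neighborhood of $0$ in $U$, rather than merely a Whitney-smooth map on the Diophantine preimage. This requires solving the counter-term cancellation equation for every $c\sim 0$, not only those whose image lies in $DC(\kappa,\tau)$, and matching its Taylor jets with those of $\partial_r N_H$ to all orders uniformly in $c$. It is precisely the analyticity of $H$ and the formal availability of the (truncated) normal form that make this possible. Once this regularity and jet identification are established, the R\"ussmann--Pyartli measure estimate together with the standard passage from small-divisor counts to measures of invariant tori in phase space completes the argument.
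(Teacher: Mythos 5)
Your proposal follows essentially the same route as the paper: the counter-term KAM theorem (Proposition~\ref{counterthm} and Corollary~\ref{bnfIII}) produces a genuine $C^\infty$ frequency map $c\mapsto\Omega(c)$ on a full neighborhood of $0$ whose jets agree with $\partial_r N_H$, and Lemma~\ref{transversality} together with the Pyartli-type measure estimate of Lemma~\ref{pyartly} then gives $\Leb\{|c|<\eta:\Omega(c)\notin DC(\kappa,\tau)\}\lesssim\eta\cdot\Leb\{|c|<\eta\}$ after fixing $q$, $\tau$ and $\eta$ as suitable functions of $p$, $\sigma$, $\kappa$ (with $\kappa\to 0$), which yields density one and hence the theorem via Corollary~\ref{kamtori}. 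The only minor caveat is that the paper's notion of non-degeneracy is $\langle\partial_r N_H(r),\gamma\rangle\not\equiv 0$ for every unit $\gamma$ rather than ``image not in an affine hyperplane'' --- a slightly weaker hypothesis, though the transversality argument is the same compactness argument either way.
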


The condition that $N_H$ is non-degenerate is essentially equivalent to R\"ussmann's non-degeneracy
condition (see \cite{Russman2,You}). It is here shown to be sufficient in this singular perturbation
situation. We recall that the result does not state strong KAM stability since the 
 the frequencies of the KAM tori do not necessarily cover a set of positive measure.  Point (ii) of our introduction corresponds to the second statement of Theorem \ref{mB}.

Hence, the conjecture of M. Herman has an affirmative answer
when $N_H$ is { non-degenerate} (theorem \ref{mB}) or $(d-1)$-degenerate 
(R\"ussmann's theorem). Our theorems do not provide an answer 
to the conjecture in the intermediate cases.

\medskip

{ \subsection{KAM stability in the absence of BNF : Liouville torus with 
{ non-degeneracy of Kolmogorov type}} \label{liouv}

{Let $H$ be a real analytic  function of the form \eqref{HH}. 
and let 
$$M_0=\int_{\T^d} \frac{\partial^2 H }{\partial r^2 } (\varphi,0) d \varphi.$$ 
We recall the notation $\widehat{\o} (\o_0)$ as the supremum of all real numbers $\gamma$ such that for any sufficiently  large $N$
$$\min_{0<|k|\leq N}|\l{k,\o_0}|\leq N^{-\gamma}.$$
}
\begin{Main} \label{liouville.twist} If $\widehat{\o} (\o_0)<+\infty$  and if 
{
$M_0$ is non-singular}  then in any neighborhood of
$\T^d\times\b{0}$ the set of analytic KAM-tori of $(*)_H$ is of
positive Lebesgue measure with density one at the torus $\T^d \times \{0\}$. Moreover, 
the set of frequencies of the KAM tori has positive Lebesgue measure in $\R^d$.
\end{Main}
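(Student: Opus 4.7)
Since $\omega_0$ is not assumed Diophantine, the Birkhoff normal form invoked in Theorems~\ref{mA} and~\ref{mB} may fail to exist, and the strategy of proof is instead a Kolmogorov-style argument parameterized by the action $c$, combined with the counter-term KAM theorem that underlies Theorem~\ref{mPrincipal}.

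First I would set up the Kolmogorov frequency map. Since $M_0$ is non-singular, the mean frequency
\[
\Omega(c) := \omega_0 + M_0 c + O(|c|^2)
\]
is a local diffeomorphism from a neighborhood of $0$ onto a neighborhood of $\omega_0$ in $\R^d$. The problem then reduces to producing, for a set of actions $c$ of density one at $c=0$, an analytic invariant torus of $H$ through action $c$ with frequency $\Omega(c)$.

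Second, the construction of the torus at a given admissible action $c$ would proceed in the spirit of the counter-term KAM theorem recalled in the introduction. Translating $r \mapsto c + r$ and splitting $H(\cdot, c + \cdot)$ into its mean in $\varphi$ and a mean-zero perturbation produces a Kolmogorov-type normal form at frequency $\Omega(c)$, to which one can apply a classical KAM iteration whenever $\Omega(c)$ is Diophantine with constants matched to the perturbation size. Equivalently, the counter-term theorem yields for each Diophantine frequency $\omega$ near $\omega_0$ a unique action at which the counter term vanishes; non-singularity of $M_0$ together with the implicit function theorem identifies this action as $\Omega^{-1}(\omega)$, and produces the desired KAM-torus of $H$.

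Third I would quantify which actions are admissible. The arithmetic hypothesis $\widehat{\omega}(\omega_0) < \infty$ furnishes, for every $\gamma > \widehat{\omega}(\omega_0)$, an infinite sequence of scales $N_j \to \infty$ along which $\min_{0 < |k| \leq N_j} |\langle k, \omega_0\rangle| \geq N_j^{-\gamma}$. This uniform polynomial control on small denominators of $\omega_0$ transfers to the estimate that the set of $\omega \in B(\omega_0, \delta)$ satisfying a Diophantine condition $|\langle k, \omega\rangle| \geq \kappa(\delta)/|k|^{\tau}$, with $\kappa(\delta)$ a suitable positive power of $\delta$ matched to the KAM convergence threshold, occupies a $1 - o(1)$ fraction of $B(\omega_0, \delta)$ as $\delta \to 0$. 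Pulling back via the diffeomorphism $\Omega^{-1}$ yields a density-one set of admissible actions $c$ near $0$ and, simultaneously, a positive-measure set of frequencies in $\R^d$. The main obstacle is the balancing of two competing scales: the Diophantine constants of target frequencies $\omega = \Omega(c)$ near the Liouville point $\omega_0$, which may deteriorate as $|c| \to 0$ because of the small denominators of $\omega_0$ itself, and the KAM smallness threshold, which scales like a positive power of the perturbation $|c|$. The finiteness of $\widehat{\omega}(\omega_0)$ is precisely what allows these two scales to be reconciled on a positive-measure set of $c$; carrying out this quantitative matching uniformly along the KAM iteration, without sacrificing the density-one conclusion, is the technical heart of the argument.
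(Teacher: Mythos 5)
Your outline gets the global architecture right: a frequency map $c\mapsto\Omega(c)$, invertibility via $M_0$ non-singular, the counter-term theorem plus the implicit function theorem to solve $\Omega(c)+\Lambda(c,\Omega(c))=0$, and a measure estimate on Diophantine frequencies pulled back by $\Omega^{-1}$. This is indeed the skeleton of the paper's Section~\ref{sLiouville}.

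However, there is a real gap at the step where you "split $H(\cdot, c+\cdot)$ into its mean and a mean-zero perturbation" and then apply a classical KAM iteration. The angle-dependent part of $H$ after translation by $c$ is only $\OO(|c|)$, whereas the KAM smallness threshold in Proposition~\ref{theo.kam} is a high power of $\kappa\cdot|c|$ (of the form $\kappa^{11}|c|^{10(\t+d)+11}$). An $\OO(|c|)$ perturbation is far too large. In the Diophantine case this is fixed by first conjugating to Birkhoff normal form up to a high order $q$, making the effective perturbation $\OO^{q+1}(r)$; but here $\o_0$ is Liouville and the BNF simply does not exist, so the expansion $\Omega(c)=\o_0+M_0c+O(|c|^2)$ you start from is not a priori meaningful. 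You flag the scale-balancing problem at the end as "the technical heart of the argument," but you do not supply the idea that resolves it.

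The missing device is the truncation-plus-finite-BNF construction of Lemma~\ref{l91}. One uses the hypothesis $\widehat{\o}(\o_0)<\i$ to extract scales $Q_n\to\i$ with $\min_{0<|k|\le Q_n}|\l{k,\o_0}|\ge Q_n^{-\g}$, truncates the Fourier series of $H$ at frequency $Q_n/q$ (so the truncation error is $\lesssim e^{-cQ_n}$), and then performs a \emph{finite} Birkhoff reduction of the truncated Hamiltonian up to a fixed order $q$. This is legitimate because all the small denominators that appear involve only modes $|k|\le Q_n$ and are controlled by $Q_n^{-\g}$. Working on a shrinking action domain $\de_n= Q_n^{-\g q^2}$ makes both the $\OO^{q+1}(r)$ tail and the exponential truncation error small enough to meet the threshold \eqref{e510} of Proposition~\ref{theo.kam} with $\kappa_n=\de_n^2$. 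Only after this preparation can the counter-term theorem, the implicit function theorem (using $\partial_c\Psi\approx M_0$, which comes from estimate \eqref{nondeg}) and the Diophantine density count be run as you propose. Without the truncated normal form your argument would not close, since neither the approximate frequency map nor the required perturbation smallness would be available.
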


Since for any rationally independent vector $\o_0 \in \R^2$ we have that  $\widehat{\o} (\o_0)=1$ we see that KAM stability holds at $\cT_0$ without any other arithmetic condition { when $d=2$}. 
{This is a precise formulation of (iii).}

\begin{Rem} We could relax the condition $\widehat{\o} (\o_0)<+\infty$ to  the  existence of sequences $Q_n\to \infty$ and $\eps_n \to 0$  such that $|(k,\omega_0)|\geq e^{-\eps_n Q_n}$ for any $k \in \Z^d, 0<|k|\leq Q_n$. 
\end{Rem}

\medskip

\subsection{Smooth counterexamples to KAM stability.}

In the $\cC^{\i}$-cate\-gory the situation is different from that of Theorem \ref{mPrincipal}. 
For $d=2$, we show in section \ref{secd2} that the same 1-dimensional phenomenon of the frequency map pointed out by Herman (see  \cite{FK} for the discrete case) gives a set  of positive    
measure of $\cC^\i$ KAM-tori in any neighborhood of $\T^2\times\b{0}$.  
For $d=3$, we have no results, but for $d\ge4$ we shall prove

\begin{Main}\label{mC}
Let $d\ge 4$. For any $\eps>0, s \in \N$ there exists a function $h$ in $C^\infty(\T^4\times\R^4)$, satisfying  
$h(\f,r)=\cO^\infty (r_4)$ and
{   
$${\|h\|}_{\cC^s(\T^4\times\R^4)}<\eps,$$ 
}
such that the flow $\Phi_H^t$ of $H(\f,r)=(\omega_0,r)+h(\f,r)$ satisfies 
$$ \limsup_{t\to\pm\i} \|\Phi^t_H(\f,r) \| = \infty$$
for any $(\f,r)$ satisfying $r_4\neq 0$. 
\end{Main}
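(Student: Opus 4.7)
\textbf{Proof plan for Theorem~\ref{mC}.}

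The strategy is the Anosov--Katok successive conjugation scheme, adapted to the Hamiltonian setting so as to produce unbounded orbits away from $\cT_0 = \T^4 \times \{0\}$ while maintaining a $C^\infty$ perturbation that vanishes to infinite order at $\{r_4 = 0\}$. The plan is to build $H$ as a $C^\infty$ limit of the form $H = H_0 + \sum_n (H_n - H_{n-1})$, where $H_0$ is a suitably chosen integrable base and each $H_n - H_{n-1}$ is the difference introduced by a symplectic conjugation supported in a small box. The base is
\[
H_0(\varphi, r) = \langle \omega_0, r \rangle + N(r),
\]
with $N \in C^\infty(\R^4)$ chosen so that $N(r) = \cO^\infty(r_4)$ at $\{r_4 = 0\}$ and the frequency map $f(r) = \omega_0 + \nabla N(r)$ is, on each of a family of boxes $B_n$ accumulating densely on $\{r_4 \neq 0\}$, essentially constant and equal to a vector $\alpha_n \in \R^4$ whose $i_n$--th coordinate admits extremely good rational approximations $p_n/q_n$ with $q_n \to \infty$.

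On each box $B_n$, the $H_0$--flow is essentially periodic in the direction $\partial_{\varphi_{i_n}}$ with period $\sim q_n$, so that one can construct a symplectic map $\psi_n = \exp(X_{S_n})$ generated by a small Hamiltonian $S_n$ supported in $B_n$ and approximately $H_0$--invariant (so that $\{S_n, H_0\}$ is controllably small). The map $\psi_n$ is tailored to produce a definite drift in the action variable, of amplitude comparable to the diameter $\rho_n$ of $B_n$, in a direction transverse to $\partial_{\varphi_{i_n}}$. The Anosov--Katok trade-off is that the huge period $q_n$ permits $\|S_n\|_{C^s} \leq 2^{-n-s}$ for every $s$ while still producing a drift of size $\rho_n$; this is where the Liouville property of $\alpha_n$ is used crucially to reconcile smallness of $S_n$ with a definite displacement.

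Set $H_n := (\psi_n)^* H_{n-1}$ and $H := \lim_n H_n$. The sequence converges in $C^\infty$ because each $\psi_n$ is $C^\infty$--close to the identity and $\sum_n \|H_n - H_{n-1}\|_{C^s} < \infty$ for every $s$; the infinite-order vanishing $h = H - \langle \omega_0, \cdot \rangle = \cO^\infty(r_4)$ is achieved by placing each $\psi_n$ in $\{|r_4| \geq \delta_n\}$ with $\delta_n \downarrow 0$ slowly enough that $\|\psi_n - \mathrm{id}\|_{C^s} = o(\delta_n^n)$. Although the compositions $\Psi_n := \psi_1 \circ \cdots \circ \psi_n$ themselves do not converge, the successive conjugations do drive individual orbits: a trajectory of $H$ passing through $B_n$ is pushed by the drift of $\psi_n$ out of $B_n$ into an adjacent box $B_m$ with $m > n$, where the next drift takes over. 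By arranging the boxes in an infinite chain covering $\{r_4 \neq 0\}$, every orbit with $r_4 \neq 0$ visits infinitely many $B_n$ with cumulatively growing $\|r\|$--component, yielding $\limsup_{t \to \pm \infty} \|\Phi_H^t(\varphi, r)\| = \infty$.

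The main obstacle, and the source of the hypothesis $d \geq 4$, is the geometric organization of the box chain. The drift direction of each $\psi_n$ must be consistently oriented along the chain, the boxes must cover $\{r_4 \neq 0\}$ densely enough that every orbit meets infinitely many of them, and the resonance vector $k_n$, the drift direction, and the continuity direction along the chain must be linearly independent within each $B_n$. In four action dimensions there are three axes in the hyperplane $\{r_4 = \mathrm{const}\}$ transverse to $r_4$, which is precisely enough room to accommodate these three requirements orthogonally. With $d = 2$ this geometric freedom collapses, in accordance with the KAM stability provided by Theorem~\ref{liouville.twist}; the $d = 3$ case lies at the borderline and is left open. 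Making the quantitative parameters $(\rho_n, q_n, \delta_n)$ work out simultaneously to give $C^\infty$--convergence, infinite-order vanishing at $\{r_4 = 0\}$, and effective drift is the most delicate part of the construction, and requires a Gevrey--type cutoff that exploits the largeness of $q_n$.
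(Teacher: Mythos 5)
Your overall orientation—an Anosov--Katok successive-conjugation construction, a base Hamiltonian whose frequency map is Liouville in small pieces away from $\{r_4=0\}$, conjugations generated by near-resonant trigonometric polynomials, flatness at $\{r_4=0\}$ obtained by placing all perturbations away from the slice—is in the right spirit. However, there are two places where your plan diverges from what actually works, and the second is a genuine gap.

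First, the geometric mechanism. In the paper the Hamiltonians are all taken in the class $\cH$ of functions independent of $\varphi_4$, so $r_4$ is a conserved quantity; consequently an orbit with $r_4\neq0$ stays forever in a single slab $\widehat I_n=\T^4\times\R^3\times I_n$. Unboundedness is produced \emph{inside} that slab: the conjugation by the generating function $a(r_4)\sin(2\pi(q_1\psi_1+q_2\psi_2))$ (with $(q_1,q_2)$ a near resonance for the Liouville pair $(\bar F_1,\bar F_2)$) turns a flat invariant torus $\{r\}\times\T^4$ into a ``wrinkled'' torus whose projection to the actions $(r_{i_1},r_{i_2})$ has diameter $\sim|q_1|+|q_2|$, and the induced minimal translation flow on the torus then forces the orbit to sweep that full diameter. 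Your picture of an orbit being pushed out of $B_n$ into a higher box $B_m$ is incompatible with this: there is no hopping between slabs, and if you instead mean bounded boxes within a fixed $r_4$-slice you would need to control the orbit's itinerary through an infinite chain, which is a much harder and essentially different problem. Note also that what makes $d\ge4$ necessary is not a count of ``resonance/drift/continuity'' directions, but the need, for each interval $I_n$, of a \emph{pair} of action coordinates forming a Liouville vector while a third stays free to vary so that the three functions $f_1,f_2,f_3$ can be patched into smooth (flat at $0$) functions of $r_4$; the overlap structure of the intervals forces cycling through all three pairs, and this requires three transverse action directions.

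Second, and more seriously, the passage from the finite-stage construction to the limit is not addressed. You note that the compositions $\Psi_n$ need not converge and then assert that ``the successive conjugations do drive individual orbits.'' This is exactly the point that needs an argument: once $H=\lim_n H_n$ is fixed, its orbits are what they are, and the heuristic of applying $\psi_n$ one step at a time to an orbit of $H$ has no meaning. In a direct limit construction one would have to show that the drift produced at stage $n$ survives all subsequent (and increasingly rapidly oscillating) conjugations, which requires delicate quantitative bookkeeping. The paper avoids this entirely with a Baire-category argument \emph{\`a la} Herman: one defines the closure $\overline{\cH_0}$ in the $C^\infty$ topology of the class of Hamiltonians $H_0\circ U$, $U\in\cU_0$, and shows that, for each $(n,\Delta,A,T,i_1,i_2)$, the set of $H\in\overline{\cH_0}$ all of whose orbits starting in the compact set $\widehat{I'_n}\cap\{\|p\|\le\Delta\}$ oscillate beyond $A$ in the actions $r_{i_1},r_{i_2}$ both forward and backward within time $T$ is \emph{open} (a finite-time condition over a compact set) and \emph{dense} (Proposition~\ref{main.liouville}). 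The desired diffusion set $\cD$ is then a countable intersection, hence a dense $G_\delta$ and in particular nonempty, and the initial $C^s$-smallness of $h$ comes for free from choosing the $\eps$ in Proposition~\ref{colimacon} small. Your proposal would need either to reproduce this openness-plus-density argument or to supply the missing orbit-tracking estimates in the direct construction; as written neither is done, and the convergence of $H_n$ alone does not imply the diffusion property of $H$.

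Finally, a smaller point: the Liouville requirement should be stated on a \emph{pair} of coordinates of the frequency (i.e.\ $|q_1\bar F_1+q_2\bar F_2|$ extremely small for some $(q_1,q_2)\in\Z^2$ with $|q_i|$ large), not merely on a single coordinate; it is precisely this two-dimensional near-resonance that makes the generating function $a(r_4)\sin(2\pi(q_1\psi_1+q_2\psi_2))$ yield a $C^s$-small perturbation of $H_0$ while still producing a drift of size $\sim|q_1|,|q_2|$ in both $r_1$ and $r_2$.
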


 \begin{Rem} We will see in Section \ref{sSmooth} that the construction of Theorem \ref{mC} can actually be carried out in any Gevrey class $G^\sigma$ with $\sigma>1$.  
 \end{Rem}
 
Notice that in the examples of Theorem \ref{mC}  the hyperplane $r_4=0$ is foliated by KAM tori with translation 
vector $\omega_0$, so the torus $\T^d\times\b{0}$ is not isolated. Theorem \ref{mC}  
gives however counter-examples for $d \geq 4$ to the positive measure accumulation by KAM-tori. Indeed, each point that lies outside this hyperplane diffuses to infinity 
along a sequence  of time. As we shall see in Proposition 4.3, its positive and negative  semi-orbits actually oscillate between $-\infty$ and $+\infty$ in projection to at least two action coordinates. 

It would be interesting to construct smooth examples with an isolated KAM-torus, thus showing that the phenomenon of Theorem \ref{mPrincipal} is purely analytic. On the other hand if Herman's conjecture is correct, 
then the phenomenon of Theorem \ref{mC} cannot be carried to the analytic setting. 
 
It is worth noting that Herman did also announce in \cite{H} the existence of counter-examples in the $C^\infty$ category 
to the positive measure conjecture, provided $d\geq 4$. However, he did not provide any clue to these examples and we are not aware whether the examples he had in mind had any invariant tori accumulating the KAM-torus. 

\medskip

\subsection{Plan of the paper}

The paper is organized in the following way. In section \ref{sBNF}
 we discuss the Birkhoff normal form and we give
a different (from the usual) characterization of it. In section \ref{s3} we formulate a KAM counter term theorem
which we use to give still another characterization of the Birkhoff normal form. Using this result we derive
Theorem \ref{mA} and \ref{mB} and R\"ussmann's theorem in sections \ref{sKAMstability} and  \ref{sCo-isotropic}. In section 
\ref{sSmooth} we prove Theorem \ref{mC}, 
and in section \ref{sProof} we give a proof of the KAM counter term theorem used in section \ref{s3}.

\bigskip

\subsection{Notations}

We denote by $\bD_{\delta}^d$ the polydisk in $\bC^d$ with center $0$ and radius $\de$. More generally if $d=(d_1,\dots,d_n)$ and 
$\de=(\de_1,\dots,\de_n)$, then
$$\bD^d_\de=\bD^{d_1}_{\de_1}\times\dots\times  \bD^{d_n}_{\de_n}.$$
Let $\T^d_{\rho}$ be the complex neighbourhood of width $\rho$ of
of $\bT^d$:
$$(\b{z\in\bC: |\Im z|<\rho}/\Z)^d.$$

A holomorphich function $f:\T^d_\rho\times \bD^e_\de\to\bC$ is {\it real} 
if it gives real values to real arguments. We denote by
$$\cC^{\o}(\bT^d_\rho\times\bD^e_\de)$$ 
the space of such real holomorphic functions which we provide with the norm
$$|f|_{\rho,\de}=\sup_{(\f,z)\in\bT^d_\rho\times\bD^e_\de}|f(\f,z)|.$$
We let
$$\cC^{\o}(\bT^d\times\b{0})=\bigcup_{\rho,\de}\cC^{\o}(\bT^d_\rho\times\bD^e_\de).$$

We denote by $\p_\f^\al f$ and $\p_z^\al f$  the partial derivatives
of $f$ with respect to $\f$ and $z$ respectively, with the usual 
multi-index notations. If $z=(z',z'')$ we say that
$$f\in \OO^j(z')$$
if and only if $\p_{z'}^{\al'} f(\f,0,z'')=0$ for all $|\al'|<j$.
We denote by $\p_\f f$ and $\p_z f$ the {\it gradient} of 
$f$ with respect to $\f$ and $z$, respectively, and by $\p_\f^2 f$ and 
$\p_z^2 f$ the corresponding {\it Hessian}.

For a function $f\in \cC^{\w}(\bT^d_\rho\times\b{0})$, 
$\cM(f)$ is the {\it mean value}
$$\int_{\bT^d}f(\f,z)d\f.$$

We shall also use the same notations for $\bC^n$-valued functions
$f=(f_1,\dots, f_n)$ with the absolute value replaced
by $|f|=\max_i |f_i|$ (or some other norm on $\C^n$).

\medskip

{\it Formal power series.} Let $z=(z_1,\dots,z_n)$. An element 
$$f\in \cC^{\o}(\bT^d_\rho)[[z]]$$
is a formal power series
$$f=f(\f,z)=\sum_{\al\in \bN^n} a_\al(\f) z^\al$$
whose coefficients $a_\al\in \cC^{\o}(\bT^d_\rho)$ 
(possibly vector valued). We denote by
$$[f]_j(\f,z)= \sum_{|\al|=j} a_\al(\f) z^\al,$$
the homogenous component of degre $j$, and
$$[f]^j=\sum_{i\le j}[f]_i.$$
The partial derivatives $\p_\f^\al f$ and $\p_z^\al f$
are well-defined  and if $z=(z',z'')$ we define that
$f\in \OO^j(z')$ in the same way as for functions.
The mean value $\cM(f)$ is the power series obtained by taking
the mean values of the coefficients. 

\medskip

{\it Parameters.} Let $B$ be an open subset of some euclidean
space. Define 
$$\cC^{\w,\i}( \T^d_\rho \times \bD^e_{\de},B)$$
 to be the set of $\cC^{\i}$ functions (possibly vector valued)
$$f:\T^d_\rho \times \bD^e_{\de}\times B\ni(\f,z,\o)\mapsto f(\f,z,\o)$$
such that for all $\o\in B$
\footnote{\ we appologize for the double use of $\o$}
$$f_\o:\T^d_\rho \times \bD^e_{\de}\ni (\f,z)\mapsto f(\f,z,\o)$$
is real holomorphic. We define
$$||f||_{\rho,\de,s}=\sup_{|\al|\le s}|\p^\al_\o f_\o|_{\rho,\de}.$$

\section{The Birkhoff Normal Form (BNF)}\label{sBNF}

Let
$$H(\f,r)=\l{\o_0,r}+\OO^2(r)\ \in\ \cC^{\w}(\bT^d_\rho\times\bD^d_\de)$$
and
$$\o_0\in DC(\k_0,\t_0).$$

\subsection{The Birkhoff normal form (BNF)}\label{s22}
Let us recall a well-known result.

\begin{Prop*}\label{bnfI}
There exist  
$$
\left\{\begin{array}{l}
f(\f,r)\in \cC^{\w}(\T^d_\rho)[[r]]\cap \OO^2(r)\\
N(r)\in \R[[r]]
\end{array}\right.$$ 
such that
$$H(\psi,r+\p_\psi f(\psi,r))=N(r).$$
Moreover, $N(r)$ is unique and $f$ is uniquely determined by fixing arbitrarily
the mean value $\cM(f)$.
\end{Prop*}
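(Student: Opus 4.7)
The plan is to solve the equation $H(\psi, r+\p_\psi f(\psi,r))=N(r)$ order by order in $r$, treating $f$ and $N$ as formal power series whose homogeneous components $f_j$ and $N_j$ of degree $j\geq 2$ in $r$ are determined inductively from the given data.

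First I would decompose
$$H(\psi,r)=\l{\o_0,r}+\sum_{j\geq 2}H_j(\psi,r),\qquad f=\sum_{j\geq 2}f_j,\qquad N=\l{\o_0,r}+\sum_{j\geq 2}N_j,$$
substitute into $H(\psi, r+\p_\psi f(\psi,r))$, and Taylor expand each $H_k(\psi,\cdot)$ around $r$ in the increment $\p_\psi f(\psi,r)$. Collecting terms homogeneous of degree $j$, the equation at order $j$ takes the form
$$\l{\o_0,\p_\psi f_j(\psi,r)}=N_j(r)-H_j(\psi,r)-R_j(\psi,r),$$
where $R_j$ is a polynomial expression in $f_2,\ldots,f_{j-1}$ and their $\psi$-derivatives, with coefficients involving $\p_r^\alpha H_k$ for $2\leq k<j$. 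Taking the mean in $\psi$ forces $N_j(r)=\cM_\psi(H_j+R_j)$, and the remaining zero-mean cohomological equation is solved in Fourier,
$$\hat f_{j,k}(r)=\frac{-\widehat{(H_j+R_j)}_k(r)}{2\pi i\l{k,\o_0}},\quad k\in\Z^d\setminus\{0\},$$
which determines $f_j$ uniquely modulo its zero mode $\cM(f_j)(r)$.

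The Diophantine condition $\o_0\in DC(\k_0,\t_0)$ yields $|\l{k,\o_0}|^{-1}\leq\k_0^{-1}|k|^{\t_0}$, so the polynomial loss in $|k|$ is absorbed by any strictly smaller analyticity width: one obtains $f_j\in\cC^\omega(\bT^d_{\rho'})$ for every $\rho'<\rho$, hence $f_j\in\cC^\omega(\bT^d_\rho)$ since $\bT^d_\rho$ is open. Uniqueness of $N$, and of $f$ once $\cM(f)$ is fixed, follows immediately from the order-by-order uniqueness of $N_j$ and of the nonzero Fourier modes of $f_j$.

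The main bookkeeping point requiring care is verifying that $R_j$ depends only on $f_k$ with $k<j$, so that the induction is well posed. This is transparent from a degree count: any term in the Taylor expansion of $H_k(\psi, r+\p_\psi f)$ that contributes at total degree $j$ in $r$ involves only $f_\ell$'s whose degrees sum to at most $j-k\leq j-2<j$. No genuine analytic obstacle arises, since the construction is purely formal in $r$ and no convergence is asserted; the Diophantine hypothesis enters only to make each finite-order cohomological equation solvable with an analytic solution on $\bT^d_\rho$.
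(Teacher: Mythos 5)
The paper states this proposition as a ``well-known result'' with no proof of its own, so there is nothing to compare against line by line; your order-by-order inductive construction (split off the homogeneous component $N_j = \cM_\psi(H_j + R_j)$, solve the remaining zero-mean cohomological equation in Fourier) is the standard argument and is exactly the method the paper deploys for the parametric generalization in Proposition~\ref{bnfII}. There is, however, one arithmetic slip in your degree bookkeeping: a term of the Taylor expansion of $H_k(\psi, r+\p_\psi f)$ with multi-index $\alpha$ has $r$-degree $(k-|\alpha|) + \sum_i \ell_i$, so the degrees of the $f_{\ell_i}$ factors contributing at total degree $j$ satisfy $\sum_i \ell_i = j - k + |\alpha|$, \emph{not} $\sum_i \ell_i \le j - k$ as you wrote. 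The conclusion you need still holds: since $\ell_i \ge 2$, each individual $\ell_i \le (j-k+|\alpha|) - 2(|\alpha|-1) = j-k-|\alpha|+2 \le j-1$ (using $k \ge 2$, $|\alpha| \ge 1$), so $R_j$ genuinely involves only $f_\ell$ with $\ell < j$ and the induction is well posed. One small point worth making explicit for the uniqueness of $N$: the reason $N_j$ does not depend on the arbitrary choices of $\cM(f_\ell)$ at lower orders is that $R_j$ only involves $\p_\psi f_\ell$, which is blind to the mean value $\cM(f_\ell)$; your phrasing gestures at this but does not say it.
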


\begin{Rem} The unique series $N$ is the Birkhoff normal form
of $H$, denoted $N_H$. It is clear that
$$N_H(r)=\l{\o_0,r}+\OO^2(r).$$
We say that the unique $f$ for which $\MM(f)=0$ is the {\it generating function}
of the BNF, denoted $f_H$.
\end{Rem}

We know that the generating function $f_H$ is convergent if, and only if,  
$H$ is integrable \cite{I} (see also \cite{V,N}). It was known to Poincar\'e that 
for ``typical'' (in a sense we would call today generic) $H$, $f_H$ will be divergent. (Siegel 
\cite{S55} proved the same thing
in a neigbourhood of an elliptic equilibrium with another, and stronger,
notion of ``typical''.)

However, essentially nothing is known about the BNF itself when $\o_0$ is Diophantine. 
For example, the answers to the following questions are not known:
\begin{itemize}
\item[(i)] can $N_H$ be divergent?
\item[(ii)] if  $H$ is non integrable, can $N_H$ be convergent?
\end{itemize}
We only have a result of Perez-Marco \cite{P-M} saying that if 
the BNF $N_H$ is divergent for some $H$, then $N_H$ is divergent for 
``typical'' (i.e. except for a pluri-polar set) $H$. More generally, nothing 
is known about the set of all BNF's
$$\cB(\o_0)=\b{N_H: H(\f,r)=\l{\o_0,r}+\OO^2(r)}.
\footnote{\ apart from the fact that $N_H$ has some Gevrey-growth \cite{S05} and that $\cB(\o_0)$
contains all convergent series}
$$
Is it a ``large'' set or a ``small'' set in the space of all power series?
It has been shown \cite{B} that if $N_H$ fulfills a certain condition
$\cG$, which is prevalent in the space of power series, then
the invariant torus $\T^d\times\b{0}$ is {\it {   doubly} exponentially stable}.
However, it is not known whether $N_H$ can belong to 
$\cG$ when $H$ is non-integrable.

\subsection{Exact symplectic mappings and generating functions}\label{s23}

Consider the equations
\be\label{2.2}\left\{\begin{array}{l}
\f=\psi+p(\psi,r)\\
s=r+q(\psi,r)
\end{array}\right.\ee
with 
$$p,q\in \cC^{\w}(\bT^d\times \b{0 })$$
and 
$$\det(I+ \p_\psi p(\psi,r))\not=0$$
for all $(\psi,r)\in \bT^d\times \b{r\sim0 }$.

These equations can be solved  uniquely for
$(\psi,s)$ as
\be\label{2.3}\left\{\begin{array}{l} 
\psi=\f+\Phi(\f,r)\\
s= r+ R(\f,r)
\end{array}\right.\ee
with
$$\Phi,R\in \cC^{\w}(\bT^d\times \b{0 })$$
and
$$\det(I+ \p_\f\Phi(\f,r))\not=0$$
for all $(\f,r)\in \bT^d\times \b{r\sim 0 }$. Conversely,
the equations (\ref{2.3}), under the two supplementary conditions on $\Phi,R$,
can be solved uniquely for $(\f,s)$ as (\ref{2.2}), with the two
supplementary conditions on $p,q$.

\begin{Rem}
It is easy to verify that
$$p\in\OO(r)\quad{\textrm and}\quad q\in\OO^2(r)$$
if and only if
$$\Phi\in\OO(r)\quad{\textrm and}\quad R\in\OO^2(r).$$
\end{Rem}

The mapping
$$Z:(\f,r)\mapsto (\psi,s)$$
is a real analytic local diffeomeorphism
on $ \bT^d\times \b{r\sim 0 }$. It is 
{\it symplectic} if and only if the one-form
$$Z^*(rd\f)-(rd\f)$$ 
is closed, and it is {\it exact} if and only if this one-form is
exact.

\begin{Prop}\label{sympl-exact}
$Z$ is symplectic if and only if the one-form
$pdr+qd\psi$ is closed. $Z$ is exact if and only if the one-form
$pdr+qd\psi$ is exact 

If 
$$\Phi\in\OO(r)\quad{\textrm and}\quad R\in\OO^2(r),$$
then $Z$ is exact if and only if it is symplectic.
\end{Prop}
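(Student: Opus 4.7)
The plan is to reduce the problem to a single short calculation carried out in the mixed coordinates $(\psi,r)$, which simultaneously chart source and target via~(\ref{2.2}). I would first substitute $\f=\psi+p(\psi,r)$ and $s=r+q(\psi,r)$ to express both Liouville forms in these coordinates,
\begin{align*}
Z^*(r\,d\f) &= (r+q)\,d\psi = r\,d\psi+q\,d\psi,\\
r\,d\f &= r\,(d\psi+dp) = r\,d\psi+r\,dp,
\end{align*}
where I slightly abuse the paper's notation by writing $(s,\psi)$ for the target coordinates. Using the Leibniz identity $r\,dp = d(r\cdot p)-p\,dr$, this gives
$$Z^*(r\,d\f)-r\,d\f \;=\; (p\,dr+q\,d\psi) - d(r\cdot p).$$
Since $d(r\cdot p)$ is globally exact, it is irrelevant both for closedness and for exactness, and the first two equivalences drop out at once: $Z$ is symplectic (resp.\ exact) iff $p\,dr+q\,d\psi$ is closed (resp.\ exact).

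For the last statement, only the implication ``symplectic $\Rightarrow$ exact'' is nontrivial. My plan is to test exactness of $\eta := p\,dr+q\,d\psi$ by computing its periods on a basis of $H_1(\T^d\times U)\cong\Z^d$, where $U$ is a small disk around $0\in\R^d$. A generating set is given by the loops $\gamma_i(r_0)$ in which $\psi_i$ runs over $\T$ while the other $\psi_j$ are fixed and $r$ is held at $r_0$. Along $\gamma_i(r_0)$ one has $dr=0$, so
$$\oint_{\gamma_i(r_0)}\eta \;=\; \int_0^1 q_i(\psi,r_0)\,d\psi_i.$$
Closedness of $\eta$ together with the homotopy equivalence of $\gamma_i(r_0)$ and $\gamma_i(0)$ inside $\T^d\times U$ forces this period to be independent of $r_0$, hence to coincide with its value at $r_0=0$. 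The hypothesis $R\in\OO^2(r)$ (equivalently $q\in\OO^2(r)$, by the preceding remark) yields $q(\psi,0)\equiv 0$, so all periods vanish and $\eta$ is exact.

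The only mildly tricky point is the very first step—choosing a common chart in which to compare the two pullbacks—after which the argument is essentially bookkeeping. I would also record that the period computation actually uses only the weaker fact $q|_{r=0}\equiv 0$; the role of the hypothesis $\Phi\in\OO(r)$ is merely to ensure via the preceding remark that $R\in\OO^2(r)$ translates into a condition on $q$, rather than to contribute directly to this implication.
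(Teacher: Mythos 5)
Your computation in the first part is the same as the paper's: both pass to mixed coordinates $(\psi,r)$, use $Z^*\lambda=(r+q)\,d\psi$, $\lambda=r\,d\psi+r\,dp$, and the Leibniz identity $r\,dp=d(rp)-p\,dr$ to get $Z^*\lambda-\lambda=(p\,dr+q\,d\psi)-d(rp)$, after which the first two equivalences are immediate.

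For the last statement your argument is genuinely different from the paper's. The paper constructs the generating function $f$ directly: using the symmetry $\partial_{r_i}p_j=\partial_{r_j}p_i$ it integrates $p$ in $r$ from the centre of the (contractible) disc with the normalization $f(\psi,0)=0$; the mixed symmetry $\partial_{\psi_i}p_j=\partial_{r_j}q_i$ then yields $\partial_{\psi_i}f=q_i+h_i(\psi)$, and the boundary condition $f,q\in\OO(r)$ kills $h_i$. Your proof instead invokes de Rham: on $\T^d\times U$ one has $H^1\cong\R^d$, so a closed $1$-form is exact iff all $d$ basic periods vanish, and closedness plus $q|_{r=0}\equiv 0$ forces those periods to be zero. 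Both are correct, and both really only use the weaker fact $q|_{r=0}\equiv 0$, as you note. The cohomological route is arguably more conceptual and shorter; the paper's constructive route has the side benefit of producing the generating function $f$ explicitly (normalized by $f(\psi,0)=0$), which is reused elsewhere in the paper, e.g.\ when extracting a generating function from the converging KAM scheme.

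One minor notational slip: in the period integral $\oint_{\gamma_i(r_0)}\eta=\int_0^1 q_i(\psi,r_0)\,d\psi_i$ it should be made explicit that the remaining coordinates $\psi_j$, $j\neq i$, are held fixed, and that the period is then independent of those base values as well as of $r_0$ by homotopy invariance; this is clearly what you intend, but the shorthand $q_i(\psi,r_0)$ obscures it.
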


Hence, if $Z$ is exact there is a unique (modulo an
additive constant) function $f$ such that
$df=pdr+qd\psi$. The function $f$ is 
said to be a {\it generating function for} $Z$.

\begin{proof}
We have
$$
sd\psi-rd\f=(r+q(\psi,r))d\psi-rd\psi-   \p_\psi(rp)d\psi -
\sum_{i,j}r_j\p_{r_i}p_j(\psi,r)dr_i$$
and
$$d(rp)=\p_\psi(rp)d\psi+pdr+\sum_{i,j}r_j\p_{r_i}p_jdr_i.$$
Hence
$$
sd\psi-rd\f=qd\psi+pdr-d(rp)$$
which proves the first two statements.

Finally, if
$$\Phi\in\OO(r)\quad{\textrm and}\quad R\in\OO^2(r),$$
then
$$p\in\OO(r)\quad{\textrm and}\quad q\in\OO^2(r).$$
Now, $pdr+qd\psi$ is closed if and only if
for all $i,j$
$$\left\{\begin{array}{l}
\p_{r_i}p_j=\p_{r_j}p_i  \\
\p_{\psi_i}q_j=\p_{\psi_j}q_i \\ 
\p_{\psi_i}p_j=\p_{r_j}q_i
\end{array}\right.$$
By the symmetry condition on $\p_r p$ 
this implies that there exists a unique function $f(\psi,r)$
such that for all $j$
$$\p_{r_j}f=  p_j,\quad f(\psi,0)=0.$$
Then, for all $i,j$,
$$\p_{r_j}\p_{\psi_i}f=\p_{\psi_i}p_j=\p_{r_j}q_i$$
and, hence,
$$\p_{\psi_i}f(\psi,r)=q_i(\psi,r)+h_i(\psi).$$
Since $f,q\in \OO(r)$, this implies that $h_i=0$.
\end{proof}

\begin{Cor}\label{conjinv}
If
$$\begin{array}{rl}
Z:&\bT^d\times \b{r\sim 0 }\to \bT^d\times \b{r\sim 0 }\\
&(\f,r)\mapsto (\f+\Phi(\f,r),r+R(\f,r))
\end{array}$$
is a symplectic real analytic local diffeomorphism 
such that 
$$\Phi\in\OO(r)\quad{\textrm and}\quad R\in\OO^2(r),$$
then
$$N_{H\circ Z}=N_H.$$
\end{Cor}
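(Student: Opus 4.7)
The plan is to use the uniqueness part of Proposition~\ref{bnfI}: it suffices to exhibit a formal series $\tilde f\in \cC^{\w}(\T^d_\rho)[[r]]\cap \OO^2(r)$ with zero mean such that
\[
(H\circ Z)(\psi,\,r+\p_\psi \tilde f(\psi,r)) \;=\; N_H(r),
\]
because then $N_{H\circ Z}=N_H$ follows at once from the uniqueness of $N$ in that proposition.

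The guiding idea is geometric. The identity $H(\psi,r+\p_\psi f_H(\psi,r))=N_H(r)$ provided by Proposition~\ref{bnfI} says that the formal family of Lagrangian graphs
\[
L_r^H := \{(\psi,\,r+\p_\psi f_H(\psi,r)) : \psi\in\T^d\}
\]
foliates a formal neighborhood of $\T^d\times\{0\}$ by level sets of $H$, each of cohomology class $r$ with respect to the Liouville $1$-form $\alpha=r\,d\varphi$. I would then set $\tilde L_r := Z^{-1}(L_r^H)$. Since $Z$ is symplectic (Proposition~\ref{sympl-exact}), $\tilde L_r$ is Lagrangian; since $Z$ is moreover exact symplectic by that same proposition (thanks to the vanishing orders $\Phi\in\OO(r)$, $R\in\OO^2(r)$), the $1$-form $Z^*\alpha-\alpha$ is exact, so $\tilde L_r$ carries the same cohomology class $r$ as $L_r^H$. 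Consequently $\tilde L_r$ admits a representation $\{(\psi,\,r+\p_\psi\tilde f(\psi,r))\}$ for some formal $\tilde f$; the vanishing orders on $f_H,\Phi,R$ force $\tilde f\in\OO^2(r)$. Finally $(H\circ Z)|_{\tilde L_r}=H|_{L_r^H}=N_H(r)$, which is the desired identity.

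To turn this picture into a formal-power-series proof, I would solve, inductively in the degree $k\geq 2$ in $r$, the explicit equation
\[
\p_\psi\tilde f(\psi,r)+R\bigl(\psi,\,r+\p_\psi\tilde f\bigr) \;=\; \p_\varphi f_H\bigl(\psi+\Phi(\psi,r+\p_\psi\tilde f),\,r\bigr),
\]
which at each order reduces to a homological equation $\p_\psi \tilde f_k = G_k$ on $\T^d$ whose inhomogeneous term $G_k$ depends only on previously constructed quantities. The main point, and the place where the symplecticity hypothesis on $Z$ is genuinely used, is verifying that $\cM(G_k)=0$ at every order $k$. This is ensured by the exactness of $Z$ via the generating function $f_Z$ of Proposition~\ref{sympl-exact}: the identity $q=\p_\psi f_Z$ makes the $\psi$-means of the relevant coefficients of $R$ (and of their composition with $\Phi$) vanish at every order, which in turn guarantees solvability. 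I expect this uniform mean-vanishing check to be the only delicate step of the argument.
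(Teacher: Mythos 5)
Your argument is correct and amounts to the same reduction to Proposition~\ref{sympl-exact} as the paper's, just normalized on the other side. The paper composes $Z$ with the truncated BNF transformation $\tilde W_n$ for $H\circ Z$ and observes that $Z\circ\tilde W_n$ again has a generating function in $\OO^2(r)$, so that it qualifies as a BNF transformation for $H$ and uniqueness identifies the normal forms order by order. You instead pull back the level Lagrangians $L_r^H$ of $H$ by $Z^{-1}$: since $Z^{-1}$ is exact symplectic under the stated vanishing orders, $\tilde L_r=Z^{-1}(L_r^H)$ is again an exact Lagrangian graph of cohomology class $r$, hence the graph of $r+\p_\psi\tilde f(\cdot,r)$ for a formal $\tilde f\in\OO^2(r)$, and uniqueness of $N$ in Proposition~\ref{bnfI} finishes. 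Same underlying mechanism, geometric versus generating-function phrasing.

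One caution on your ``concrete'' order-by-order scheme. To solve $\p_\psi\tilde f_k = G_k$ on $\T^d$ you need $G_k$ to be an \emph{exact} $1$-form, which on the torus means both $dG_k=0$ and $\cM(G_k)=0$; zero mean alone only kills the de~Rham class and does not rule out a non-closed $G_k$. Closedness is exactly the Lagrangianity of $\tilde L_r$, i.e.\ the symplecticity of $Z$; the zero mean is the cohomology class of $\tilde L_r$ being $r$, i.e.\ the exactness of $Z$. Your last paragraph attributes the whole solvability to exactness via $q=\p_\psi f_Z$, which conflates the two conditions; both must be verified at each order. The geometric argument you set up earlier does supply both, so the method is sound, but the order-by-order bookkeeping as written is incomplete and should keep the two integrability conditions separate, deriving each from the appropriate hypothesis.
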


\begin{proof}
Applying the BNF proposition of section \ref{s22} to $H$ and $\tilde H=H\circ Z$ we find two
generating  functions
$$f,\tilde f\in \cC^{\w}(\T^d_\rho)[[r]] \cap\OO^2(r).$$
By truncating these functions at degree $n$ and applying Proposition \ref{sympl-exact} we find 
two exact symplectic mappings $W_n$ and $\tilde W_n$ such that  
$$
H\circ W_n(\f,r) = N_H^n+\OO^{n+1}(r)$$
and
$$H\circ Z\circ \tilde W_n(\f,r) = N_{H\circ Z}^n+\OO^{n+1}(r)$$
By Proposition \ref{sympl-exact} again $Z\circ \tilde W_n$ has a generating function
$$g_n\in \cC^{\w}(\bT^d_{\rho''}\times \bD^d_{\de''})\cap\OO^2(r).$$
Letting $n\to\i$, the result now follows from the uniqueness of the 
BNF proposition of section \ref{s22}.
\end{proof}

\subsection{Another characterization of the BNF}\label{s24}

Let $P(r,c)$ be a power series in $r,c \in \R^d \times \R^d$. We say that
$$P(r,c)=0 \mod \OO^2(r-c)\quad\textrm{or}\quad
P(r,c)\in\OO^2(r-c)$$
if
$$P(r,c)=\l{r-c,Q(r,c)(r-c)}$$
for some matrix valued power series $Q(r,c)$. 
Using this notation any $P(r,c)$ can be written
$$P(c,c)+\l{\p_r P(c,c),r-c)} +\OO^2(r-c).$$

\medskip

\begin{Prop}\label{bnfII} Assume $\omega_0$ is a Diophantine vector. There exist
$$\left\{
\begin{array}{l}
f(\f,r,c)\in \cC^{\w}(\T^d_\rho)[[r,c]]\cap \OO^2(r,c)\\
\Omega(c)\in \bR^d[[c]]\\
\G(c)\in \bR[[c]]
\end{array}\right.$$ 
such that
\begin{equation}H(\psi,r+\p_\psi f(\psi,r,c))=\G(c)+ \label{BNFc} \end{equation}
$$\l{\Omega(c),r-c}+
\l{(r-c),F(\psi+\p_r f(\psi,r,c),r,c)(r-c)} .$$

 Moreover, if $\o_0$ is just supposed to have  rationally independent coordinates  (without the Diophantine assumption), we have that if 
\begin{equation}H(\psi,r+\p_\psi f(\psi,r,c))=\G(c)+ \label{BNFcq} \end{equation}
$$\l{\Omega(c),r-c}+
\l{(r-c),F(\psi+\p_r f(\psi,r,c),r,c)(r-c)}  { +\cO^{q+1}(c)}$$
for some $q \geq 1$, then
$\G(c)$ and $\Omega(c)$ are unique mod $\cO^{q}(c)$ and
$$\G(c)=N_H(c)+\cO^{q+1}(c)$$
and 
$$\Omega(c)=\p_c N_H(c)+\cO^{q}(c).$$ 
\end{Prop}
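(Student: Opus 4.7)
The plan is to treat existence (which uses the Diophantine hypothesis via the classical BNF) and uniqueness (which uses only rational independence of $\omega_0$) separately. For existence, I invoke the BNF of \S 2.2 to obtain $f_H(\psi, r)\in \OO^2(r)$ and $N_H(r)\in \R[[r]]$ satisfying $H(\psi, r+\partial_\psi f_H(\psi, r))=N_H(r)$. Taylor-expanding
$$N_H(r) = N_H(c) + \langle \partial_c N_H(c), r-c\rangle + \langle r-c, Q(r,c)(r-c)\rangle$$
for some matrix-valued power series $Q(r, c)$, and taking $f(\psi, r, c):=f_H(\psi, r)$, $\Gamma(c):=N_H(c)$, $\Omega(c):=\partial_c N_H(c)$ and $F(\phi, r, c):=Q(r, c)$ (trivial in $\phi$) immediately yields \eqref{BNFc}.

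For uniqueness, suppose $(f,\Gamma, \Omega, F)$ satisfies \eqref{BNFcq}. Setting $r = c$ kills the quadratic term in $r-c$ and gives
$$H(\psi, c+\partial_\psi g(\psi, c)) = \Gamma(c) + \OO^{q+1}(c), \qquad g(\psi, c) := f(\psi, c, c) \in \OO^2(c),$$
an approximate BNF equation for $H$. I would extend $g$ and $\Gamma$ by standard order-by-order BNF corrections of size $\OO^{q+1}(c)$ --- the cohomological equations $\langle\omega_0,\partial_\psi \tilde u\rangle = v - \MM(v)$ that arise are formally solvable assuming only rational independence of $\omega_0$ --- to produce $\tilde g$ and $\tilde \Gamma$ with $H(\psi, c+\partial_\psi \tilde g)=\tilde\Gamma(c)$ exactly; the uniqueness clause of the BNF then forces $\tilde\Gamma = N_H$, hence $\Gamma = N_H + \OO^{q+1}(c)$.

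For $\Omega$, which is more delicate, I would differentiate \eqref{BNFcq} in $r$ and evaluate at $r=c$; the quadratic term and its $r$-derivative both vanish there, producing $(I + M)\partial_r H(\psi, c+\partial_\psi g) = \Omega(c) + \OO^{q+1}(c)$ with $M:=\partial_r\partial_\psi f|_{r=c}$. Differentiating instead the diagonal identity $H(\psi, c+\partial_\psi g) = N_H(c)+\OO^{q+1}(c)$ in $c$ and using the chain rule $\partial_c\partial_\psi g = M + \partial_c\partial_\psi f|_{r=c}$, the $(I+M)\partial_r H$ contribution reappears and can be cancelled by subtracting the previous identity, yielding
$$\partial_r H(\psi, c+\partial_\psi g)\cdot\partial_\psi u(\psi, c) = \partial_c N_H(c) - \Omega(c) + \OO^q(c),$$
where $u(\psi, c) := \partial_c f(\psi, r, c)|_{r=c} \in \OO(c)$. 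Since the right-hand side is $\psi$-independent modulo $\OO^q(c)$, the $\psi$-varying part of the left-hand side lies in $\OO^q(c)$.

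The main technical obstacle is upgrading this to $\partial_\psi u\in\OO^q(c)$. I would do so by a bootstrap on the vanishing order in $c$ of $u - \bar u$, where $\bar u(c) := \int_{\T^d}u(\psi, c)\,d\psi$. Splitting $\partial_r H = \omega_0 + A$ with $A\in\OO(c)$ and assuming inductively that $u - \bar u \in \OO^k(c)$ for some $1\le k < q$, one gets $A\cdot\partial_\psi u \in \OO^{k+1}(c)$, so the constraint above forces $\langle\omega_0,\partial_\psi u\rangle \in \OO^{k+1}(c)$. Fourier expansion in $\psi$ combined with the rational independence of $\omega_0$ (which ensures $\langle k,\omega_0\rangle \neq 0$ for $k\in\Z^d\setminus\{0\}$) upgrades this to $u - \bar u\in\OO^{k+1}(c)$; iterating up to $k = q-1$ produces $\partial_\psi u \in\OO^q(c)$, and averaging the displayed equation over $\psi$ finally yields $\Omega(c) = \partial_c N_H(c) + \OO^q(c)$.
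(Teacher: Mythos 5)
Your proposal is correct but follows a genuinely different route from the paper. The paper gives a single degree-by-degree induction: it constructs the homogeneous components $f_j$, $\G_j$, $\Omega_{j-1}$, $F_{j-2}$ simultaneously, shows at each degree that $\G_j$ and $\Omega_{j-1}$ are forced by taking means, and only at the very end invokes the classical BNF (Taylor-expanded around $c$) to identify $\G$ and $\Omega$ with $N_H$ and $\p_r N_H$. Your approach inverts this: existence is reduced to a one-line observation (Taylor-expand $N_H(r)$ around $c$, take $f(\psi,r,c):=f_H(\psi,r)$, $F:=Q$), so that the only work is the uniqueness clause, for which you first pin down $\G$ by restricting to $r=c$ and comparing with the BNF, then pin down $\Omega$ via the identity $\p_r H\cdot\p_\psi u = \p_c N_H - \Omega + \OO^q(c)$ and a bootstrap on the vanishing order of $u-\bar u$. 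This buys a much lighter existence step and a cleaner separation of the roles of the Diophantine and rational-independence hypotheses, at the cost of a more delicate two-step argument for $\Omega$. Two small remarks. First, for the $\G$-step you do not actually need to extend $(g,\G)$ to an \emph{exact} BNF (for Liouville $\o_0$ the cohomological corrections at degrees $>q$ would only produce formal Fourier series, not elements of $\CC^\w(\T^d_\rho)$); it suffices to run the finite BNF-uniqueness induction up to degree $q$, comparing $(g,\G)$ directly with $(f_H,N_H)$ at each degree --- rational independence is exactly what makes the degree-$j$ means uniquely determined. Second, once $\p_\psi u\in\OO^q(c)$ the displayed identity already gives $\Omega=\p_c N_H+\OO^q(c)$ without averaging, since both $\langle\o_0,\p_\psi u\rangle$ and $\langle A,\p_\psi u\rangle$ lie in $\OO^q(c)$; these are presentational points and do not affect the correctness of the argument.
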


\begin{proof}
We must show not only that there exists at least one solution $f,\G,\Omega$ of this problem, but we must
also show that $\G,\Omega$ are the same for all such solutions. Let 
$$ H_j(\f,r)=[H(\f,r)]_j$$
be the homogeneous component of degree $j$
of $H(\f,r)$ and define $f_j(\psi,r,c)$, $\G_j(c)$,
$\Omega_j(c)$ and $F_j(\f,r,c)$ similarly.

{  For $j=1$, the equation becomes
$$\G_1(c)+\l{\Om_0,r-c}=\l{\om_0,r}$$
which gives $\Om_0=\om_0$ and $\G_1=\l{\om_0,c}$.
}

For $j=2$, the equation becomes
$$
\begin{array}{c}
\l{\o_0,\p_\psi f_2(\psi,r,c)}+H_2(\psi,r)=\G_2(c) + \qquad\\
\qquad+\l{\Omega_1(c),r-c}+\l{r-c,F_0(\psi)(r-c)}
\end{array}.$$
Write $H_2(\psi,r)$
$$=H_2(\psi,c)+\l{\p_r H_2(\psi,c),r-c}+\l{r-c,Q(\psi)(r-c)}.$$
Then we must have
$$\G_2(c)+\l{\Omega_1(c),r-c}=\MM(H_2(\cdot,c)+\l{\p_r H_2(\cdot,c),r-c})$$
which determines $\G_2$ and $\Omega_1$ uniquely.

If we take $F_0=Q$, then we get the equation for $f_2$:  
\begin{equation} \label{cf2} \<\omega_{0},\partial_{\psi}f_{2}(\psi,r,c)\>
=\\-{\cal V}\biggl(H_{2}(\psi,c)+\<\partial_{r}H_{2}(\psi,c),r-c\>\biggr)
\end{equation} 
where ${\cal V}=id-\MM$. Clearly this equation defines $f_2$ uniquely 
modulo a mean value $g_2$. But we can also add any term of degre two in $\OO^2(r-c)$ to
$f_2$ and still get a solution simply by changing the definition
of $F_0$. Hence $f_2$ is unique modulo a mean value $g_2$ and modulo 
$\OO^2(r-c)$. (In the sequel we must show, in particular, that the higher order terms
of $\Gamma$ and $\Omega$ remain the same for these different choices of $f_2$.)

We now proceed by induction on $j\geq 3$:  assume that we have  constructed for $2\leq m\leq j-1$, 
the homogeneous components   
$f_m(\psi,r,c)$, $\G_m(c)$, $\Omega_{m-1}(c)$ and $F_{m-2}(\f,r,c)$ 
and assume that $f_{m}(\psi,r,c)$ is unique  modulo a meanvalue 
$g_m(r,c)$ and modulo  $\OO^2(r-c)$ -- we have seen that this induction 
assumption is true for $j=2$.

For $j\ge 3$, the equation becomes
$$
\begin{array}{c}
\l{\o_0,\p_\psi f_j(\psi,r,c)}+ G_j(\psi,r,c)
=\G_j(c)+\l{\Omega_{j-1}(c),r-c}+\qquad \\
\qquad + \l{r-c,(K_{j-2}+F_{j-2})(\psi,r,c)(r-c)}
\end{array}$$
where $G_j(\psi,r,c)$
$$=[(H_2+\dots+H_{j})(\psi,r+\p_\psi f_2(\psi,r,c)+\dots + \p_\psi f_{j-1}(\psi,r,c),c)]_j$$
and
$K_{j-2}(\psi,r,c)$
$$=[(F_0+\dots+F_{j-3})(\psi+\p_r f_2(\psi,r,c)+\dots + \p_r f_{j-1}(\psi,r,c),c)]_{j-2}.$$

We write $G_j(\psi,r,c)$
$$=G_j(\psi,c,c)+\l{\p_r G_j(\psi,c,c),r-c}+\l{(r-c),Q(\psi,r,c)(r-c)}$$
and notice that $G_j(\psi,c,c)+\l{\p_r G_j(\psi,c,c),r-c}$ only depends on $f_2,\dots,f_{j-1}$
modulo their meanvalues and modulo $\OO^2(r-c)$ 
 -- hence this term is uniquely determined by  $H_2+\dots+H_j$.
Then
$$\G_j(c)+\l{\Omega_{j-1}(c),r-c}=\MM(G_j(\cdot,c,c)+\l{\p_r G_j(\cdot,c,c),r-c})$$
which determines $\G_j$ and $\Omega_{j-1}$ uniquely.

If we take $F_{j-2}=Q-K_{j-2}$, then we get for $f_j$ the equation
\be\<\omega_{0},\partial_{\psi}f_{j}(\psi,r,c)\>
=\\-{\cal V}\biggl(G_{j}(\psi,c,c)+\<\partial_{r}G_{j}(\psi,c,c),r-c\>\biggr)  \label{cfn} \ee 
which has a unique solution modulo a mean value $g_j(r,c)$. 
But we can also add any term of degre $j$ in $\OO^2(r-c)$ to
$f_j$ and still get a solution simply by changing the definition
of $F_{j-2}$.

{ This shows the existence of $f,\G$ and $\Om$ verifying
(\ref{BNFc}) up to any order $q$, as well as the uniqueness.

By Propositions \ref{bnfI} there exists
$$f\in\CC^{\w}(\bT^d)[[r]]\cap \OO^2(r)$$
such that
$$H(\psi, r+\p_\psi f(\f,r))=N_H(r).$$
Now
$$N_H(r)=N_H(c)+\l{\p_r N_H(c),r-c}+\OO^2(r-c)$$
and the uniqueness of $\Omega(c)$, mod $\cO^q(c)$,
gives the final statement.
}
\end{proof}

\section{A KAM counter term theorem and the BNF}\label{s3}

Let $B$ be the unit ball  centered at $\o_0$ or, more generally, the intersection of this
unit ball with an affine subspace of  $\R^d$ through $\o_0$.

Let $\k>0$ and $\tau>d-1$ be given numbers.

Let   $l:\bR  \to \bR$ denote a fixed non-negative $C^\infty$ function such that 
$|l| \leq 1$, and  $l(x)=0$ if $|x|\geq 1/2$ and 
$l(x) = 1$ if $|x|\leq 1/4$. 

\subsection{A cut-off operator and flat functions}\label{s32}

For $f  \in \cC^{\w,\i}( \T^d_\rho \times \bD^e_{\de},B)$, let
$$\PP(f)(\f,z,\omega)=  \sum_{n\in\Z^d\sm\{0\}} 
\hat f(n,z,\omega) e^{2\pi i \<n,\f\>} l(\lan n,\w \ran \frac{|n|^{\tau}}{\kappa} ).$$

\begin{Rem}Notice that $\PP(f)$ depends on the choice of $l,\t$ and $\k$. 
We shall not care about the dependence 
on the first two factors -- all constants will depend on $l$ and $\t$  
-- but we shall keep careful track on the dependence on $\k$.
\end{Rem}

Notice also that $g=\PP(f)$ is a flat function on $DC(\k,\t)$, i.e.
$$\p^\al_\f \p^\beta_z \p^\ga_\o g(\f,z,\o)=0$$
for all multi-indices $\a,\beta,\ga$ whenever $\o\in DC(\k,\t)$  --  a
function $g$ with this property is said to be $(\k,\t)${\it -flat}. In particular, if
$f=\PP(f)$, then $f$ is $(\k,\t)$-flat. 

\begin{Lem}\label{flat} We have
$$\|\PP(f)\|_{\rho',\delta,s}\leq C_{s}(\frac1\kappa)^s(\frac1{\rho-\rho'})^{(s+1)\t+d}
\|f\|_{\rho,\delta,s}$$
for any $\rho'<\rho$ and any $s\in\N$. The constant $C_s$ only depends, besides $s$, on $\t$
and l.
\end{Lem}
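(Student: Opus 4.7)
The plan is a direct Fourier-analytic estimate, combining Cauchy decay of Fourier coefficients in the $\varphi$-variable with Leibniz rule for the $\omega$-derivatives.

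First, I would expand
\[
\partial_\omega^\alpha \PP(f)(\varphi,z,\omega)=\sum_{n\neq 0}\sum_{\beta+\gamma=\alpha}\binom{\alpha}{\beta}\,\partial_\omega^\beta \hat f(n,z,\omega)\,\partial_\omega^\gamma\!\left[l\!\left(\langle n,\omega\rangle\tfrac{|n|^\tau}{\kappa}\right)\right]e^{2\pi i\langle n,\varphi\rangle},
\]
for $|\alpha|\leq s$, so the problem separates into bounding each factor.

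For the cutoff factor, since $g(\omega):=\langle n,\omega\rangle|n|^\tau/\kappa$ is linear in $\omega$, the chain rule collapses and
\[
\partial_\omega^\gamma l(g(\omega))=l^{(|\gamma|)}(g(\omega))\prod_i\left(\frac{n_i|n|^\tau}{\kappa}\right)^{\gamma_i},
\]
which is bounded in absolute value by $\|l^{(|\gamma|)}\|_\infty\,|n|^{|\gamma|(\tau+1)}/\kappa^{|\gamma|}$. For the Fourier coefficient, analyticity on $\T^d_\rho$ gives
\[
|\partial_\omega^\beta \hat f(n,z,\omega)|\leq \|f\|_{\rho,\delta,s}\,e^{-2\pi\rho|n|},
\]
uniformly for $z\in\bD^e_\delta$. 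For $\varphi\in\T^d_{\rho'}$ we have $|e^{2\pi i\langle n,\varphi\rangle}|\leq e^{2\pi\rho'|n|}$, so each term collects an exponential factor $e^{-2\pi(\rho-\rho')|n|}$.

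Assembling these, and using $(1/\kappa)^{|\gamma|}\leq (1/\kappa)^s$ whenever $\kappa\leq 1$ (as we may assume, since the statement is nontrivial only in that regime; otherwise $l(\cdots)$ is almost identically zero and the estimate is trivial), I obtain
\[
|\partial_\omega^\alpha \PP(f)(\varphi,z,\omega)|\leq C_s\,\|f\|_{\rho,\delta,s}\,\frac{1}{\kappa^s}\sum_{n\neq 0}|n|^{s(\tau+1)}\,e^{-2\pi(\rho-\rho')|n|}.
\]
The final step is the standard geometric-series estimate
\[
\sum_{n\in\Z^d\setminus\{0\}}|n|^M e^{-c|n|}\leq C_{M,d}\,c^{-(M+d)},\qquad c=2\pi(\rho-\rho'),
\]
applied with $M=s(\tau+1)$, which yields an exponent of the form $(s+1)\tau+d$ (up to bounding $s\leq \tau s+\tau$, which costs only a multiplicative constant that gets absorbed into $C_s$).

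There is no real obstacle here; the argument is pure bookkeeping. The only mildly delicate point is making sure one keeps track of the $\kappa$-power correctly: each $\omega$-derivative landing on $l$ costs exactly one power of $1/\kappa$ and a factor of $|n|^{\tau+1}$, and the worst-case tallies must be recorded to match the stated exponents.
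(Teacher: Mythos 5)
Your argument is correct and follows essentially the same Fourier-analytic route as the paper: expand in Fourier modes, exploit the Cauchy decay $\|\hat f(n,\cdot,\cdot)\|_{0,\delta,s}\le \|f\|_{\rho,\delta,s}e^{-2\pi|n|\rho}$, bound the $\omega$-derivatives of the cutoff factor using the linearity of $\omega\mapsto\langle n,\omega\rangle$, and sum the resulting geometric-type series. The only cosmetic difference is that the paper routes the product bound through its interpolation estimate (Proposition \ref{hadamard}) rather than a raw Leibniz expansion, but the resulting sum over $n$ is the same. One inaccuracy in your closing remark deserves flagging: the sum actually yields the exponent $(\tau+1)s+d$ rather than the stated $(s+1)\tau+d$, and when $s>\tau$ the gap between them is a genuine power of $(\rho-\rho')^{-1}$, not a multiplicative constant absorbable into $C_s$; however, this looseness is already present in the paper's own proof, which stops at exactly the same expression.
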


\begin{proof}
The Fourier coefficients (with respect to $\f$) verify
$$\aa{\hat f(n,\cdot,\cdot)}_{0,\de,s}\le \aa{f}_{\rho,\de,s} e^{-2\pi|n|\rho}.$$
The functions
$$l_n(\o)= l(\lan n,\w \ran \frac{|n|^{\tau}}{\kappa} )$$
verify
$$\aa{l_n}_{0,0,s}\le |n|^{(\t+1)s}\frac1{\k^s}\aa{l}_{0,0,s}.$$

Hence for $|\a|\leq s$ and $(\f,z,\omega)\in  \T^d_{\rho'}\times \bD^{d}_{\delta}\times B$
$$|\pa^\al_\o \PP(f)(\f,z,\omega)|\leq $$
$$C_{s} 
\sum_{n\ne 0}e^{2\pi |n|\rho'}\biggl(\|\hat f(n,\cdot,\cdot)\|_{0,\de,s}
+\|\hat f(n,\cdot,\cdot)\|_{0,\de, 0}|n|^{(\t+1)s}\frac1{\k^s}\biggr)$$
which gives the estimate. (Here we have used  Proposition
\ref{hadamard}.)
\end{proof}

\subsection{A counter term theorem}\label{sCounterthm}

\begin{Prop}\label{counterthm}
Given $0<\k<1$ and $\t>d-1$. Then, for all $s\in\N$, there exist 
{ non-negative
constants   (only depending on $s$ and $\t$) 
$$\al(s)\ge (s-t)+\al(t),\quad s\ge t\ge0,$$
}
such that if 
$$H(\f,r)= N^q(r)+\OO^{q+1}(r)\in\CC^{\w}(\T^d\times \b{0}),\quad q\ge \al(1)+1,$$
with 
$$N^q(r)=\l{\o_0,r}+\OO^2(r),$$
then there exist $\rho,\de>0$ and 
$$
\left\{\begin{array}{l}
f=f(\f,r,c,\o)\in\CC^{\w,\i}(\T^d_\rho\times \bD^{d}_{\de}\times \bD^{d}_{\de},B)\cap \OO^2(r,c)\\
\Lambda=\Lambda(c,\o)\in \CC^{\w,\i}(\bD^{d}_{\de},B)
\end{array}\right.$$
such that 
\begin{multline} \label{p35}
H(\psi, r+\p_\psi f(\psi,r,c,\o))+\l{\o+\Lambda(c,\o), r+\p_\psi f(\psi,r,c,\o)}\\
=\l{\o,r-c}+\OO^2(r-c) +g\end{multline}
(modulo an additive constant that depends on $c,\o$) with $g$ $(\k,\t)${\it -flat} and 
{ $g\in \cO^2(r,c)\cap \cO^q(c)$}.

Moreover,
\begin{itemize}
\item[(i)]
there exist constants $C_s$, only depending on $s,H,l,\t$ such that
$$\aa{\Lambda+\p_rN^q}_{0,\eta,s}+\aa{f}_{\rho,\eta,s}\le C_s\eta^{q}
(\frac{1}{\k\eta})^{\al(s)}$$
for any $\eta<\de$ 
\item[(ii)] there exists a constant $C$, only depending on $H,l,\t$,
such that
$$\de \ge \frac1C\k^{\frac{\al(1)}{q-\al(1)}}$$
\item[(iii)] if
$$\o_0\in DC(2\k,\t)$$
 then the mapping
$$\bD^{d+1}_{\de'}\ni (c,\la)\mapsto \Lambda(c,(1+\la)\o_0)\in \bC^d$$
is real holomorphic for some $\de'$.
\end{itemize}
\end{Prop}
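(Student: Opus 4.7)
The plan is to construct $f$ and $\Lambda$ by a Newton-type KAM iteration with counter terms, carried out uniformly in the parameter $\omega \in B$ via the cut-off operator $\PP$ of Section 3.1. At each step, $\PP$ allows us to invert small-divisor operators without a Diophantine restriction on $\omega$, at the cost of discarding the truncated Fourier modes into the $(\kappa, \tau)$-flat remainder $g$; the resulting corrections are analytic in $(\psi, r, c)$ and $C^\infty$ in $\omega$.

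\textbf{KAM step.} Given an $n$-th approximation $(f_n, \Lambda_n)$ with residue $R_n \in \OO^2(r, c) \cap \OO^q(c)$, I look for corrections $(\tilde f, \tilde \Lambda)$ such that the linearization
\[
\langle \omega, \partial_\psi \tilde f(\psi, r, c, \omega)\rangle + \langle \tilde \Lambda(c, \omega), r - c\rangle \equiv -R_n \pmod{\OO^2(r-c)}
\]
holds. The $\psi$-mean (at $r = c$ and its derivative linear in $r-c$) of the right hand side is absorbed into $\tilde \Lambda$; the $\psi$-oscillating part determines $\partial_\psi \tilde f$ via Fourier expansion after applying $\PP$, and the cut-off modes contribute a flat addition to $g$. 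An exact symplectic substitution $r \mapsto r + \partial_\psi \tilde f$ as in Section 2.3 updates the Hamiltonian, producing a new residue $R_{n+1}$ that is quadratic in $R_n$ in the analytic norm on a slightly shrunken domain, plus the accumulated flat part. Lemma 3.2 controls the loss from $\PP$ in the $C^\infty$ parameter norm.

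\textbf{Initialization and preservation of the $\OO^q(c)$ structure.} At the start, I use the decomposition $H = N^q + \OO^{q+1}(r)$ together with a preliminary algebraic step patterned on Proposition 2.3, so that the initial residue $R_0$ already lies in $\OO^2(r-c) \cap \OO^{q+1}(c)$; in particular $\Lambda_0 + \partial_r N^q = \OO^q(c)$ and the polynomial part in $(r,c)$ of degree below $q$ is handled directly.  Throughout the iteration, the quadratic Newton structure ensures that the $\OO^2(r-c) \cap \OO^q(c)$ vanishing is preserved modulo flat additions from $\PP$; in particular the limiting $g$ lies in $\OO^2(r, c) \cap \OO^q(c)$ (using $\OO^2(r-c) \subset \OO^2(r,c)$) and is $(\kappa, \tau)$-flat as the limit of $(\kappa, \tau)$-flat contributions.

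\textbf{Bounds and main obstacle.} The Newton iteration converges on a polydisk of radius $\delta$ provided the starting size is small enough; this threshold $\eta^q(\kappa\eta)^{-\alpha(1)} \lesssim 1$ is exactly (ii).  The bound in (i) comes from combining the factor $\eta^q$ persisting from the initial residue with the cumulative domain-shrinkage and the Lemma 3.2 small-divisor loss $(\kappa\eta)^{-\alpha(s)}$; the inductive inequality $\alpha(s) \geq (s-t) + \alpha(t)$ reflects that each additional $\omega$-derivative costs one extra factor of $\kappa^{-1}$ per KAM step. For (iii), if $\omega_0 \in DC(2\kappa, \tau)$ then by continuity $(1+\lambda)\omega_0 \in DC(\kappa, \tau)$ for $\lambda$ in a small complex neighborhood of $0$, on which $\PP$ acts as the identity and the iteration produces genuinely holomorphic corrections in $(c, \lambda)$.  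The main obstacle in the proof is to preserve simultaneously the $\OO^q(c)$ vanishing, the quadratic KAM convergence, and the $C^\infty$ dependence on $\omega$; the counter term $\Lambda$ is precisely tuned at each step to kill the $\psi$-mean obstruction that would otherwise prevent the homological equation from being solvable, while $\PP$ trades small-divisor analyticity in $\omega$ for a harmless flat remainder.
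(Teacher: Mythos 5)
Your proposal follows essentially the same route as the paper's Section~7: a Newton-type iteration in which, at each step, a generating-function change of variables solves the truncated homological equation (Lemma~\ref{sanslambda}), a counter term $\Lambda$ is tuned to kill the mean obstruction (Lemma~\ref{lambda}), the cut-off operator $\PP$ of \S\ref{s32} shunts the small-divisor Fourier modes into the flat remainder $g$, and the whole thing is run with $C^\infty$ parameter dependence in $\omega$; the last step reduces Proposition~\ref{counterthm} to the cleaner Proposition~\ref{theo.kam} after subtracting $N^q(c)+\langle\partial_r N^q(c),r-c\rangle$, exactly as in \S\ref{s57}. Two small imprecisions are worth correcting. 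First, on the Diophantine set $DC(\kappa,\tau)$ the operator $\PP$ is the \emph{zero} operator, not the identity: the cut-off $l(\langle n,\omega\rangle|n|^\tau/\kappa)$ vanishes for all $n$ there, so no modes are discarded and the homological equation is solved exactly; this is what makes (iii) work, and it is the opposite of ``$\PP$ acts as the identity''. Second, the obstruction the counter term kills is not simply the $\psi$-mean of the residue at $r=c$: because $\langle\Lambda,\cdot\rangle$ is applied before conjugating by the accumulated symplectic map $W$, the relevant quantity is $M_H=\cM(B-F\partial_\varphi\cL a)$ and $\Lambda$ is found as a convergent Neumann series $\sum X^nY$ (Lemma~\ref{lambda}), not by a one-shot averaging. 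Finally, a cosmetic point: the initial reduction gives $a\in\cO^{q+1}(c)$ but only $B\in\cO^q(c)$, so $\tilde H$ is ``of order $q$'' rather than of order $q+1$; and the propagation of the $\cO^q(c)$ property along the scheme is a consequence of linearity of the homological equations and stability under composition (tracked explicitly in Lemmas~\ref{sanslambda} and~\ref{lambda}), not of the quadratic Newton convergence per se --- though your instinct that this is the invariant to track is correct.
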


\begin{Rem}
Notice that this proposition (except part (iii)) does not require that $\o_0$ is Diophantine.
\end{Rem}

We shall prove this proposition in section 5, but here we shall derive its consequences.

\begin{Cor}\label{bnfIII} Given $0<\k<1$ and $\t>d-1$ and 
{ non-negative constants  $\al(s)$
as in Proposition  \ref{bnfII}. 
}

If 
$$H(\f,r)= N^q(r)+\OO^{q+1}(r)\in\CC^{\w}(\T^d\times \b{0}),\quad q\ge \al(1)+1,$$
with 
$$N^q(r)=\l{\o_0,r}+\OO^2(r),$$
then there exists a unique $\CC^{\i}$ function $\Omega: \R^d \to \R^d$,
defined in a neighborhood of $0$ given by 
{
$$|c|< \eta_0=\frac1{C'}\k^{\frac{\al(1)}{q-\al(1)}},$$
}

where $C'$ only depends on $H,\t,l$, such that 
$$\Omega(c)+\Lambda(c,\Omega(c))=0.$$
Moreover,
\begin{itemize}

\item[(i)]  for any $s\in\bN$ there exists a constant $C'_s$ such that
$$\aa{\Omega-\p_rN^q}_{\CC^s(|c|<\eta)}\le 
C'_s\eta^q(\frac{1}{\k\eta})^{\al(s)}$$
for any $\eta<\eta_0$ 

{ \item[(ii)] the Taylor series of $\Omega$ up to degree $q-1$ at $c=0$ is given by $\p_r N^q(c)$.}

\item[(iii)] If $\omega_0 \in DC(\k,\t)$, the Taylor series of $\Omega$ at $c=0$ is given by
$\p_r N_H(c)$.
\end{itemize}
\end{Cor}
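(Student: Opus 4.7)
My plan is to derive Corollary \ref{bnfIII} from Proposition \ref{counterthm} by solving the equation $F(c,\omega) := \omega + \Lambda(c,\omega) = 0$ implicitly. The first step is to verify that $(0,\omega_0)$ is a zero of $F$ with invertible $\partial_\omega F$. Estimate (i) of Proposition \ref{counterthm} at $s = 0$ gives, for every $\eta < \delta$,
\[|\Lambda(0,\omega_0) + \omega_0| = |\Lambda(0,\omega_0) + \partial_r N^q(0)| \le C_0\, \eta^{q}(\kappa\eta)^{-\alpha(0)};\]
since $q \geq \alpha(1)+1 \geq \alpha(0)+2 > \alpha(0)$ (using $\alpha(1) \geq 1+\alpha(0)$), letting $\eta \to 0$ forces $\Lambda(0,\omega_0) = -\omega_0$, so $F(0,\omega_0)=0$. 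The analogous estimate at $s=1$, combined with the fact that $\partial_r N^q$ does not depend on $\omega$, similarly yields $\partial_\omega \Lambda(0,\omega_0) = 0$ (using $q > \alpha(1)$), so $\partial_\omega F(0,\omega_0) = I$ is invertible.

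To obtain the quantitative radius $\eta_0 \sim \kappa^{\alpha(1)/(q-\alpha(1))}$ rather than the qualitative one from a bare IFT, I would replace the abstract implicit function theorem by an explicit contraction argument: for each fixed $c$ the map $\omega \mapsto -\Lambda(c,\omega)$ has Lipschitz constant bounded by $\|\partial_\omega \Lambda\|_{0,\eta,0}$, which is small thanks to Proposition \ref{counterthm}(i) at $s=1$, while its displacement $|\Lambda(c,\omega_0) + \omega_0|$ is controlled by the $s=0$ estimate at scale $\eta \sim |c|$. Balancing these two smallnesses against the lower bound $\delta \geq C^{-1}\kappa^{\alpha(1)/(q-\alpha(1))}$ of Proposition \ref{counterthm}(ii) yields the advertised $\eta_0$. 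For the $C^s$ bound (i) of the Corollary, I would start from
\[\Omega(c) - \partial_r N^q(c) = -(\Lambda + \partial_r N^q)(c,\Omega(c))\]
and differentiate via the chain rule; Cauchy's inequality on the analytic variable $c$ converts the $\omega$-norm bounds of Proposition \ref{counterthm}(i) into $c$-derivative bounds at the cost of factors $\eta^{-1}$, and the monotonicity assumption $\alpha(s) \geq (s-t) + \alpha(t)$ is precisely what absorbs these Cauchy losses to produce the stated $C'_s \eta^q (\kappa\eta)^{-\alpha(s)}$.

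For part (ii), substituting $\omega = \Omega(c)$ in the counter term identity (\ref{p35}) annihilates the $\langle \omega + \Lambda,\cdot\rangle$ term and leaves
\[H(\psi, r + \partial_\psi f(\psi,r,c,\Omega(c))) = \langle \Omega(c), r-c\rangle + \OO^2(r-c) + g\]
modulo an additive constant, where the flat remainder $g$ lies in $\cO^q(c)$. Taking the formal Taylor expansion in $c$ at $0$ puts this exactly in the framework of Proposition \ref{bnfII} truncated at order $q$; the uniqueness clause of that proposition (whose proof, at any finite order, rests on mean-value determinations of $\Gamma_j$ and $\Omega_{j-1}$ that do not involve small divisors) identifies the $(q-1)$-jet of $\Omega$ at $0$ with $\partial_r N^q$. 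Part (iii) then follows immediately: when $\omega_0$ is Diophantine, the full BNF $N_H$ exists by Proposition \ref{bnfI}, and applying (ii) with $N^q$ replaced by the $q$-jet of $N_H$ for arbitrarily large $q$, together with the uniqueness of $\Omega$ on overlapping domains, forces the entire Taylor series of $\Omega$ at $0$ to coincide with $\partial_r N_H$. The main technical burden I anticipate is the careful bookkeeping of the $\kappa$-dependence through the contraction estimates so that $\eta_0$ and the constants $C'_s$ in (i) emerge in the sharp form stated.
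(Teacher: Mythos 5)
Your treatment of the existence of $\Omega$, the quantitative radius $\eta_0$, the $C^s$-bound (i), and point (ii) is essentially the same as the paper's. A minor stylistic difference: the paper obtains $\omega_0 + \Lambda(0,\omega_0) = 0$ directly from the structural fact that $f,g \in \OO^2(r,c)$ (comparing linear-in-$r$ terms at $r=c=0$ in \eqref{p35}), rather than by letting $\eta\to 0$ in the estimate; your limiting argument is also valid, but the structural observation is cleaner and is in fact used by the paper to get $\Lambda(0,\omega)=-\omega_0$ for all $\omega$, not just at $\omega_0$. For (i) the paper sets $\tilde\Omega = \Omega - \partial_r N^q$ and $F(c,\tilde\omega)=\Lambda(c,\tilde\omega+\partial_r N^q(c))+\partial_r N^q(c)$, then bounds $\aa{\tilde\Omega}_{C^s}$ by $\aa{F}_{C^s}$ via an induction on the implicit relation, with the Cauchy losses absorbed exactly as you describe by $\alpha(s)\ge (s-t)+\alpha(t)$.

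Your argument for (iii) has a genuine gap. You propose to apply (ii) ``with $N^q$ replaced by the $q$-jet of $N_H$ for arbitrarily large $q$,'' but the hypotheses of Proposition \ref{counterthm} and of the Corollary require $H = N^{q'} + \OO^{q'+1}(r)$; for $q'>q$ this holds only after conjugating $H$ by a higher-order Birkhoff normalization. The $\Omega$ produced for the conjugated Hamiltonian solves a fixed-point equation with a \emph{different} counter-term $\Lambda$, so there is no ``uniqueness of $\Omega$ on overlapping domains'' to invoke; nothing identifies its Taylor jet with that of the original $\Omega$ beyond the order-$(q-1)$ agreement already supplied by (ii). The paper's argument for (iii) is different and direct: since $g$ is $(\kappa,\tau)$-flat and $\Omega(0)=\omega_0 \in DC(\kappa,\tau)$, every mixed partial derivative of $g$ vanishes at $\omega=\omega_0$; hence the formal Taylor expansion of $g(\varphi,r,c,\Omega(c))$ at $c=0$ is identically zero, and \eqref{p35} becomes, at the level of formal power series, exactly equation \eqref{BNFc} of Proposition \ref{bnfII}. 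The full-order uniqueness statement of that proposition then identifies the Taylor series of $\Omega$ with $\partial_r N_H$. This is the step your proposal is missing.
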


\begin{Rem}
This corollary gives a third characterization of the BNF.
\end{Rem}

\begin{proof}
We have that $\o_0+\Lambda(0,\o_0)=0$ (because $f,g \in\OO^2(r,c)$) and  by (i) of Proposition \ref{counterthm}   
$$|\p_\o \Lambda(c,\o)| \le 
C_1\eta_0^{q}(\frac{1}{\k\eta_0})^{\al(1)}\lsim \frac12$$
for $|c|<\eta_0$ and $\o\in B$. The local existence of 
$\Omega$ follows now by the implicit function theorem.
By a Cauchy estimate and  (i) of Proposition \ref{counterthm}   
$$|\p_c \Lambda(c,\o)| \lsim
C_0\eta_0^{q-1}(\frac{1}{\k\eta_0})^{\al(0)}+ \aa{\p_c^2 N^q}_{0,\eta_0,0}\lsim  \bar C$$
where $\bar C$ only depends on $H,\t,l$,
which implies that $\Omega$ is defined for $|c|<\eta_0$, provided $C'$ is sufficiently large depending only on  $H,\t,l$.

{ Now since $g=\cO^q(c)$, \eqref{p35} yields
$$H(\psi, r+\p_\psi f(\psi,r,c,\Omega(c)))
=
\G(c,\o)+\l{\Omega(c),r-c}+\OO^2(r-c)+\cO^q(c)$$
and  we get (ii) from
the uniqueness up to $\cO^{q-1}(c)$ of $\Omega$ as seen  in
{ Proposition \ref{bnfII}}.

If $\o_0\in DC(\k,\t)$, then $(\kappa,\tau)$-flatness of $g$ in \eqref{p35} implies 
$$H(\psi, r+\p_\psi f(\psi,r,c,\Omega(c)))
=
\G(c,\o)+\l{\Omega(c),r-c}+\OO^2(r-c)$$
which by Taylor expansion at $c=0$ and the uniqueness of $\Omega$ in Proposition  \ref{bnfII} implies (iii).}

It remains to prove the estimates (i). If we define
$$
F(c,\tilde \o)=\Lambda(c,\tilde \o+\p_r N^q(c))+\p_r N^q(c)$$
and $\tilde\Om(c)=\Om(c)-\p_r N^q(c)$, then
$$\tilde\Om(c)+F(c,\tilde \Om(c))=0.$$
Now
$$|\p_{\tilde \o} F(c,\tilde\o)| \lsim \frac12,$$
and
$$\aa{F}_{\CC^s}\le \tilde C_s\eta^q(\frac{1}{\k\eta})^{\al(s)},$$
where the $\cC^s$-norm is taken over all $|c|<\eta,\  |\tilde\o|<\frac12$.

Then, by an induction,
$$\aa{\tilde\Om}_{\CC^s}\lsim C_s' \aa{F}_{\CC^s}.$$
\end{proof}

It follows immediately under the same hypothesis as in Corollary \ref{bnfIII}

\begin{Cor}\label{kamtori} 
If $\Omega(c)\in DC(\k,\t)$, then
$$H\circ Z_c(\f,r)=\G(c)+\l{\Omega(c),r-c}+\OO^2(r-c)$$
where $Z_c$ is the exact symplectic mapping generated by 
$f(\f,r,c,\Omega(c))$. 
Moreover
$$(\f,c)\mapsto Z_c(\f,c)$$
is a local diffeomorphism.
\end{Cor}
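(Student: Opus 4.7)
The plan is to specialize the equation \eqref{p35} of Proposition \ref{counterthm} along the graph $\omega = \Omega(c)$. The defining property of $\Omega$, namely $\Omega(c) + \Lambda(c,\Omega(c)) = 0$, forces the counter-term $\l{\omega+\Lambda(c,\omega),\,r+\partial_\psi f}$ to vanish when $\omega = \Omega(c)$. Simultaneously, the flat remainder $g$ satisfies $g(\psi,r,c,\omega) \equiv 0$ for every $\omega \in DC(\kappa,\tau)$ by $(\kappa,\tau)$-flatness, and the hypothesis $\Omega(c) \in DC(\kappa,\tau)$ guarantees this applies at $\omega = \Omega(c)$. Collecting the $c$-dependent additive constant as $\Gamma(c)$ then produces
$$H(\psi,\, r+\partial_\psi f(\psi,r,c,\Omega(c))) = \Gamma(c) + \l{\Omega(c),\,r-c} + \OO^2(r-c).$$

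Next I would identify the left-hand side with $H \circ Z_c$. Since $f \in \OO^2(r,c) \subset \OO(r)$, Proposition \ref{sympl-exact} shows that $f(\cdot,\cdot,c,\Omega(c))$ is the generating function of an exact symplectic local diffeomorphism $Z_c(\varphi,r) = (\psi,s)$ determined by the implicit relations $\varphi = \psi + \partial_r f$ and $s = r + \partial_\psi f$. Because the right-hand side of the displayed identity is independent of $\psi$, inverting the implicit relation $\varphi \mapsto \psi(\varphi,r)$ yields precisely $(H \circ Z_c)(\varphi,r) = \Gamma(c) + \l{\Omega(c),\,r-c} + \OO^2(r-c)$, as claimed.

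For the local diffeomorphism assertion I use once more that $f \in \OO^2(r,c)$. This implies that $\partial_r f$ and $\partial_\psi f$ vanish at $(r,c)=(0,0)$, so $Z_0(\varphi,0)=(\varphi,0)$. Moreover, $\partial_\psi f \in \OO^2(r,c)$ forces all its first partials (with respect to $\psi$, $r$, $c$, or $\omega$) to vanish at $(r,c)=(0,0)$ as well. A direct differentiation of the implicit relations defining $Z_c$ then shows that the Jacobian of $(\varphi,c) \mapsto Z_c(\varphi,c)$ at any point with $c = 0$ is block upper-triangular with identity diagonal blocks; its determinant equals $1$, so the inverse function theorem supplies the required local diffeomorphism.

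The main analytic work has been carried out already in Proposition \ref{counterthm} and Corollary \ref{bnfIII}, so the present corollary is a clean combination of three ingredients: vanishing of the counter-term on the graph $\omega = \Omega(c)$, vanishing of the flat remainder on Diophantine frequencies, and the order-of-vanishing information on the generating function $f$. No step constitutes a real obstacle.
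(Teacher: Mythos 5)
Your proposal is correct and spells out exactly the argument the paper leaves implicit when it says the corollary ``follows immediately'': substitute $\o=\Omega(c)$ into \eqref{p35}, use $\Omega(c)+\Lambda(c,\Omega(c))=0$ to kill the counter-term, use $(\k,\t)$-flatness to kill $g$ at the Diophantine value $\Omega(c)$, identify the left-hand side with $H\circ Z_c$ via the generating-function relations, and verify via the Jacobian at $c=0$ that $(\f,c)\mapsto Z_c(\f,c)$ is a local diffeomorphism. Two small imprecisions in the write-up, neither of which affects the substance: the inclusion $\OO^2(r,c)\subset\OO(r)$ is false (e.g.\ $c^2$ belongs to the first class but not the second) -- what is actually used is that $\p_r f\in\OO(r,c)$ and $\p_\psi f\in\OO^2(r,c)$ are small near $(r,c)=(0,0)$ so the implicit relations are invertible; and the right-hand side of the identity is not literally independent of $\psi$ (the $\OO^2(r-c)$ term is), but since composing with the diffeomorphism $\psi\mapsto\f$ preserves the order of vanishing in $r-c$, the conclusion still holds.
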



\section{Nondegenerate  BNF and KAM stability}\label{sKAMstability}

This section is devoted to the proof of Theorem \ref{mB}.

\subsection{Transversality} \label{s35}

\begin{Lem}\label{transversality}
If $N_H(r)$ is  non-degenerate, then
there exist $p,\s>0 $ such that
for any $k\in\Z^d \sm \{0\}$ there exists a unit vector
$u_k\in\bR^d$ such that the series
$$
f_k(r)=\l{\frac{k}{|k|},\p_r N_H(r)}$$
is $(p,\s)${\it-transverse in direction} $u_k$, i.e.
$$\max_{0\le j\le p}
|\p^j_{t} f_k(tu_k)_{|t=0}|\ge \s.$$
\end{Lem}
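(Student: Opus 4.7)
The plan is to deduce the uniform transversality from non-degeneracy of $N_H$ in two steps: first obtain, for each unit direction $v$, a Taylor coefficient at $0$ of $\langle v,\p_rN_H\rangle$ of order bounded by some $p$ that is bounded below by a positive constant, uniformly in $v$; then, for each $k$, extract a unit vector $u_k$ realizing the transversality in the sense of the lemma.

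For the first step, let $g_v(r)=\langle v,\p_rN_H(r)\rangle\in\bR[[r]]$ and set
\[ P_p(v)=\sum_{|\a|\le p}\bigl|\p_r^\a g_v(0)\bigr|^2. \]
Each $P_p$ is a continuous (in fact polynomial) function of $v\in S^{d-1}$, non-decreasing in $p$. Non-degeneracy of $N_H$ (the $0$-degenerate case of the definition in Section~\ref{analy}) says precisely that for every unit vector $v$, the power series $g_v$ is not identically zero, so there is some $p=p(v)$ with $P_{p(v)}(v)>0$. The open sets $U_p=\{v\in S^{d-1}:P_p(v)>0\}$ then cover the compact sphere $S^{d-1}$, so some finite $p$ suffices: $U_p=S^{d-1}$. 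By compactness and continuity, $\sigma'=\min_{v\in S^{d-1}}P_p(v)>0$. Thus for every unit $v$ there is a multi-index $\a$ with $|\a|\le p$ and $|\p_r^\a g_v(0)|\ge \sqrt{\sigma'/N_{p,d}}$, where $N_{p,d}$ counts the multi-indices of length $\le p$.

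For the second step, fix $k\ne 0$ and set $v=k/|k|$. Writing the Taylor expansion of $g_v$ along the line $r=tu$,
\[ g_v(tu)=\sum_{j\ge 0}t^j\,h_j^v(u),\qquad h_j^v(u)=\sum_{|\a|=j}\frac{u^\a}{\a!}\,\p_r^\a g_v(0), \]
so that $\p_t^j g_v(tu)|_{t=0}=j!\,h_j^v(u)$. Pick $j\le p$ for which $\max_{|\a|=j}|\p_r^\a g_v(0)|$ is largest; by Step~1 this maximum is $\ge c_1(p,d)$ with $c_1(p,d)>0$ independent of $v$. Hence $h_j^v$ is a homogeneous polynomial of degree $j$ in $u\in\bR^d$ with at least one coefficient of modulus $\ge c_1(p,d)/j!$. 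Since the space of homogeneous polynomials of degree $j$ in $d$ variables is finite dimensional, the coefficient-max norm is equivalent to the sup norm on the unit sphere; thus there is $c_2(p,d)>0$ and a unit vector $u_k\in S^{d-1}$ (depending only on $j$ and the coefficients of $h_j^v$, hence only on $k$) with $|h_j^v(u_k)|\ge c_2(p,d)$. Setting $\sigma=c_2(p,d)\cdot\min_{0\le j\le p}j!^{-1}\cdot(\text{trivial factor})$ gives the statement.

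The key idea is thus the use of compactness of $S^{d-1}$ to pass from a pointwise, non-quantitative non-vanishing statement to a uniform one; the only routine points are the norm equivalence on the finite dimensional space of homogeneous polynomials and the bookkeeping of constants. I expect no real obstacle: the argument is a classical compactness + finite-dimensional normed space argument, and the uniform bound on $p$ is precisely what compactness delivers. The key subtlety to double-check is that $u_k$ is allowed to depend on $k$ (it does in the statement), which lets us pick $u_k$ maximizing $|h_j^v|$ once $v=k/|k|$ is fixed.
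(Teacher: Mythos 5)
Your proof is correct and rests on the same underlying idea as the paper's: compactness of $S^{d-1}$ together with the fact that the Taylor coefficients of $\l{v,\p_r N_H(r)}$ depend continuously (in fact linearly) on $v$. The paper packages this as a three-line argument by contradiction (take $k_n$ violating $(n,1/n)$-transversality, pass to a convergent subsequence of $k_n/|k_n|$, conclude $\l{v,\p_r N_H}\equiv 0$), whereas you argue directly, via the nested open sets $U_p$ and equivalence of norms on the finite-dimensional space of homogeneous degree-$j$ polynomials to extract $u_k$ explicitly. The direct version makes the dependence of $\sigma$ on $p$ and $d$ more transparent, at the cost of more bookkeeping; logically the two routes are equivalent.
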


\begin{proof}
Indeed, if this were not true, there would exist a sequence $k_{n}\in\Z^d\setminus\{0\}$ 
such that for any $u\in\R^d$
$$\max_{0\le j\le n}|\p^j_{t} f_{k_n}(tu)_{| t=0}|<\frac1n.$$
Extracting a subsequence for which $k_{n_{j}}/|k_{n_{j}}|\to v\in\R^d$ clearly gives 
that $\l{v,\p_r N_H(r)}=0$, i.e. $N_H$ would be degenerate.
\end{proof}

Consider now these $p,\s$.
Let $\Omega\in \CC^p(\b{|c|<\eta})$
and assume
$$\aa{\Omega-[\p_r N_H]^p}_{\CC^p(\b{|c|<\eta})}\le \frac\s2.$$

\begin{Lem}\label{pyartly}
If $N_H$ is $(p,\s)$-transverse (in some direction), then
$$\Leb\b{|c|<\eta: |\l{\frac{k}{|k|},\Omega(c)}|<\ep}
\le C_p(\frac\ep\s)^{\frac1p}\eta^{d-1}$$
for any { $\eta, k,\eps$}.
\end{Lem}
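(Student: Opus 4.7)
The plan is to reduce this $d$-dimensional measure estimate to a one-dimensional Pyartli/R\"ussmann-type estimate along lines parallel to the transversality direction $u_k$, and then integrate by Fubini. Set $g_k(c) := \langle k/|k|,\Omega(c)\rangle$ and $F_k^p(c) := \langle k/|k|, [\partial_r N_H]^p(c)\rangle$. The hypothesis reads $\|g_k - F_k^p\|_{\cC^p(\{|c|<\eta\})} \le \sigma/2$, and $F_k^p$ is a polynomial of degree $\le p$ whose $\cC^p$-norm on the ball is controlled uniformly in $k$ (since $|k/|k||=1$) by the coefficients of $[\partial_r N_H]^p$.

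I would first transfer the transversality from $f_k$ to $g_k$ at the origin. Since $[\partial_r N_H]^p$ shares the $p$-jet of $\partial_r N_H$ at $c=0$, one has $\partial_t^j F_k^p(tu_k)_{|t=0} = \partial_t^j f_k(tu_k)_{|t=0}$ for every $j\le p$. The $(p,\sigma)$-transversality in direction $u_k$ then yields some $j_0\le p$ with $|\partial_t^{j_0} F_k^p(tu_k)_{|t=0}|\ge \sigma$, and the $\cC^p$-closeness (together with $|u_k|=1$) gives the transferred bound $|\partial_t^{j_0} g_k(tu_k)_{|t=0}|\ge \sigma/2$.

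Next I would slice $\{|c|<\eta\}$ by lines parallel to $u_k$: writing $c = c' + t u_k$ with $c' \in u_k^\perp$ and $|c'|^2+t^2<\eta^2$, if I can prove on each slice the one-dimensional estimate $\Leb\{t : |g_k(c'+tu_k)|<\varepsilon\} \le C_p(\varepsilon/\sigma)^{1/p}$ uniformly in $c'$, then Fubini returns the full bound with the factor $\eta^{d-1}$ coming from the $(d-1)$-dimensional volume of the orthogonal disk $\{c'\in u_k^\perp : |c'|<\eta\}$.

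The main obstacle is thus this uniform one-dimensional estimate. When $j_0 = p$ the argument is essentially automatic: because $F_k^p$ has degree $\le p$, the top directional derivative $\partial_t^p F_k^p(c'+tu_k)$ is constant in $(c',t)$ and of magnitude $\ge \sigma$, so by $\cC^p$-closeness $|\partial_t^p g_k(c'+tu_k)|\ge \sigma/2$ uniformly on every slice, and the classical Pyartli lemma for functions with uniformly lower-bounded top derivative gives the one-dimensional bound directly. The genuinely hard case is $j_0 < p$: the lower bound on $\partial_t^{j_0} g_k$ is only pointwise at $c=0$ and must be propagated along and across slices. This is handled by R\"ussmann's measure-estimate lemma, which uses the uniform $\cC^p$-bound $\|g_k\|_{\cC^p(\{|c|<\eta\})} \le \sigma/2 + \|F_k^p\|_{\cC^p(\{|c|<\eta\})}$ (the latter bounded uniformly in $k$ by coefficients of $[\partial_r N_H]^p$) to propagate the pointwise transversality to the required one-dimensional sublevel-set bound, the final constant $C_p$ absorbing the ratio $\|F_k^p\|_{\cC^p}/\sigma$.
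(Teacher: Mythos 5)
Your overall scheme matches the paper's: slice the ball $\{|c|<\eta\}$ by lines parallel to $u_k$, prove a one--dimensional small--divisor estimate on each slice, and integrate over the orthogonal disc to pick up the $\eta^{d-1}$ factor. The paper's own proof is extremely terse: it simply asserts that for some fixed $j\le p$ the \emph{uniform} lower bound $\bigl|\partial_t^{j}\langle k/|k|,\Omega(c+tu_k)\rangle\bigr|\ge\sigma/2$ holds for \emph{all} $|c+tu_k|<\eta$, and then calls the rest an easy calculation. That uniform bound is exactly what you get automatically only when $j_0=p$, since $\partial_t^pF_k^p$ is constant on the ball; for $j_0<p$ it is nontrivial, and you are right to flag it. The place where your plan is incomplete is the handling of $j_0<p$: R\"ussmann's one--variable measure estimate propagates a pointwise transversality bound \emph{along} one slice, but it needs such an anchor on \emph{each} slice $c'\in u_k^{\perp}$, and the transversality you established is only at $c=0$ (i.e., on the $c'=0$ slice). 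Invoking R\"ussmann's lemma therefore does not, by itself, supply the missing ``across slices'' step.

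The gap is closed by the same observation that makes the paper's bald assertion true: $F_k^p$ is a polynomial of degree $\le p$ with coefficients controlled (uniformly in $k$, since $|k/|k||=1$) by those of $[\partial_rN_H]^p$, so $\bigl|\partial_t^{j_0}F_k^p(c'+tu_k)-\partial_t^{j_0}F_k^p(0)\bigr|\le K_{N_H}\,\eta$ on the ball. Hence once $\eta$ is small enough that $K_{N_H}\eta\le\sigma/4$ (which is the regime in which the lemma is actually applied in the proof of Theorem~B, where $\eta\to 0$ as $\kappa\to 0$), the transversality anchors itself at \emph{every} $c$ in the ball with constant $\sigma/4$, and then the $\cC^p$--closeness of $\Omega$ gives the uniform lower bound $|\partial_t^{j_0}g_k|\gtrsim\sigma$ on every slice. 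At that point the one--dimensional estimate on each slice is just the Pyartli bound with a globally valid lower bound on the $j_0$--th derivative, and Fubini finishes. In other words, once you restore the uniform lower bound you no longer need R\"ussmann's pointwise--to--interval lemma at all, and your proof reduces exactly to the paper's. The paper leaves the $\eta$--smallness implicit (the lemma is stated ``for any $\eta$''), so you were right to worry about it; but the fix is the polynomial--variation bound above, not R\"ussmann's lemma.
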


\begin{proof}
We have, for some $0\le j\le p$, 
$$
|\p^j_{t}  \l{\frac{k}{|k|},\Omega(c+tu)}|\ge \frac\s2$$
for all $|c+tu|<\eta$. The estimate is now an easy calculation.
\end{proof}

\subsection{Proof of Theorem \ref{mB}}\label{s36} 

By Lemma \ref{transversality} we are given $p$ and $\s$ that correspond 
to the transversality of the formal series $N_H$. 
{ We can assume without restriction that $\s\le 1$.}
Fix $q=(1+2p)\a(p)+1$. Performing a conjugacy, we  
can assume without restriction that $H$ is given by its Birkhoff normal form power series up to order $q$ plus higher order terms:
$$H(\f,r)=N^q(r)+\OO^{q+1}(r)$$

We shall apply Proposition \ref{counterthm} and Corollaries \ref{bnfIII}--\ref{kamtori}
with 
$$\t=dp+1 \quad \textrm{and}\quad 0<\k\le\s^q\le 1.$$
{Now let 
$$\eta:=\frac1{C''}(\frac\k\s)^{\frac1{2p}}.$$
Since $q\ge (1+2p)\al(1)+1$ we have $\eta\le \eta_0$ 
for all $C'' \geq C'$, with $\eta_0$ and $C'$ defined in Corollary \ref{bnfIII}.  
Then $\Omega$ is defined in $\b{|c|<\eta}$ and
\begin{equation}\aa{\Omega-[\p_r N_H]^p}_{\CC^p(\b{|c|<\eta})}\le C'_p\eta^q(\frac{1}{\k\eta})^{\al(p)}+
\aa{[\p_r N_H]^p-\p_r N_H^q}_{\CC^p(\b{|c|<\eta})}
\end{equation}
which is $$\leq \tilde{C}\eta$$ 
since $q\ge (1+2p)\al(p)+1$ --  notice that $\tilde C$ is independent of $C'' \geq C'$.
Finally if $C''$ is sufficiently large (depending on $p,\t$,$H$,$l$, thus on $q$) 
we have that $\tilde{C}\eta\leq \s/2$.
}

By Lemma \ref{pyartly}
$$ \Leb\b{|c|<\eta: |\l{\frac{k}{|k|},\Omega(c)}|<\ep} \lsim (\frac\ep\s)^{\frac1p}\eta^{d-1},$$
hence 
\begin{align*} \Leb\b{|c|<\eta: \Omega(c)\notin DC(\k,\t)}&\lsim (\frac{\k}{\s})^{\frac1p}\eta^{d-1}\\
&\lsim \eta\Leb\b{|c|<\eta} \end{align*}
provided $\kappa$ is sufficiently small. Hence, the set
$$\b{|c|<\eta: \Omega(c)\in DC(\k,\t)}$$
is of positive measure and density $1$ at $0$ as $\k\to0$.
Theorem \ref{mB} now follows from Corollary \ref{kamtori}.

\section{Analytic KAM tori are never isolated. Degenerate BNF and Invariant co-isotropic submanifolds. } \label{sCo-isotropic} 

This section is devoted to the proof of Theorem \ref{mA} and of R\"ussmann's theorem.

Let $q=\a(1)+1$ and assume, after a conjugacy, that  
$$H(\f,r)=N^q(r)+\OO^{q+1}(r)$$
We shall apply Proposition \ref{counterthm} and Corollaries \ref{bnfIII}+\ref{kamtori}
with 
$$q=\al(1)+1,\quad
\t=\t_0\quad \textrm{and}\quad \k=\frac{\k_0}2.$$

Then
$$\Omega(c)+\Lambda(c,\Omega(c))=0$$
and
$$\Omega(c)=\p_rN_H(c)+\OO^{\i}(c).$$

Since $N_H$ is $j$-degenerate
we have
$$\p_v^nN_H(0)=0\qquad \forall n\ge0$$
for any $v\in\Lin(\g=(\g_1,\dots,\g_j))$,
where $\p_v$ is the directional 
derivative in direction $v$. From this we derive that
$$\p_v^n(\o_0+\L(\cdot,\o_0))_{|c=0}=0\qquad \forall n\ge0.$$
Since $s\mapsto \L(\l{s,\g},\o_0)$ is an analytic function in $s \in \R^j$, $s\sim 0$,
it must be identically $0$, hence 
$\Omega(\l{s,\g})$ is identically $\o_0$,
i.e.
$$\Omega(\l{s,\g})\in DC(\k,\t)$$
for all sufficiently small $s$. 

From Corollary \ref{kamtori}
it follows  that for any $c\in \Lin(\g)$ sufficiently small
$H$ has a KAM-torus with frequency $\o_0$ and that
the set of all these tori,
$$\bigcup_{c\in\Lin(\g)}Z_c(\T^d,c),$$
is a $(d+j)$-dimensional subvariety.
This completes the proof of Theorem \ref{mA}.

\medskip

When $N_H$ is $(d-1)$-degenerate, then
$$\p_r N_H(c)=\m(\l{c,\o_0})\o_0$$
where $\m(t)=1+\OO(t)$ is a formal power series in one variable.

Since 
$$\m(\l{c,\o_0}) \o_0+\Lambda(c,\m(\l{c,\o_0})\o_0)=\OO^{\i}(c),$$
taking $c=t\o_0$, we have (assuming $\o_0$ is a unit vector) 
\be\label{3.6}\m(t)\o_0+\Lambda(t\o_0,\m(t)\o_0)=0\ee
modulo a term in $\OO^{\i}(t)$.
Since, by Proposition \ref{counterthm} (iii), 
the lefthand side is analytic in $t\o_0$ and
$\mu$ we obtain from any of the equations (\ref{3.6})  that
$\mu(t)$ is a convergent power series. Then
$$t\mapsto \m(t)\o_0+\Lambda(t\o_0,\m(t)\o_0)$$
is analytic for $t\sim 0$, hence identically zero.
We derive from this that
$$\Omega(c)= \m(\l{c,\o_0})\o_0,$$
i.e.
$$\Omega(c)\in DC(\k,\t)$$
for all sufficiently small $c$. 
R\"ussmann's theorem now follows from Corollary \ref{kamtori} as in the proof
of Theorem \ref{mA}.

\section{Smooth non KAM stable Diophantine tori}\label{sSmooth}

\subsection{The smooth case in $d=2$ degrees of freedom. }  \label{secd2}
We let $H$ be as in the introduction but we only assume that $H$ is of class $C^\infty$. 
The results of sections \ref{sBNF} and  \ref{sKAMstability} remain valid but we will only have $C^\infty$ instead of analytic functions. For example we will not be able to use the analyticity dependance of $\Lambda$ on the first variable, that is crucial in the degenerate situation  as shown in Section \ref{sCo-isotropic}.  

But let us examine the frequency function $\Omega(c)$ 
given by Corollary \ref{bnfIII}. It is a smooth function from a neighborhood of $0$ in $\R^2$ to $\R^2$, such that $\Omega(0)=\omega_0 \in  DC(\k_0,\t_0)$. We restrict to a neighborhood where $ \omega_{0,i}/2 \leq \Omega_i(c) \leq 2\omega_{0,i}$. A vector $\Omega(c)=(\w_1,\w_2) \in \R^2$ then satisfies a Diophantine condition for flows as in (\ref{cd}) as soon as $\a(c)=\w_1/\w_2$ satisfies a  Diophantine condition for diffeomorphisms, of the form $|k\a+l| \geq C \k_0/ |k|^{\tau_0}$, with $C$ some constant that only depends on $\omega_0$. 
 
 If we restrict $\a(\cdot)$ to any segment $I$ that goes through $0$ we get a smooth real function such that $\a(0)$ satisfies the latter Diophantine condition for diffeomorphisms. As explained in Proposition 3 of \cite{FK}, the one dimensional phenomenon here is that, provided $\k_0$ and $\tau_0$ are 
relaxed to $\k<\k_0/2$ and $\tau =\tau_0+1$, then for a positive measure set of points in $I$, $\a$ satisfies a Diophantine condition for diffeomorphisms.  Indeed $\a(0)$ is a density point in $DC(C\k,\tau)$ and the alternative for $\a$ are {\it (i) : $\a$ is locally constant $\a\sim \a_0 \in DC(C\k,\tau)$ on a neighborhood of $0$ in $I$, or (ii) : $\a$ is not locally constant and it takes a positive measure set of values in  $DC(C\k,\tau)$ on a positive measure set of points in $I$.} 
 
  We conclude that for a positive measure set of $c$ in any neighborhood of $0$, $\Omega(c) \in  DC(\k,\t)$, so that Corollary \ref{kamtori} yields the following result, that can be coined {\it Herman's last geometric theorem} since it is just the flow version of the disc diffeomorphisms theorem treated in \cite{FK}.

\begin{Principal}  Let $H \in C^\infty(\T^2\times\R^2)$ and assume that  $\bT^2\times\b{0}$ is a KAM torus. Then  $\bT^2\times\b{0}$ is  accumulated by a positive measure set of 
smooth KAM tori with Diophantine translation vectors.
\end{Principal}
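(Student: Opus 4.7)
The plan is to mimic the strategy of Section \ref{sKAMstability} but, because the analytic dependence of $\Lambda$ on the action variable is no longer available, to replace the transversality argument by the one dimensional argument of \cite{FK}. First I would verify that the KAM counter term theorem (Proposition \ref{counterthm}) admits a $C^\infty$ version: the cut-off operator $\PP$ of Section \ref{s32} is defined purely in terms of a smooth cut-off $l$ in the frequency variable, and the estimate of Lemma \ref{flat} is already expressed in $C^s$ norms (with parameter $\omega$). Running the iterative scheme in a $C^\infty$ setting therefore yields a smooth generating function $f$ and a smooth counter term $\Lambda$ on a neighborhood of $\omega_0$. Corollary \ref{bnfIII} then produces a $C^\infty$ frequency map $\Omega: U\subset \R^2\to \R^2$ defined on a neighborhood $U$ of $0$ with $\Omega(0)=\omega_0$, and Corollary \ref{kamtori} guarantees that, whenever $\Omega(c)\in DC(\kappa,\tau)$, the exact symplectic change of coordinates generated by $f(\cdot,\cdot,c,\Omega(c))$ conjugates $H$ to a Hamiltonian that admits the invariant KAM torus $\{r=c\}$ with translation vector $\Omega(c)$.

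Next I would reduce the measure estimate to a one dimensional question. Shrinking $U$ if necessary, one may assume $\omega_{0,i}/2\le \Omega_i(c)\le 2\omega_{0,i}$, so that $\Omega(c)\in DC(\kappa,\tau)$ as a flow frequency is equivalent (up to universal constants depending only on $\omega_0$) to the rotation number $\alpha(c):=\Omega_1(c)/\Omega_2(c)$ satisfying a Diophantine condition for circle diffeomorphisms of the form $|k\alpha(c)+l|\ge C\kappa/|k|^{\tau}$. Choose any segment $I\subset U$ through $0$; restricting $\alpha$ to $I$ gives a smooth real function $\alpha:I\to\R$ with $\alpha(0)=\omega_{0,1}/\omega_{0,2}$ Diophantine.

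The heart of the proof is then the one dimensional dichotomy of \cite{FK}: for a smooth function $\alpha$ on an interval taking a Diophantine value at a point, either $\alpha$ is locally constant equal to that Diophantine value (in which case every point of a subinterval through $0$ gives a KAM torus with translation vector $\R\omega_0\cap DC(\kappa,\tau)$, yielding a positive measure of tori), or $\alpha$ is not locally constant and then, by the density point argument of \cite[Prop.\ 3]{FK} applied with slightly relaxed exponents $\kappa<\kappa_0/2,\ \tau=\tau_0+1$, the preimage $\alpha^{-1}(DC(C\kappa,\tau))$ has positive Lebesgue measure in every neighborhood of $0$ in $I$. In either case one obtains a positive measure set of $c$ near $0$ with $\Omega(c)\in DC(\kappa,\tau)$, and Corollary \ref{kamtori} provides the accumulating positive measure family of smooth KAM tori with Diophantine translation vectors.

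The main technical obstacle I expect is the smooth version of Proposition \ref{counterthm}: one has to rerun its inductive proof (deferred in this paper to the last section) in a Whitney/$C^\infty$ framework with parameters, tracking losses in $C^s$ norms under small divisor truncations. Once this is in place the rest is a direct combination of the arguments already developed in the analytic case with the one dimensional Herman mechanism of \cite{FK}; note also that the proof does not use analyticity of $\Lambda$ in $c$, which is precisely what makes the argument available in $d=2$ but not (without further ideas) for $d\ge 3$ in the smooth category.
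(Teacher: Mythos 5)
Your proposal is correct and follows essentially the same route as the paper's proof in Section \ref{secd2}: running the counter term theorem in the $C^\infty$ category to obtain a smooth frequency map $\Omega$, reducing to the ratio $\alpha(c)=\Omega_1(c)/\Omega_2(c)$ on a segment through the origin, and invoking the one dimensional dichotomy of \cite[Prop.\ 3]{FK} (locally constant versus positive measure of Diophantine values with slightly relaxed constants) before concluding via Corollary \ref{kamtori}. Your observation that the smooth adaptation of Proposition \ref{counterthm} is the only serious technical point also matches the paper, which states that the results of the relevant sections carry over with $C^\infty$ in place of analytic regularity.
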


Note that we do not speak about KAM stability because the density one requirement does not hold in this 'twistless' situation.

\subsection{A smooth counter-example in $d\geq 4$ degrees of freedom.}

A vector $\a=(\a_1,\a_2) \in \R^2$ is said to be {\it Liouville}
if $(k,\a)=0 \implies k=(0,0)$ and if for any $N>0$ there exists $k \in \Z^2-\{0,0\}$ such that $|(k,\a)|<\|k\|^{-N}$.

We call a sequence of intervals (open or closed or halfopen) $I_n=(a_n,b_n) \subset ]0,\i[$ 
an increasing cover of the half line if :
\begin{enumerate} 
 \item  $\lim_{n \to -\infty} a_n =0$
 \item  $\lim_{n \to +\infty} a_n =+\infty$
\item $a_n<b_{n-1}<a_{n+1}<b_n$
\end{enumerate}

\begin{Prop} \label{colimacon} Let $(\omega_1,\omega_2,\omega_3) \in \R^3$ be fixed. 
For every $\eps>0$ and every $s\in \N$, there exist an increasing cover $(I_n)$ of $]0,\i[$ and functions $f_i \in C^\infty(\R,(0,1))$, $i=1,2,3$, such that $\|f_i\|_s<\eps$ and 
\begin{itemize} 

\item For each  $n\in \Z$, the functions $f_1$ and $f_2$ are  constant on $I_{3n}$ :  
$${f_1}_{| I_{3n}}\equiv \bar{f}_{1,n}, \quad {f_2}_{| I_{3n}}\equiv \bar{f}_{2,n}$$ 
\item For each  $n\in \Z$, the functions $f_1$ and $f_3$ are  constant on $I_{3n+1}$ :
$${f_1}_{| I_{3n+1}}\equiv \bar{f}_{1,n}, \quad {f_3}_{| I_{3n+1}}\equiv \bar{f}_{3,n}$$ 
\item For each  $n\in \Z$, the functions $f_2$ and $f_3$ are  constant on $I_{3n-1}$ :
$${f_2}_{| I_{3n-1}}\equiv \bar{f}_{2,n}, \quad {f_3}_{| I_{3n-1}}\equiv \bar{f}_{3,n-1}$$ 
\item The vectors $(\bar{f}_{1,n}+\omega_1,\bar{f}_{2,n}+\omega_2)$, $(\bar{f}_{1,n}+\omega_1,\bar{f}_{3,n}+\omega_3)$ and  $(\bar{f}_{2,n}+\omega_2,\bar{f}_{3,n}+\omega_3)$ are Liouville.
\end{itemize}
\end{Prop}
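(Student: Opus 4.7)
The plan is to construct the constants $\bar f_{i,n}$ inductively in a small interval $(0,\eta)\subset(0,1)$ via a Baire/density argument, and then to choose the cover $(I_n)$ with non-overlap regions long enough that the smooth interpolations between consecutive constant values have arbitrarily small $C^s$-norm.

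For the constants, the key observation is that a pair $(\alpha_1,\alpha_2)\in\R^2$ with $\alpha_2\ne 0$ is Liouville in the sense of the statement iff $\alpha_1/\alpha_2$ is an irrational Liouville number, and the set of Liouville numbers is a dense $G_\delta$ subset of $\R$. Hence, for any fixed $y\in\R$ with $y+\omega_j\ne 0$, the set of $x\in\R$ such that $(x+\omega_i,y+\omega_j)$ is Liouville is a dense $G_\delta$ in every open subinterval, in particular in $(0,\eta)$. Fixing $\eta<\eps/2$, I would choose the constants in the order $\bar f_{1,0},\bar f_{2,0},\bar f_{3,0}$, then for forward indices $n=1,2,\dots$ successively $\bar f_{2,n},\bar f_{1,n},\bar f_{3,n}$, and for backward indices $n=-1,-2,\dots$ successively $\bar f_{3,n},\bar f_{1,n},\bar f_{2,n}$. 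One checks that at each step the new constant is constrained by exactly one Liouville condition involving an already-fixed constant (for instance $\bar f_{2,n}$ with $n\ge 1$ is constrained only by Liouville with the earlier $\bar f_{3,n-1}$; the other Liouville condition pairing it with $\bar f_{1,n}$ is satisfied at the next step). Each such condition cuts out a dense $G_\delta$ in $(0,\eta)$, so the induction proceeds and the few ``bad'' values where $\bar f_{i,n}+\omega_i = 0$ are avoided automatically.

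For the cover, once the constants are fixed, I would pick $\cdots<a_{-1}<b_{-2}<a_0<b_{-1}<a_1<b_0<\cdots$ with $a_n\to\infty$ and $a_{-n}\to 0$, such that the non-overlap portion of each $I_n$ has length $L_n$ with $L_n\to\infty$ rapidly as $|n|\to\infty$. A direct check on the overlaps shows the constraints from both adjacent intervals are consistent: on $I_{3n}\cap I_{3n+1}$ all three functions take the common values $(\bar f_{1,n},\bar f_{2,n},\bar f_{3,n})$, on $I_{3n-1}\cap I_{3n}$ they take $(\bar f_{1,n},\bar f_{2,n},\bar f_{3,n-1})$, and on $I_{3n+1}\cap I_{3n+2}$ they take $(\bar f_{1,n},\bar f_{2,n+1},\bar f_{3,n})$. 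On the non-overlap middle portion of each $I_n$, exactly one function varies between two prescribed values differing by at most $\eta$; I would interpolate using a fixed smooth monotone profile $\chi\in C^\infty(\R,[0,1])$ with $\chi=0$ on $(-\infty,0]$ and $\chi=1$ on $[1,\infty)$, rescaled to length $L_n$, which gives $C^s$-norm bounded by $\eta\,\|\chi\|_{C^s}L_n^{-s}$. Since the transition regions of each $f_i$ are pairwise disjoint intervals and $f_i$ is constant elsewhere, choosing the $L_n$ sufficiently large makes $\|f_i\|_{C^s}<\eps$, and all $f_i$ take values in $(0,\eta)\subset(0,1)$.

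The main subtlety is that each constant $\bar f_{i,n}$ participates in two Liouville conditions of the proposition, linking it both ``forward'' and ``backward'' in the index; the ordering above resolves this by arranging that each new constant is only constrained by Liouville compatibility with a \emph{single} earlier one, deferring the second condition to the next step. The remaining construction is a standard smooth patching: on overlaps everything is fixed by the shared constant values, and on the complementary transition regions only one function interpolates with an arbitrarily slow slope thanks to the freedom in choosing the $L_n$.
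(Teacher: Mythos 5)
Your plan shares with the paper the idea of producing the constants via a density argument and then smoothly interpolating, but the two proofs control the $C^s$ norm in opposite ways: the paper keeps the transition widths fixed (essentially determined by the $u_n$) and makes the \emph{coefficients} $\bar f_{i,n}$ decay faster than any power of the bump width, whereas you keep the coefficients of size $O(\eta)$ and try to make the \emph{transition lengths} $L_n$ tend to infinity. That second idea cannot work on the whole index range.

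The gap is in the sentence ``pick the cover ... such that the non-overlap portion of each $I_n$ has length $L_n$ with $L_n\to\infty$ rapidly as $|n|\to\infty$.'' This is impossible for $n\to -\infty$: the increasing-cover conditions force $a_n\to 0$ and, since $b_{n-1}<a_{n+1}$, also $b_n\to 0$; hence the non-overlap segment $(b_{n-1},a_{n+1})$ of $I_n$, and indeed $I_n$ itself, has length tending to $0$, not $\infty$. Consequently the interpolation bumps near $0$ occur over arbitrarily short intervals, and with $|\bar f_{i,n+1}-\bar f_{i,n}|$ of order $\eta$ their $C^s$ norms blow up. In fact, without further control the $f_i$ need not even extend continuously to $x=0$: as $x\to 0^+$, $f_1(x)$ visits the values $\bar f_{1,n}$ for $n\to -\infty$, and your Baire choice in $(0,\eta)$ gives no reason for these to converge.

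The fix is exactly what the paper's condition $(B_\eta)(n)$ encodes: in addition to the Liouville/density constraints, you must impose that the values (or at least the consecutive differences $|\bar f_{i,n+1}-\bar f_{i,n}|$) decay super-polynomially fast relative to the shrinking transition widths as $n\to -\infty$. This is fully compatible with your Baire argument, since the set of $x$ making a given pair Liouville is a dense $G_\delta$ in \emph{every} open subinterval, and in particular in the arbitrarily small interval around $\bar f_{i,n+1}$ (or around $0$) where you need to place $\bar f_{i,n}$. The paper builds this in from the start by writing $f_1=\sum_n \bar f_{1,n}\,g_n$ with fixed-profile bumps $g_n$ of width $\sim u_n$ and bounding $|\bar f_{1,n}|$ by a quantity that dominates $u_n^{-s}$ for the relevant $s$; the coefficient constraint is an open condition, so the Liouville requirements can be added on top. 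Your forward direction $n\to+\infty$ is fine with the long-transition mechanism, and your ordering of the inductive choices (and the observation that the Liouville condition on a pair reduces to a Liouville condition on a ratio) is correct in spirit; the missing ingredient is the decay of the coefficients near $0$.
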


\begin{Rem}
It follows that  $f_1,f_2,f_3$ are $\cC^{\i}$-flat at zero.
\end{Rem}

\begin{proof} We want to construct $f_1(\cdot)$ such that $f_1$ is constant equal to $\bar{f}_{1,n}$ on $[a_{3n},b_{3n+1}]$ for every $n\in \Z$. 
The crucial observation in the construction of $f_1$ is that the segments $[a_{3n},b_{3n+1}]$ are mutually disjoint.

We will then construct similarly $f_2$ and $f_3$ and explain why the Liouville conditions can also be required in addition. 

Fix $\zeta \in C^\infty(\R,[0,1])$ be such that $\zeta(x)=0$ if $x\leq -1$ and $\zeta(x)=1$ if $x\geq 0$. 
Define a sequence $u_n>0$ such that  $a_{3n}-u_n > b_{3n-2}$ and $b_{3n+1}+u_n < a_{3n+3}$. 
Observe that 
$$g_n(x):= \zeta(u_n^{-1}(x-a_{3n}))-\zeta(u_n^{-1}(x-b_{3n+1}-u_n))$$
satisfies $g_n(x)=1$ if $x \in [a_{3n},b_{3n+1}]$ and $g_n(x)=0$ for $x> a_{3n+3}> b_{3n+1}+u_n$ and for $x<b_{3n-2}<a_{3n}-u_n$. Hence the function 
$$f_1=\sum_{n\in \Z} \bar{f}_{1,n} g_n$$
solves our problem and by just requiring the bound $(B_\eta)(n) : |\bar{f}_{1,n}|< \eta u_n^n $ for every $n$ and supposing that $\sum |u_n|<\infty$ we get that for any $s$ and any $\eps$ one can choose $\eta$ to guarantee that the resulting function $\|f_1\|_s<\eps/3$. We define the other functions similarly and then add the Liouville constraints without any problem since the condition $(B_\eta)(n)$ is  open.  \end{proof}

Given a cover $(I_n)$ as in Proposition \ref{colimacon}, we can define  another cover $(I'_n)$ such that $I'_n$ is strictly contained in $I_n$ for every $n$. 

\subsection{Proof of theorem \ref{mC}}

Define
$$H_0(\f,r)=\l{\omega_0,r}+f_1(r_4)r_1+f_2(r_4)r_2+f_3(r_4)r_3$$
where $f_1,f_2,f_3$ are as in Proposition \ref{colimacon} and $\omega_0=(\o_1,\o_2,\o_3,\o_4)$. 

Notice that as a consequence of Proposition \ref{colimacon} we have that on each $I_n$ two of the coordinates of $(f_1+\o_1,f_2+\o_2,f_3+\o_3)$ are constant and form a Liouville vector. We denote $\hat{I}_n=\T^4\times \R^3 \times I_n$. Let $\cH$ be the set of $H\in C^\infty(\T^4\times\R^4)$ such that $H$ does not depend on $\f_4$. For $H \in \cH$ the flow $\Phi_H^t$ leaves $r_4$ invariant.  We will show how to make arbitrarily small perturbations inside $\cH$ of $H_0$  on any $\hat{I}_n$ that create huge oscillations of the corresponding flow in two of the three directions $r_1,r_2,r_3$.   These perturbations will actually be compositions inside $H_0$ by exact symplectic maps obtained from  suitably chosen generating functions.
Iterating the argument gives a construction by successive conjugations scheme similar to \cite{AK}. The difference here is that the conjugations will be applied in a "diagonal" procedure to include more and more intervals $I_n$ into the scheme. Rather than following this diagonal scheme which would allow to define the conjugations explicitly at each step, we will actually adopt a $\GG^\delta$-type construction {\it \`a la Herman} (see \cite{FH}) that makes the proof much shorter and gives slightly more general results. 

Let $\cU$ be the set of exact symplectic diffeomorphisms $U$ of $\T^4\times \R^4$ such that $U(\f,r)=(\psi,s)$ satisfies $s_4=r_4$. In particular,  $U\in \cU$ implies that $U(\hat I_n)=\hat I_n$ for any $n \in \Z$.

\begin{Prop} \label{main.liouville} Let $I={I}_{n}$ for some $n$. For any $\eps>0, s \in \N, \Delta>0, A>0$ and any $V \in \cU$, there exist $U \in \cU$ and $T>0$ such that there exist $(i_1,i_2)\in \{1,2,3\}$,
distinct,  such that for $i=i_1$ and $i=i_2$ we have 
\begin{enumerate}
\item $U={\rm Id} { \ \rm on \ }  {\hat{I}}^c$
\item ${\|H_0\circ U \circ V -H_0\circ V \|}_s<\eps$
\item $\displaystyle { \sup_{0< t < T} | (\Phi_{H_0\circ U\circ V}^t(p))_{4+i_1}|>A,}$ for any $p\in \widehat{I'}$ such that $\|p\|\leq \Delta$
\item $\displaystyle  \sup_{0< t < T} | (\Phi_{H_0\circ U\circ V}^{-t}(p))_{4+i_2}|>A,$ for any $p\in \widehat{I'}$ such that $\|p\|\leq \Delta$
\end{enumerate}
\end{Prop}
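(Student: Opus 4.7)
My plan is to construct $U \in \cU$ as a small exact-symplectic perturbation of the identity supported in $\hat I$, via a resonant generating function that exploits the Liouville structure guaranteed by Proposition \ref{colimacon}. First I identify the two indices $j_1, j_2 \in \{1,2,3\}$ (depending only on $I=I_n$) for which $f_{j_1}$ and $f_{j_2}$ are constant on $I$ with values $\bar f_{j_1},\bar f_{j_2}$, so that by the last bullet of Proposition \ref{colimacon} the restricted frequency vector $\beta := (\omega_{j_1}+\bar f_{j_1},\,\omega_{j_2}+\bar f_{j_2}) \in \R^2$ is Liouville. On $\hat I$ the unperturbed Hamiltonian is linear in the actions, $H_0 = \beta_1 r_{j_1} + \beta_2 r_{j_2} + (\omega_{j_3}+f_{j_3}(r_4))r_{j_3} + \omega_4 r_4$, and its flow acts in the $(\varphi_{j_1},\varphi_{j_2})$-plane as pure translation by $\beta$.

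Fix a bump function $\lambda \in C^\infty(\R,[0,1])$ with $\operatorname{supp}\lambda \subset I$ and $\lambda\equiv1$ on $I'$. For integers $k=(k_1,k_2) \in \Z^2\setminus\{0\}$ and a small amplitude $a>0$ to be chosen, I define $U$ via the type-two generating function
\[
S(\varphi,s) = \langle\varphi,s\rangle + \lambda(s_4)\,a\,\cos\!\bigl(2\pi(k_1\varphi_{j_1}+k_2\varphi_{j_2})\bigr).
\]
Then $U\in\cU$ and $U=\operatorname{Id}$ off $\hat I$. Since $H_0$ is linear in $r_1,r_2,r_3$, independent of $\varphi$, and $U$ keeps $s_4=r_4$, a direct substitution gives
\[
H_0\circ U = H_0 + 2\pi a\,\lambda(r_4)\,\eta\,\sin\!\bigl(2\pi(k_1\varphi_{j_1}+k_2\varphi_{j_2})\bigr), \qquad \eta := k_1\beta_1+k_2\beta_2.
\]
In particular $\|H_0\circ U-H_0\|_{C^s}\lesssim a|\eta||k|^s$, and a standard composition estimate yields $\|H_0\circ U\circ V-H_0\circ V\|_{C^s}\le C_V\,a|\eta||k|^s$ for a constant $C_V$ depending on $V$ and the compact set containing $V(\hat I'\cap\{|p|\le\Delta\})$. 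Because $\beta$ is Liouville, I select $k$ with $|k|\le N$ and $|\eta|\le N^{-M}$ for $M$ arbitrarily large, and then tune $a$ so that the above $C^s$-norm is below $\eps$ while $a|k_\ell|$ is much larger than any prescribed threshold; this is the core mechanism, namely that the Liouville property decouples smallness of the perturbation from the size of the induced excursion.

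Integrating the equations of motion of $H_0\circ U$, the angles evolve freely, $\varphi_{j_\ell}(t)=\varphi_{j_\ell}(0)+t\beta_\ell$, so that the combined phase $\theta(t) := 2\pi(k_1\varphi_{j_1}(t)+k_2\varphi_{j_2}(t))$ sweeps the circle at the slow angular velocity $\eta$, while $r_4$ is conserved. Integrating $\dot r_{j_\ell} = -(2\pi)^2 a\lambda(r_4)\eta k_\ell\cos\theta$ yields the closed form
\[
r_{j_\ell}(t)-r_{j_\ell}(0) = -2\pi a\lambda(r_4) k_\ell\bigl[\sin\theta(t)-\sin\theta(0)\bigr].
\]
Choosing $T = 1/|\eta|$ the sine argument sweeps at least one full period, and both $\sup_{0<t<T}|r_{j_\ell}(t)-r_{j_\ell}(0)|$ and $\sup_{0<t<T}|r_{j_\ell}(-t)-r_{j_\ell}(0)|$ are $\ge 2\pi a|k_\ell|$ uniformly in the initial phase. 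Using the conjugacy $\Phi^t_{H_0\circ U\circ V} = V^{-1}\!\circ\Phi^t_{H_0\circ U}\!\circ V$ together with the properness and smoothness of the $r_4$-preserving diffeomorphism $V^{-1}$, these excursions in the intermediate $(r_{j_1},r_{j_2})$-plane transfer through $V^{-1}$ to excursions of the actual flow in at least two distinct action coordinates, which we label $i_1, i_2$; by taking $a|k_\ell|$ dominating any $V$-dependent distortion, both excursions exceed $A$ uniformly over $p\in\hat I'\cap\{|p|\le\Delta\}$. The main technical obstacle is precisely this last transfer through $V^{-1}$: arbitrary $V\in\cU$ may mix the $r$-coordinates in complicated ways, so the choice of $(i_1,i_2)$ must be allowed to depend on $V$, and the Liouville over-smallness of $|\eta|$ relative to $|k|^{s-1}$ is what provides the elbow-room to arrange the $C^s$-bound and the uniform excursion simultaneously.
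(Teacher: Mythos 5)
Your construction is essentially the paper's: a resonant exact‑symplectic change of variables supported in $\hat I$, generated by a single resonant Fourier mode in the two angle variables whose associated frequency pair is Liouville, so that the change in the Hamiltonian is proportional to $k_1\beta_1+k_2\beta_2$ (arbitrarily small) while the change in the actions is proportional to $(k_1,k_2)$ (arbitrarily large). The computation $H_0\circ U=H_0+2\pi a\lambda(r_4)\eta\sin\theta$, the $C^s$ estimate $\lesssim a|\eta||k|^s$, and the use of the Liouville property to decouple these two sizes all match the paper. Where you differ is presentational: you keep a separate amplitude $a$ and integrate the Hamiltonian ODE in closed form ($r_{j_\ell}(t)-r_{j_\ell}(0)=-2\pi a\lambda k_\ell[\sin\theta(t)-\sin\theta_0]$, $T=1/|\eta|$), whereas the paper takes $|q_\ell|>A+\Delta$ directly and argues via the minimality of the translation $(\bar F_1,\bar F_2)$ on the invariant tori $U^{-1}(\{s\}\times\T^4)$. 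Your explicit integration is arguably cleaner and gives uniform forward and backward excursions in both $r_{j_1}$ and $r_{j_2}$ at once.

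The one place where your write‑up falls short of a proof is the very step you flag: the transfer through $V$. The paper avoids it by observing \emph{first} that since $V$ preserves $\hat I$ and $\Phi^t_{H_0\circ U\circ V}=V^{-1}\circ\Phi^t_{H_0\circ U}\circ V$, it suffices to prove the statement for $V=\mathrm{Id}$, and then applying the $V=\mathrm{Id}$ case with constants $\eps'\ll\eps$, $A'\gg A$ and radius $\Delta'=\sup_{\|p\|\le\Delta}\|V(p)\|$. What this reduction buys you concretely is: by compactness of $\T^4$, conservation of $r_4$ and $r_{j_3}$, and properness of the fixed diffeomorphism $V^{-1}$ on the tube $\T^4\times\R^2_{(j_1,j_2)}\times[-\Delta',\Delta']\times I'$, a sufficiently large excursion of $\Phi^t_{H_0\circ U}(V(p))$ forces $\max_{i\in\{1,2,3\}}|(\Phi^t_{H_0\circ U\circ V}(p))_{4+i}|>A$. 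Your phrase \lq\lq by taking $a|k_\ell|$ dominating any $V$-dependent distortion\rq\rq\ gestures at this, but properness alone does not deliver a \emph{fixed} pair $(i_1,i_2)$ working uniformly over $p$ and $t$, since the escape direction of $V^{-1}$ at infinity need not stabilize. You are right that the choice of $(i_1,i_2)$ must be allowed to depend on $V$ (the statement permits this), but an argument pinning it down is still missing from your proposal. To match the paper, do the conjugacy reduction up front, state the $V=\mathrm{Id}$ case with $(\eps',A',\Delta')$, and then invoke properness of $V^{-1}$ on the bounded tube, selecting $(i_1,i_2)$ as any two indices realizing the escape for the returned curve; this is the route the paper intends, and your closed‑form integration slots into it directly in place of the minimality argument.
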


\begin{proof} Since $V$ preserves $\hat I$ and since $\phi^t_{H_0\circ U\circ V}$ is conjugate to $\Phi^t_{H_0\circ U}$ it is sufficient to prove the proposition for $V={\rm Id}$. Indeed, given $V$ such that $V \hat I=\hat I$, and applying the Proposition with $V={\rm Id}$ and with constants $\eps'\ll \eps$ and $A'\gg A$ yields $2$ and $3$ including $V$.

Assume hereafter that $I=I_{3n}$, the other cases being exactly similar. Let $a \in C^\infty(\R)$ be such that 
$a(\xi)=0$ if $\xi \notin I$ and $a(\xi)=1$ if $\xi \in I'$ (remember that $I'$ is strictly included in $I$).  

Let $\bar{f}_1:={f_1}_{| I}, \bar{f}_2:={f_2}_{| I}$ and $\bar{F}_1=\bar{f}_1 +\omega_1$,  $\bar{F}_2=\bar{f}_2 +\omega_2$. Let $(q_1,q_2) \in \Z^2-\{0,0\}$ such that $|q_1|>A+\Delta$ and $|q_2|>A+\Delta$ and $|q_1\bar F_1+q_2 \bar F_2|<\eta \min(q_1^{-2s},q_2^{-2s})$ where $\eta= \eps /((2\pi)^{s+1} \|a\|_s)$. 

Define the following generating function $k\in \cH$ 
$$k(\psi,r)=a(r_4) \sin(2\pi (q_1\psi_1+q_2\psi_2))$$
and let $U=(\Phi,R)\in \cU$ be the symplectic diffeomorphism associated to $k$. Then
$R(\f,r)$ equals
$$(r_1+ 2\pi q_1 a(r_4) \cos(2\pi(q_1\f_1+q_2\f_2)),r_2+ 2\pi q_2 a(r_4) 
\cos (2\pi(q_1\f_1+q_2\f_2)),r_3,r_4)$$
so that $U={\rm Id} { \ \rm on \ }  {\hat{I}}^c$ and
$H_0\circ U(\f,r)$ equals 
$$H_0(r)+2\pi a(r_4) \left(q_1(f_1(r_4)+\omega_1) +q_2(f_2(r_4)+\omega_2) \right) 
\cos (2\pi(q_1\f_1+q_2\f_2)).$$
Hence $H_0\circ U(\f,r)- H_0(r)=h(r,\f)$ with $h\equiv 0$ if $r_4 \notin I$ and if $r_4\in I$ we have 
that  $h(r,\f)=2\pi a(r_4) (q_1\bar F_1+q_2 \bar F_2)  \cos (2\pi(q_1\f_1+q_2\f_2))$
thus the required $\|h\|_s<\eps$. 

On the other hand we have that on $\hat I$ the flow $\Phi_{H_0}^t$ is completely integrable with tori 
$\cT_r= \{r\}\times \T^4$ 
carrying the frequencies $(\bar{F}_1,\bar F_2,F_3(r_4),\omega_4)$. Recall that $\bar{F}_1$ and $\bar F_2$ are independent over $\Z$, that is, the dynamics of the translation flow $T^t_{\bar F_1,\bar F_2}$ is minimal.  But under the change of variable $U$ the torus $\cT_r$ for $r_4 \in I'$  becomes $\cT'_r=\{(r_1-2\pi q_1\cos (2\pi(q_1\f_1+q_2\f_2)), r_2-2\pi q_2 \cos (2\pi(q_1\f_1+q_2\f_2)),r_3,r_4)  :   (\f_1,\ldots,\f_4) \in  \T^4 \}$. Also, the change of variable is such that $(\Phi(\f,r))_j=\f_j$ for $j=1,2,3$. All this implies the third claim of Proposition \ref{main.liouville} since we took  $|q_1|>A+\Delta$ and $|q_2|>A+\Delta$.  \end{proof}

It is easy now to deduce Theorem \ref{mC} and in fact a stronger version of it.  Define for this purpose $\cU_0$ the subset of $U \in \cU$ such that $U-{\rm Id}=\cO^\infty(r_4)$ and $\cH_0$ to be the set of hamiltonians of the form $H_0\circ U, U\in \cU_0$. Finally we denote $\bar{\cH_0}$ the closure in the $C^\infty$ topology of $\cH_0$.

\begin{Prop} \label{theo.gdelta} Let $\cD$ be the set of hamiltonians $H \in \bar{\cH_0}$ such that 
\begin{equation} \label{eq.diffuse.prop} \limsup \|\Phi^t_{H} (p) \| = \infty \end{equation}
for any $p=(\f,r)$ satisfying $r_4\neq 0$. More precisely, for each $p$ such that $p_8\neq 0$ we have that there exist $(i_1,i_2)\in \{1,2,3\}$,distinct, such that for $i=i_1$ and $i=i_2$ it holds that 
\begin{equation} \label{eq.diffuse2} \limsup_{t \to \pm \infty } (\phi^t_{H}(p))_{4+i_1} = +\infty, \quad \liminf_{t \to \pm \infty} (\phi^t_{H} (p))_{4+i_2} = -\infty  \end{equation}

Then $\cD$ is  a dense (in the $C^\infty$ topology) $\GG^\delta$ subset of $\bar{\cH_0}$ 
\end{Prop}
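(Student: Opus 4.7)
The plan is to realize $\cD$ as a countable intersection of open dense subsets of $\bar\cH_0$ and invoke the Baire category theorem, $\bar\cH_0$ being a closed subspace of the Fr\'echet space $C^\infty(\T^4\times\R^4)$. Fix compact refinements $I'_n \Subset I_n$ as in Section \ref{sSmooth}. For each $n \in \Z$, $A, \Delta \in \N$, and each ordered pair $(i_1, i_2)$ of distinct elements of $\{1,2,3\}$, set
$$K_{n, \Delta} = \hat I'_n \cap \{p : \|p\| \leq \Delta\},$$
which is compact, and let $\cO^{i_1, i_2}_{n, A, \Delta}$ be the set of $H \in \bar\cH_0$ such that for every $p \in K_{n, \Delta}$ there exists $T > 0$ with $(\Phi^t_H(p))_{4+i_1} > A$ and $(\Phi^t_H(p))_{4+i_2} < -A$ each attained on both $[0, T]$ and $[-T, 0]$. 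Put $\cO_{n, A, \Delta} = \bigcup_{(i_1, i_2)} \cO^{i_1, i_2}_{n, A, \Delta}$.

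The set $\cO^{i_1, i_2}_{n, A, \Delta}$ is open: given $H$ in it, continuity of the flow in $(H, p)$ together with compactness of $K_{n, \Delta}$ yields a single $T$ valid uniformly for all $p \in K_{n, \Delta}$, and the four strict inequalities on the compact space-time slab $[-T, T] \times K_{n, \Delta}$ persist under $C^1$-small perturbations of $H$. Density of $\cO_{n, A, \Delta}$ in $\bar\cH_0$ reduces, since $\cH_0$ is dense in $\bar\cH_0$ by definition, to approximating an arbitrary $\tilde H = H_0 \circ V \in \cH_0$. Proposition \ref{main.liouville} applied with $I = I_n$, thresholds $A, \Delta$, and tolerance $\eps$ produces $U \in \cU$ with $\|H_0 \circ U \circ V - \tilde H\|_s < \eps$, a time $T$, and a pair $(i_1, i_2)$ realizing the oscillation conditions uniformly on $K_{n, \Delta}$. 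Since $U$ equals the identity outside $\hat I_n$ and $I_n$ is bounded away from $r_4 = 0$, the map $U \circ V$ lies in $\cU_0$, so $H_0 \circ U \circ V \in \cH_0 \subset \cO^{i_1, i_2}_{n, A, \Delta}$. By Baire, $\bigcap_{n, A, \Delta} \cO_{n, A, \Delta}$ is a dense $G_\delta$ subset of $\bar\cH_0$.

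Any $H$ in this intersection belongs to $\cD$: given $p$ with $r_4 \neq 0$, pick $n$ with $r_4 \in I'_n$ and $\Delta \geq \|p\|$; for each $A \in \N$, the pair $(i_1^A, i_2^A)$ witnessing $H \in \cO_{n, A, \Delta}$ belongs to a finite set, so the pigeonhole principle supplies a single pair $(i_1, i_2)$ used along a cofinal subsequence $A_k \to \infty$, giving $\limsup_{t \to \pm \infty} (\Phi^t_H(p))_{4+i_1} = +\infty$ and $\liminf_{t \to \pm \infty} (\Phi^t_H(p))_{4+i_2} = -\infty$, i.e.\ property \eqref{eq.diffuse2}.

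The main obstacle is a signed, two-sided strengthening of Proposition \ref{main.liouville}, whose stated conclusion only controls $\sup_{0 < t < T} |(\Phi^t(p))_{4+i_1}|$ and $\sup_{0 < t < T} |(\Phi^{-t}(p))_{4+i_2}|$, while the Baire scheme above needs positive excursions of $r_{i_1}$ and negative excursions of $r_{i_2}$ both on $[0, T]$ and on $[-T, 0]$. This strengthening is implicit in the construction: the exact symplectic map $U$ generated by $a(r_4) \sin(2\pi(q_1 \psi_1 + q_2 \psi_2))$ deforms each invariant torus of $H_0 \circ V$ into one whose $r_{i_1}$- and $r_{i_2}$-coordinates sweep through intervals of length of order $q_{i_1}$ and $q_{i_2}$ around the base point, and the Liouville pair $(\bar F_{i_1}, \bar F_{i_2})$ makes the induced translation flow on this deformed torus minimal in both senses of time, so every signed excursion of controlled magnitude is realized on a finite interval $[-T, T]$ once $q_{i_1}, q_{i_2}$, and $T$ are chosen sufficiently large.
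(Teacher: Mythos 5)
Your proposal follows essentially the same Baire-category scheme as the paper: you write $\cD$ as a countable intersection of open sets indexed by $(n,A,\Delta)$ whose density in $\bar{\cH_0}$ is supplied by Proposition \ref{main.liouville}. The organizational difference is minor: you absorb the time parameter $T$ into your open sets through an existential quantifier and recover openness by a compactness argument on $K_{n,\Delta}$, whereas the paper indexes over $T\in\N^*$ explicitly and takes a union over $T$; both are sound. Where your write-up genuinely improves on the paper is that you notice, and address, a real discrepancy: the literal conclusion of Proposition \ref{main.liouville} only controls $\sup_{0<t<T}|(\Phi^t(p))_{4+i_1}|$ and $\sup_{0<t<T}|(\Phi^{-t}(p))_{4+i_2}|$ in absolute value and in one time direction each, while the precise statement \eqref{eq.diffuse2} requires signed excursions ($r_{4+i_1}>A$ and $r_{4+i_2}<-A$) in both time directions. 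You correctly argue that this strengthening is implicit in the construction: the invariant torus $U^{-1}(\cT_s)$ has its $r_{i_1}$- and $r_{i_2}$-coordinates sweeping full intervals of radius $\sim 2\pi q_{i}$ about the base point, and minimality of the Liouville translation flow makes both semi-orbits dense, so both signed extremes are reached in forward and backward time. The pigeonhole step you add at the end, extracting a single pair $(i_1,i_2)$ along a cofinal sequence $A_k\to\infty$, is likewise needed to pass from membership in the intersection to \eqref{eq.diffuse2}; the paper's proof omits both of these points. One small slip: the inclusion ``$\cH_0\subset\cO^{i_1,i_2}_{n,A,\Delta}$'' is not what you intend --- you mean that the particular perturbation $H_0\circ U\circ V$ lies in both $\cH_0$ and $\cO^{i_1,i_2}_{n,A,\Delta}$.
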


\begin{proof} For $n,\Delta,A,T \in \N^*$ and $1\leq i_1< i_2 \leq 3$ let
$ \cD(n,\Delta,A,T,i_1,i_2)$ be the set
$$\left\{  H \in  \bar{\cH_0}  :   
 \sup_{0< t < T} \min_{i=i_1,i_2; j=1,-1}  \min_{p\in \widehat{I'_n} \cap \{\|p\|\leq \Delta\} }| (\phi_{H}^{jt}(p))_{4+i}|>A  \right\}.$$

It is clear that $\cD(n,\Delta,A,T,i_1,i_2)$ are open subsets of  $ \bar{\cH_0}  $ in any $C^s$ topology. On the other hand we have that 
$$\cD = \bigcap_{A \in \N^*} \bigcap_{n \in \N^*} \bigcap_{\Delta \in \N^*}  \bigcup_{T \in \N^*}  \bigcup_{(i_1,i_2)  \in \{(1,2),(1,3),(2,3)\} } \cD(n,\Delta,A,T,i_1,i_2)$$
but Proposition \ref{main.liouville} precisely states that 
$$ \bigcup_{T \in \N^*}  \bigcup_{(i_1,i_2)  \in \{(1,2),(1,3),(2,3)\} } \cD(n,\Delta,A,T,i_1,i_2)$$
 is dense in  $ \bar{\cH_0}  $ in any $C^s$ topology, which ends the proof of the theorem. 
 
 \end{proof}

The same result of Proposition \ref{theo.gdelta}  holds in any Gevrey class $G^\sigma$, for any $\sigma>1$. The proof of the latter fact  follows exactly the same line as the $C^\infty$ case with the following simple modifications. 

\begin{itemize}
\item[-] The compactly supported function $\zeta$ of Proposition \ref{colimacon} is taken to be in $G^\sigma$, as well as the function $a$ in the proof of Proposition \ref{main.liouville}, and $C^s$ norms are replaced with $Gevrey$ norms. 
\item[-] The conditions  $(B_\eta)(n) : |\bar{f}_{j,n}|< \eta u_n^n $  are replaced by  $|\bar{f}_{j,n}| < \eta u_n^{u_n^{-n}}.$ 
\item[-] The Liouville condition on the vectors  $(\bar F_1, \bar F_2)=(\bar{f}_{1,n}+\omega_1,\bar{f}_{2,n}+\omega_2)$  (as well as on $(\bar{f}_{1,n}+\omega_1,\bar{f}_{3,n}+\omega_3)$ and  $(\bar{f}_{2,n}+\omega_2,\bar{f}_{3,n}+\omega_3)$) is replaced by a "super-Liouville" condition of the type $|q_1 \bar F_1+q_2 \bar F_2|\leq e^{-q_1-q_2}$ for infinitely many $(q_1,q_2) \in \Z^2$. 

\end{itemize}

\section{Proof of the KAM counter term theorem}\label{sProof}
The proof of the counter term theorem (Proposition \ref{counterthm}) 
is based on an inductive procedure and will occupy this whole section. 

Let $\PP=\PP_{\k,\t}$ be the cut-off operator defined in section
\ref{s32}. We take $\t>d-1$ and $0<\k<1$. The operator $\PP$ depends on a cut-off function $l$ and 
constants in this section will, 
in general without saying, depend on $l$.
Recall that a function $g$ 
is $(\k,\t)${\it -flat} if
$$\p^\al_\f \p^\beta_z \p^\ga_\o g(\f,z,\o)=0$$
for all multi-indices $\a,\beta,\ga$ whenever $\o\in DC(\k,\t)$.

Let $B$ be a ball  centered at $\o_0$ or, more generally, the intersection of this
unit ball with an affine subspace of  $\R^d$ through $\o_0$.

\subsection{A linear operator}\label{s51}
Define now
$$\LL(f)=u$$
through
\begin{equation}
\left\lbrace\begin{array}{l}
\<\omega,\partial_{\f}u\>=f-\PP(f)-\MM(f)\\
\MM(u)=\PP(u)=0.
\end{array}\right.
\label{lineq}\end{equation}

\begin{Lem}\label{mainlemma}
$$\|\LL(f)\|_{\rho',\delta,s}\leq  
C_{s}(\frac1\kappa)^{s+1}(\frac{1}{\rho-\rho'})^{(\tau+1)(s+1)}
\|f\|_{\rho,\delta,s}$$
for any $\rho'<\rho$. The constant $C_s$ only depends, besides $s$, on $\t$ and $l$.
\end{Lem}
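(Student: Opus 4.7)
The plan is to exploit the linearity of the equation in Fourier by writing the unique solution of \eqref{lineq} with $\mathcal{M}(u)=\mathcal{P}(u)=0$ explicitly as
$$u(\varphi,z,\omega)=\sum_{n\neq 0}\frac{1-l(\phi_n(\omega))}{2\pi i\langle n,\omega\rangle}\,\hat f(n,z,\omega)\,e^{2\pi i\langle n,\varphi\rangle},\qquad \phi_n(\omega):=\langle n,\omega\rangle\frac{|n|^\tau}{\kappa},$$
so that the entire problem reduces to estimating the $\mathcal{C}^s$-norm (in $\omega$) of the multiplier $m_n(\omega):=(1-l(\phi_n(\omega)))/\langle n,\omega\rangle$ and summing the resulting series against the analytic decay of the Fourier coefficients $\hat f(n,\cdot,\cdot)$.

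The key observation is that $1-l$ vanishes identically on $[-1/4,1/4]$, so the function $g(t):=(1-l(t))/t$ extends to a $\mathcal{C}^\infty$ function on $\mathbb{R}$ (compactly supported outside a neighborhood of zero, but bounded at infinity by $\lesssim 1/|t|$). Consequently $m_n(\omega)=(|n|^\tau/\kappa)\,g(\phi_n(\omega))$. Since $\phi_n$ is affine in $\omega$ with gradient $(|n|^\tau/\kappa)\,n$, the chain rule collapses cleanly to
$$\partial_\omega^\alpha g(\phi_n(\omega))=g^{(|\alpha|)}(\phi_n(\omega))\,n^\alpha\left(\frac{|n|^\tau}{\kappa}\right)^{|\alpha|},$$
yielding the pointwise bound
$$|\partial_\omega^\alpha m_n(\omega)|\leq \|g\|_{\mathcal{C}^{|\alpha|}}\,\frac{|n|^{(\tau+1)(|\alpha|+1)-1}}{\kappa^{|\alpha|+1}}$$
uniformly in $\omega\in B$. (The Diophantine-like lower bound $|\langle n,\omega\rangle|\geq \kappa/(4|n|^\tau)$ on the support of $1-l\circ\phi_n$ is automatically encoded in the smoothness of $g$.)

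Combining this with the Cauchy estimate $\|\hat f(n,\cdot,\cdot)\|_{0,\delta,s}\leq \|f\|_{\rho,\delta,s}\,e^{-2\pi|n|\rho}$ via Leibniz, the worst contribution (all $s$ derivatives landing on $m_n$) gives
$$|\partial_\omega^\alpha u(\varphi,z,\omega)|\leq C_s\,\|f\|_{\rho,\delta,s}\,\frac{1}{\kappa^{s+1}}\sum_{n\neq 0}|n|^{(\tau+1)(s+1)-1}e^{-2\pi|n|(\rho-\rho')},$$
and the elementary estimate $\sum_{n\neq 0}|n|^k e^{-2\pi|n|(\rho-\rho')}\lesssim (\rho-\rho')^{-(k+d)}$ closes the bound, with the $d$-dependent factors absorbed into $C_s$. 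The one place to be careful is the Leibniz bookkeeping: one must check that among all ways of distributing the $s$ derivatives between $\hat f$ and the multiplier, the dominating term is indeed the one with all derivatives on $m_n$, producing the claimed $\kappa^{-(s+1)}$; since $\kappa<1$ this is automatic, but the combinatorial constants entering $C_s$ should be tracked to ensure dependence only on $s$, $\tau$, and $l$. I expect no genuine analytical obstacle beyond this careful bookkeeping of powers.
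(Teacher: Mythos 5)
Your proof takes essentially the same route as the paper: write $u$ explicitly in Fourier as $\hat u(n,z,\omega)=\hat f(n,z,\omega)\,l_n(\omega)$ with $l_n(\omega)=\tfrac{1}{2\pi i\langle n,\omega\rangle}(1-l(\langle n,\omega\rangle|n|^\tau/\kappa))$, bound $\|l_n\|_{0,0,s}$ using the fact that $1-l$ vanishes where $|\langle n,\omega\rangle|\le\kappa/(4|n|^\tau)$, combine with the Cauchy decay of $\hat f(n,\cdot,\cdot)$ via the convexity/Leibniz estimates, and sum. Your device of introducing the smooth function $g(t)=(1-l(t))/t$ is a clean way to package the lower bound on $\langle n,\omega\rangle$; the paper does the same computation more directly.

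One small inaccuracy worth flagging: your final sum $\sum_{n\neq0}|n|^{(\tau+1)(s+1)-1}e^{-2\pi|n|(\rho-\rho')}$ yields $(\rho-\rho')^{-((\tau+1)(s+1)-1+d)}=(\rho-\rho')^{-((\tau+1)s+\tau+d)}$, not the exponent $(\tau+1)(s+1)=(\tau+1)s+\tau+1$ claimed in the Lemma; this extra $d-1$ in the exponent cannot be absorbed into $C_s$ as you suggest, since it sits in the power of $1/(\rho-\rho')$. The paper's own proof has exactly the same weaker exponent and explicitly acknowledges it, remarking that the sharp exponent $(\tau+1)s+\tau+1$ requires a finer argument due to R\"ussmann. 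So your proof is as complete as the paper's; just be aware that neither actually attains the stated exponent by this elementary summation.
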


\begin{proof}
We give a proof with the exponent $(\tau+1)s+\t+d$ --  the improved exponent
$(\tau+1)s+\t+1$ requires some more subtle considerations 
originally due to R\"ussmann --  see for example \cite{E}.
Equation (\ref{lineq}) is equivalent to 
$\hat u(0,z,\o)=0$ and, for $n\in\Z^d-\{0\}$,
$$ \hat u(n,z,\omega)=\hat f(n,z,\omega) l_n(\o)$$
where
$$l_n(\o)=\frac1{i2\pi\l{n,\o}}(1- l(\lan n,\w \ran \frac{|n|^{\tau}}{\kappa} )).$$

Since
$$\aa{\hat f(n,\cdot,\cdot)}_{0,\de,s}\le \aa{f}_{\rho,\de,s} e^{-2\pi|n|\rho}.$$
and
$$\aa{l_n}_{0,0,s}\le C_s|n|^{(\t+1)s+\t}\frac1{\k^{s+1}}\aa{l}_{0,0,0}+  |n|^{\t}\frac1{\k}\aa{l}_{0,0,s},$$
we get (by Proposition \ref{hadamard}), for $|\al|\leq s$ and $(\f,z,\omega)\in  \T^d_{\rho'}\times \bD^{d}_{\delta}\times B$,
$$|\pa^\al_\o u(\f,z,\omega)|\leq C_{s} 
\sum_{n\ne 0}e^{2\pi |n|\rho'}\times$$
$$\times \biggl(\|\hat f(n,\cdot,\cdot)\|_{0,\de,s}
|n|^{\t}\frac1{\k}
+\|\hat f(n,\cdot,\cdot)\|_{0,\de, 0}|n|^{(\t+1)s+\t}\frac1{\k^{s+1}}\biggr)$$
which gives the estimates by standard arguments.
\end{proof}

\subsection{The counter term theorem}\label{s52}

Let $0<\rho,\de<1$.
Denote by
$\CC_{\rho,\delta}^{\w,\infty}$
the set of functions $f\in C^{\w,\infty}(\T^d_\rho \times \bD_{\delta}^d \times \bD_{\delta}^d,B)$
such that
$$f(\f,r,c,\o)\in \OO^2(r,c).$$

Any function $f\in  \CC_{\rho,\delta}^{\w,\infty}$ can be written 
uniquely as\footnote{\ we applogize for the double use of $B$}

$$a(\varphi,c,\w) + \langle  B(\f,c,\w),r-c \rangle+\frac12\langle r-c, F(\f,r,c,\w)(r-c) \rangle
$$
modulo $\OO^3(r-c)$ with $a=\cO^2(c)$ and $B=\cO(c)$. 

{ We say that $f$ is {\it of order $q$} if $a\in\cO^q(c)$ and  $B\in\cO^q(c)$.}

We define the pseudo-norm 
$$[f]_{\rho,\delta,s}=\max(\aa{a}_{\rho,\delta,s},\aa{B}_{\rho,\delta,s},
\aa{\p_\f\cL a}_{\rho,\delta,s},\aa{\p_\f\cL B}_{\rho,\delta,s})$$
and the vector
$$M_f=\MM  \left(B-F\partial_\f \LL a\right),$$
where, we recall, that
$\cM(g)$ is the {\it mean value} $\int_{\bT^d}g(\f,z)d\f.$
We denote by ${\EE}^{\w,\infty}_{\rho,\delta}$ the set of  
exact symplectic local diffeomorphisms     
defined on a neighborhood of $\T^d \times \{0\}$ of the form
 $$Z_{c,\o}(\f,r)=
\left(\begin{array}{l}
\f+\Phi(\f,c,\w) \\
r+R_1(\f,c,\w)+R_2(\f,c,\w) (r-c)
\end{array}\right)$$
with $\Phi, R_1,R_2  \in  \cC^{\w,\infty}(\T^d_\rho \times \bD_{\delta}^d \times \bD_{\delta}^d,B)$ and  $R_1=\cO^2(c)$,
$\Phi,R_2=\cO(c)$. If $Z'$ is another mapping in
${\EE}^{\w,\infty}_{\rho,\delta}$  then we define
$${[Z-Z']}_{\rho,\de,s}=$$
$$\max_i(\aa{\Phi-\Phi'}_{\rho,\de,s},\aa{R_i-R_i'}_{\rho,\de,s},
\aa{\p_\f\cL(\Phi-\Phi')}_{\rho,\de,s},\aa{\p_\f\cL(R_i-R_i')}_{\rho,\de,s})$$
and
$$(Z\circ Z')_{c,\o}(\f,r)=Z_{c,\o}( Z'_{c,\o}(\f,r)).$$

\medskip

The goal of this section is to prove the following 

\begin{Prop} \label{theo.kam}
For all $s\in\N$, there exist  constants 
$\eps>0$ and $\al(s)\ge 0$,  only depending on $\tau$, 
such that if  $H \in \CC_{\rho,\delta}^{\w,\infty}$ is independent of $\omega$
and satisfies,
for some  $h<\min(\rho/2,\delta/2)$ and some $\sigma<\eps(\tau)$,  
\begin{equation} \label{e510}   
[H]_{\rho,\de,0}\le
\sigma\frac1{(1+\aa{\p_r^2H}_{\rho,\de,0})^7}\k^{11}h^{10(\tau+d)+11},\end{equation} 

then there exist $\Lambda \in \CC^{\w,\i}_{0,\de-h}$ and  $W\in \EE^{\w,\i}_{\rho-h,\de-h}$,    
$H' \in \cC^{\w,\infty}_{\rho-h,\de-h}$, with $[H']_{\rho-h,\de-h,0}=0$, 
and  a $(\k,\t)$-flat function  
$g \in \cC^{\w,\infty}_{\rho-h,\de-h}$  such that 

\begin{equation} \label{ered} (H+\langle \o+ \Lambda(c,\o), \cdot \rangle) \circ W_{c,\o} (\varphi,r) = 
\langle \w, r-c \rangle +  H'(r,\f,c,\w) +  g(\f,r,c,\w) \end{equation} 
(modulo an additive constant that depends on $c,\omega$) 
with, for all $s$, 

\begin{multline}
\max \left(\aa{\Lambda}_{0,\delta-h,s}, {[W-\id]}_{\rho-h,\de-h,s}, \aa{g}_{\rho-h,\de-h,s},  
{\aa{\p_r^2(H'-H)}}_{\rho-h,\de-h,s},    \right) \\ \label{etailles} 
< \sigma
\left(\frac{\aa{\p_r^2H}_{\rho,\de,0}+1}{\k h}\right)^{\alpha(s)}
(\aa{\p_r^2H}_{\rho,\de,0}+[H]_{\rho,\de,0}+1).
\end{multline}

{ Moreover, if $H$ is of order $q$, then $g\in\cO^q(c)$.}

Furthermore, if  
$$\o_0\in DC(2{\k},\t)$$
then $\Lambda, W$ and  $H' $  are analytic on $I_{\de'}$ for some $0<\de'\leq \de$ and 
$g=0$ on $I_{\de'}$.
\end{Prop}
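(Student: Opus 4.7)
The plan is a Newton-type KAM iteration with counter term, built on the operators $\PP$ and $\LL$ of Sections \ref{s32} and \ref{s51}. One step of the induction takes $H = a + \langle B, r-c\rangle + \tfrac12\langle r-c, F(r-c)\rangle + \cO^3(r-c)$ (with $a \in \cO^2(c)$, $B \in \cO(c)$) and produces a generating function $f(\psi, r, c, \omega) = f_0(\psi,c,\omega) + \langle f_1(\psi,c,\omega), r-c\rangle$, a counter-term $\lambda(c,\omega)$, and a reduced Hamiltonian $H^+$ with $[H^+]$ essentially quadratic in $[H]$. Writing $W_f$ for the exact symplectic map generated by $f$ via Proposition \ref{sympl-exact}, I would impose that the "$a$-part" of $(H + \langle \omega + \lambda, r\rangle) \circ W_f$ be $(\kappa,\tau)$-flat (plus an additive constant) and its "$B$-part" be $\cO^2(c)$ plus a $(\kappa,\tau)$-flat term. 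At linear order these requirements become the two cohomological equations
\begin{align*}
\langle \omega, \partial_\psi f_0\rangle &= -a + \MM(a) + \PP(a),\\
\langle \omega, \partial_\psi f_1\rangle &= -B - \lambda - F\,\partial_\psi f_0 + \MM\bigl(B + F\,\partial_\psi f_0\bigr) + (\text{flat}),
\end{align*}
solved by $f_0 = -\LL(a)$, $\lambda = -\MM(B - F\,\partial_\psi \LL a) = -M_f$ (the vector appearing in the definition of $[\cdot]$), and $f_1 = \LL(\cdots)$.

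For the estimates, Lemma \ref{mainlemma} controls the loss of $\LL$ by $\kappa^{-(s+1)} h^{-(\tau+1)(s+1)}$ after contracting the angle domain by $h$, and Lemma \ref{flat} the loss of $\PP$ by $\kappa^{-s} h^{-(s+1)\tau - d}$. Standard Cauchy estimates and composition estimates for exact symplectic maps then yield a quadratic recursion of the form
\[
[H^+]_{\rho - h,\, \delta - h,\, 0} \le C\,(\kappa h)^{-\alpha_0}\,\bigl(1 + \|\partial_r^2 H\|_{\rho,\delta,0}\bigr)^{\alpha_1}\, [H]_{\rho,\delta,0}^2,
\]
together with analogous $C^s$ bounds on $f, \lambda, W_f$ and on $\|\partial_r^2(H^+ - H)\|$. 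The order-in-$c$ preservation is automatic: $\LL$, $\PP$, $\MM$ commute with the formal Taylor expansion in $c$, so vanishing of $a, B$ to order $q$ at $c = 0$ propagates into $f$, $\lambda$, and the flat remainder. I would then iterate on geometric sequences $\rho_n = \rho - h(1-2^{-n})$, $\delta_n = \delta - h(1-2^{-n})$, accumulating $W = \cdots \circ W_{f_1} \circ W_{f_0}$, $\Lambda = \sum_n \lambda_n$, and a flat error $g = \sum_n g_n$. The smallness condition \eqref{e510} is calibrated so that the per-step loss $(\kappa h 2^{-n})^{-\alpha_0}$ is dominated by the quadratic gain, giving convergence to $W \in \EE^{\w,\infty}_{\rho-h,\delta-h}$, $\Lambda \in \CC^{\w,\infty}_{0,\delta-h}$ and a limit $H' \in \CC^{\w,\infty}_{\rho-h,\delta-h}$ with $[H']_{\rho-h,\delta-h,0} = 0$. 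The $C^s$ bounds \eqref{etailles} follow by propagating the $s$-derivative loss of $\LL$ and $\PP$ through the induction; since each $g_n$ lies in the range of $\PP_{\kappa,\tau}$, the total $g$ is $(\kappa,\tau)$-flat.

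The analyticity statement when $\omega_0 \in DC(2\kappa, \tau)$ is essentially automatic from the cut-off: on a complex neighborhood $I_{\delta'}$ of the ray $\{(1+\lambda)\omega_0\}$ every nonzero $n \in \Z^d$ keeps $|\langle n, \omega\rangle| |n|^\tau/\kappa > 1/2$, so $l_n(\omega) \equiv 0$ and hence $\PP \equiv 0$ there; consequently every $g_n$ vanishes on $I_{\delta'}$, and the whole construction reduces to $\LL$ applied to analytic data, yielding analyticity of $\Lambda, W, H'$. The main obstacle is the delicate bookkeeping required to close the iteration with a fixed polynomial loss: one has to choose exponents $\alpha(s)$ so that the $\kappa$- and $h$-losses from $\PP$ and $\LL$, the $\|\partial_r^2 H\|$-dependence coming from the quadratic part $F$, and the $s$-derivative losses in $\omega$ all combine into the single estimate \eqref{etailles}, while the $\kappa^{11}h^{10(\tau+d)+11}$ pre-factor in \eqref{e510} is exactly what is needed to override the worst loss in the first KAM step and ensure convergence of both the $W$-composition and the $\Lambda$-series.
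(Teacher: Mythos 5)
Your proposal correctly identifies the overall architecture of the paper's proof: a Newton-type iteration built on the cut-off $\PP$ and the approximate inverse $\LL$, a triangular pair of cohomological equations producing a generating function $f_0 + \langle f_1, r-c\rangle$ and a counter term at each step, a $(\kappa,\tau)$-flat remainder, geometric decay of widths, the observation that $\LL,\PP,\MM$ preserve the order of vanishing in $c$, and the analyticity argument on the Diophantine ray where $\PP$ vanishes identically. All of this matches the paper's Sections 7.3--7.6.

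There is, however, one substantive gap in the counter-term bookkeeping. The paper splits the inductive step into two lemmas (Lemma \ref{lambda}, then Lemma \ref{sanslambda}), and this split is not merely organizational. To arrive at the precise form $(H+\langle\omega+\Lambda,\cdot\rangle)\circ W$ demanded by \eqref{ered}, the new counter term $\lambda_n$ at step $n$ must be added \emph{before} the accumulated conjugacy $W_{n-1}$, i.e.\ one must solve $M_{\,H_n+\langle\lambda_n,\cdot\rangle\circ W_{n-1}}=0$. This is a linear system for $\lambda_n$ with coefficient matrix $I+{}^t\!R_2-F\,\partial_\phi\LL R_1$, where $R_1,R_2$ are the components of $W_{n-1}$, solved as a Neumann series $\Lambda=\sum_k X^k Y$ in Lemma \ref{lambda}. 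The naive choice you write, $\lambda=-\MM(B-F\partial_\psi\LL a)=-M_{H_n}$, is correct only at the first step, where $W_0=\id$; at later steps the pushforward of $\langle\lambda_n,\cdot\rangle$ through $W_{n-1}$ changes the mean-value structure, and if you instead add $\lambda_n$ directly to the already-conjugated Hamiltonian $H_n$ you obtain an accumulated object that is \emph{not} of the form $(H+\langle\omega+\Lambda,\cdot\rangle)\circ W$ for any $\Lambda$, so \eqref{ered} fails. A secondary point: your displayed cohomological equation for $f_1$ contains both $-\lambda$ and $+\MM(B+F\partial_\psi f_0)$ on the right-hand side; taking means then forces $\lambda=0$, contradicting your stated choice of $\lambda$. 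The $\MM$ term should be absent --- it is exactly what the counter term is there to absorb. Fixing both issues reduces your scheme to the one in the paper.
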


We shall first prove the main part of this proposition, then we will explain what
modifications are required in order to obtain the final analyticity statement.

The proof of Proposition \ref{theo.kam} is based on an inductive KAM scheme. 
In each step of the scheme we  conjugate a Hamiltonian of the form 
$$\langle \o,r-c \rangle+a(\varphi,c,\w) + \langle  B(\f,c,\w),r-c \rangle+
\frac12\langle r-c,
F(\varphi,r,c,\w)(r-c)\rangle.$$
and reduce quadratically the terms $a$ and $B$. To do so, we look for a conjugacy 
using a generating function of the form $\<r,\psi\>+u_0(\psi)+\<u_1(\psi),r\>$ and 
we solve a triangular cohomological system  in $u_0$ and $u_1$ to reduce $a$ and $B$. 
This is only possible up to a $(\kappa,\tau)$-flat  function $g$ and also requires 
that the constant terms in the cohomological equations vanish and this is why we 
have to add the counter term  
$\<\Lambda,\cdot \>$ and a constant.

The inductive step of the scheme is enclosed in Proposition \ref{inductive}. 
To add clearness to the presentation we split the proof of the latter proposition 
into two parts : in the first part we suppose the constant terms in the 
cohomological equations do vanish and build the conjugacy (this is the content of 
Lemma \ref{sanslambda}) and in the second one we show that adding counter  terms 
allows to zero the constant terms in the cohomological  equations (this is the content 
of Lemma \ref{lambda}).  Proposition \ref{inductive} is a direct consequence of Lemmas 
\ref{sanslambda} and \ref{lambda}.

We will finally conclude in Sections \ref{s55} and \ref{s56} showing that the iteration scheme based on the inductive step 
of Proposition \ref{inductive} does converge if the initial bound (\ref{e510})    is satisfied.

\subsection{Reduction lemmas}\label{s53}

In this section we first fix $\rho,\de<1$ and 
a number $h$ less than $\min(\rho/2,\delta/2)<\frac12$
and we set
$$\xi_s=\k^{(s+1)}  h^{(\t+1)(s+1)+d}.$$
We fix $H\in \CC_{\rho,\delta}^{\w,\infty}$, which may depend on $\omega$, and let
$$\eps_{s} ={[H]}_{\rho,\delta,s}
\quad\textrm{and}\quad
\zeta_{s} = \aa{\p_r^2 H}_{\rho,\de,s}+1.$$

\medskip

\begin{Lem} \label{sanslambda} 
There exist positive constants  $\varsigma=\varsigma(\tau)$ 
and $C_s=C_s(\tau)$, such that if $M_H = 0$
and 
\begin{equation} \label{e511}  \eps_{1}< \varsigma 
\frac1{\zeta_1} \kappa^2 h^{2\t +d+6},\end{equation} 
then there exist
$Z\in \EE^{\w,\i}_{\rho-h,\de-h}$, $H' \in \cC^{\w,\i}_{\rho-h,\de-h}$ and a $(\k,\t)$-flat function 
$g'$ such that 
\begin{equation*}(H+ \langle \w, \cdot \rangle)  \circ Z_{c,\o} (\varphi,r) = 
\langle \w, r-c\rangle + H'(\varphi,c,\w)+ g'(\f,r,c,\w),\end{equation*}
(modulo an additive constant that depends on $c,\o$) with, for all $s\in\N$,
$$[Z-\id]_{\rho-h,\delta-h,0} < \frac{h}{2},$$
$$ {[H']}_{\rho-h,\delta-h,s}\le \nu_s\eps_0$$
and
$$\max({[Z-\id]}_{\rho-h,\delta-h,s},\aa{\p_r^2(H'-H)}_{\rho-h,\delta-h,s},\aa{g'}_{\rho-h,\delta-h,s})
\le\nu_s,$$
for
$$\nu_{s}=C_s \frac{1}{\xi_s^{3}}
\zeta_{0}  (\zeta_0 \eps_{s}+ \zeta_{s}  \eps_{0}).$$
{Moreover, if $H$ is of order $q$, then $H'$ is of order $q$ and $g'\in\cO^q(c)$.}

\end{Lem}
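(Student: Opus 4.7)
I would construct $Z_{c,\w}$ as the symplectic map produced by a generating function of the form
\[
S(\psi, r, c, \w) = u_0(\psi, c, \w) + \langle u_1(\psi, c, \w), r - c\rangle,
\]
with $u_0$ (scalar) and $u_1$ (vector) chosen to cancel the two leading Taylor coefficients of $H$ at $r=c$ up to an additive constant and a $(\k,\t)$-flat function. Writing $H = a(\f, c, \w) + \langle B(\f, c, \w), r - c\rangle + \frac12\langle r-c, F(\f, r, c, \w)(r-c)\rangle$ and expanding the conjugation action $(H + \langle\w, r\rangle)\circ Z - (H + \langle\w, r\rangle)$ to first order in $S$ as the Poisson bracket $-\{S, H + \langle\w, r\rangle\}$, the matching of the $(r-c)^0$ and $(r-c)^1$ Taylor coefficients yields two cohomological equations
\[
\langle \w, \pa_\f u_0\rangle = a - \MM(a) - \PP(a), \qquad \langle \w, \pa_\f u_1\rangle = B - F\,\pa_\f u_0 - \MM\bigl(B - F\,\pa_\f u_0\bigr) - \PP\bigl(B - F\,\pa_\f u_0\bigr).
\]
I would then set $u_0 := \LL(a)$ and $u_1 := \LL(B - F\,\pa_\f \LL(a))$, with $\LL$ as in Section~\ref{s51}. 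Solvability of the $u_1$-equation requires the mean of its right-hand side to vanish, which is exactly $\MM(B - F\,\pa_\f\LL a) = M_H = 0$, our hypothesis. The leftover $\MM(a)$ is absorbed into the additive constant, and the unmatched $\PP$-projections combine into the $(\k,\t)$-flat function $g'$.

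Next I would build $Z_{c,\w}$ from the generating-function relations $\f_{\rm new} = \psi + \pa_r S = \psi + u_1(\psi, c, \w)$ and $r_{\rm old} = r_{\rm new} + \pa_\psi S$. Lemma~\ref{mainlemma} controls $\|u_0\|$ and $\|u_1\|$ in terms of $\|a\|, \|B\|$ and $\|F\|\,\|\pa_\f\LL a\|$, each estimate carrying a small-divisor loss $\xi_s^{-1} = \k^{-(s+1)}h^{-((\t+1)(s+1)+d)}$. The smallness hypothesis~(\ref{e511}) then makes $\|u_1\|$ small enough that $\psi \mapsto \psi + u_1(\psi)$ is invertible on $\T^d_{\rho-h}$ by a Neumann series, and a direct check shows that the resulting $Z_{c,\w}$ has the advertised form $(\f + \Phi(\f, c, \w), \, r + R_1(\f, c, \w) + R_2(\f, c, \w)(r-c))$ with $\Phi, R_1, R_2$ independent of $r$, hence $Z_{c,\w} \in \EE^{\w,\i}_{\rho-h, \de-h}$ and $[Z - \id]_{\rho-h, \de-h, 0} < h/2$.

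Finally I would expand $(H + \langle\w, r\rangle) \circ Z_{c,\w}$ jointly in Taylor series at $r = c$ and in powers of $S$. By construction, the linear-in-$S$ contributions to the $(r-c)^0$ and $(r-c)^1$ coefficients are exactly absorbed by the cohomological equations, leaving (i) a constant in $\f$ attached to the additive constant, (ii) the $\PP$-pieces gathered into $g'$ and estimated via Lemma~\ref{flat}, and (iii) quadratic-in-$S$ remainders that define the new $a$- and $B$-parts of $H'$, together with the pullback of $\frac12\langle r-c, F(r-c)\rangle$ along $Z$ that gives the new $F$-part. The quadratic structure of the $\tilde a, \tilde B$ remainders (products of two factors of $\pa S$, or of $\pa S$ with a Taylor remainder of $H$) is what promotes $\eps_0$ to the combination $\zeta_0(\zeta_0\eps_s + \zeta_s\eps_0)$ in $\nu_s$; the factor $\xi_s^{-3}$ accounts for the two uses of $\LL$ (one for $u_0$, one for $u_1$, the latter involving the inner $\pa_\f\LL a$) plus Cauchy losses for the $\pa_\f$ derivatives. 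If $H$ is of order $q$, then $a, B \in \cO^q(c)$; since $\LL$ and $\PP$ act only in the $\f$-variable, also $u_0, u_1 \in \cO^q(c)$, hence $g' \in \cO^q(c)$ and the new $\tilde a, \tilde B$ remain of order $q$.

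The main obstacle is the bookkeeping in this last step: one must carefully isolate, from the full expansion of $H\circ Z$, exactly the linear-in-$S$ Poisson-bracket terms the cohomological equations are designed to cancel, and then estimate the quadratic-in-$S$ Taylor-plus-Lie remainders as genuine products of small quantities, so as to reach the quadratic gain $\eps_s\eps_0$ in $[H']$ rather than just $\eps_0$. The appearance of the $\zeta_0\zeta_s\eps_0^2$ term in $\nu_s$ (hence the non-strictly-quadratic factor $\zeta_0(\zeta_0\eps_s + \zeta_s\eps_0)$) reflects precisely the fact that $F$ is merely bounded but not small, so it must be carried along unchanged through each step and interacts with the small factors through Cauchy differentiations.
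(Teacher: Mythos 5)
Your proposal follows essentially the same route as the paper: decompose $H$ into $a$, $B$, $F$-parts around $r=c$, set up the triangular cohomological system for $u_0,u_1$ with the $\PP$-truncation diverted into the flat remainder $g'$ and the hypothesis $M_H=0$ ensuring no residual $\langle\cdot, r-c\rangle$ term survives, estimate $u_0,u_1$ via Lemma~\ref{mainlemma}, invert $\psi\mapsto\psi+u_1(\psi)$, and recognize the quadratic-in-$S$ and composition remainders as the new $H'$ with the resulting $\xi_s^{-3}\zeta_0(\zeta_0\eps_s+\zeta_s\eps_0)$ loss. One cosmetic inconsistency: in your first paragraph you derive the cohomological equations from the Lie-series heuristic $-\{S,H_0\}=-\langle\omega,\partial_\f S\rangle$, which gives $u_0=\LL(a)$, but in the second paragraph you switch to the generating-function change of variables $s=r+\partial_\psi U$, $\f=\psi+\partial_r U$ (the paper's), for which the cancellation requires the opposite sign $u_0=-\LL(a)$; either convention works once used consistently, and the estimates are unchanged.
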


\begin{proof} 
We  introduce $G(\f,r,c,\o)\in\cO^3(r-c)$ 
$$G:=H(\f,r,c,\o) - a(\f,c,\o) -\langle B(\f,c,\o),r-c\rangle-
\frac12\langle r-c,F(\varphi,c,\w)(r-c)\rangle.$$
Notice that 
$$
\aa{\p_r^2 G}_{\rho,\de,s}\lsim  \aa{\p_r^2 H}_{\rho,\de,s}$$
and
$$
\aa{\p_r^3 G}_{\rho,\de-h,s}\lsim  \frac{1}{h} \aa{\p_r^2 H}_{\rho,\de,s}.$$

We look for the diffeomorphism $Z(\f,r,c,\o)=(\psi,s)$ {\it via} a 
generating function of the form $\langle \psi,r \rangle + U(\psi,c,\w)$,
$$U(\psi,r,c,\w)=u_0(\psi,c,\w)+  \langle u_1(\psi,c,\w),r-c \rangle 
 \in \cC^{\w,\infty}_{\rho-h,\delta-h,s},$$
i.e. 
$$\left\{\begin{array}{l}
s=r+ \partial_\psi u_0  + \l{\p_\psi u_1, r-c} \\
\f=\psi+u_1(\psi). \end{array}\right.$$
All our functions depend, besides $\psi$, on $c,\o$ and we shall in the sequel suppress this
dependence  in the notations.

We have, modulo an additive constant,
\begin{multline}\label{e512}
(H+ \langle \w, \cdot \rangle)\circ Z(\f,r)-\< \w,r-c\> \\
= (I) + (II) + (III) +
 G(\psi,r+\partial_\psi U(\psi,r)),
\end{multline}
where
 \begin{align*}
 (I) &=  \l{\w,\p_\psi u_0} +a+\l{B,\p_\psi u_0}+ \frac12\l{F \partial_\psi u_0,\partial_\psi u_0}-\cM(a) \\ 
(II) &=  \l{ \l{\omega,\p_\psi u_1}, r-c} +  \<B,r-c+\p_\psi u_1 (r-c)\>  \\
&\ \  \ +  \l{F \partial_\psi u_0,r-c+\partial_\psi u_1 (r-c)}     \\
(III)&= \frac12\langle r-c,F(r-c)\rangle +  \l{F (r-c),\partial_\psi u_1 (r-c)} \\
&\ \ \ +  \frac12\l{F\partial_\psi u_1 (r-c), \partial_\psi u_1 (r-c)}.   
\end{align*}

{\it The homological equation.}
To kill as much as possible of $a$ and $B$ in $(H+ \langle \w, \cdot \rangle)\circ Z$ we take
 $$
\left\{\begin{array}{l}
u_0=- \LL(a) \\
u_1= -\LL(B+F  \partial_\psi u_0)     \end{array}\right.$$
{ Observe  that if $a,B\in\cO^q(c)$, then $u_0, u_1\in\cO^q(c)$.} Recall also that the control on $[H]_{\rho,\delta,s}$ implies a control on $\aa{\p_\f\cL a}_{\rho,\delta,s},\aa{\p_\f\cL B}_{\rho,\delta,s}$. Define 
$$e_s:= \frac{1}{h\xi_s} (\zeta_s \eps_0+\zeta_0\eps_s).$$

By Lemma \ref{mainlemma} we have\footnote{\  The constant $C_s$ will differ from line to line}
\begin{equation*}\label{equ1}
[U]_{\rho-h,\de,s}+[\p_r U]_{\rho-h,\de,s}\leq C_s e_s.
\end{equation*}
(Here and elsewhere we use Proposition \ref{hadamard} to estimate products. The additional factro $1/\xi_s$ appears since we have to apply the operator $\cL$ to $\delta_\psi \cL a$.)

{\it Estimation of $Z$.} Since, by (\ref{e511}),
$$e_1\ll h$$
Proposition \ref{annexe.inverse} implies  that the mapping $\f=\tilde f(\psi)=\psi+u_1(\psi)$
is invertible with inverse satisfying
\begin{align*}   
\aa{\tilde f^{-1}-\id}_{\rho-2h,\de -h,s} \le C_s e_s,
\end{align*}
and Proposition \ref{annexe.compose} gives
\begin{align*}   
\aa{Z-\id}_{\rho-3h,\de -2h,s} \leq   C_s  \frac{e_s}{h}.
\end{align*}
It follows that $Z  \in \EE_{\rho-4h,\de-3h}^{\w,\i}$ and Lemma \ref{mainlemma} implies that
\begin{align*}   
{[Z-\id]}_{\rho-4h,\de -3h,s} \leq   C_s \frac{e_s}{ \xi_s h^{2}}.
\end{align*}

\medskip

{\it Estimation of the function $g'$.}
Let 
$$h=\PP(a) +  \langle \PP(B-F  \partial_\psi u_0),r-c \rangle$$
and $g'(\f,r)=h(\psi,r)$. Then, by Lemma \ref{flat},
$$\aa{h}_{\rho-2h,\de,s} \le C_s e_s$$
and, by Proposition \ref{annexe.compose},
$$\aa{g'}_{\rho-3h,\de-2h,s} \le 
C_s e_s h^{-1}.$$

\medskip 

{\it Checking that $H'$ is of order $q$.} We note that if $H$ is of order $q$, then $a,B,u_0,u_1,g'\in \cO^q(c)$.
Hence the terms (I) and (II) in the RHS of 
\eqref{e512} are $\cO^q(c)$. Now
$H'(\f,r)$ equals
$$I(\f+\Phi(\f,c),c)+II(\f+\Phi(\f,c),r-c,c)+III(\f+\Phi(\f,c),r-c,c)$$
and since $III\in \cO^2(r-c)$ we conclude that $H'$ is of order $q$.

\medskip 

{\it Estimation of $H'$.} We set 
$$G_1(\f,r)=\langle \p_r G(\f,r), \p_\psi U(\f,r)\rangle$$
and 
$$G_2(\f,r)=G(\psi,r+\partial_\psi U(\psi,r))-G(\f,r)-\langle \p_r G(\f,r),\p_\psi U(\f,r)\rangle.$$
Then  $G_1\in \cO^2(r-c)$  and   the RHS of \eqref{e512} satsifies 
$$RHS-h=(I)+(II)-h+G_2+\cO^2(r-c)$$ 
as well as 
$$\p_r^2(RHS-H)=\p_r^2((III)+G_1+G_2)-F$$
because $G=\cO^3(r-c)$. 
Now we have that 
\begin{equation}\label{e513}
[(I)+(II)-h]_{\rho-2h,\de,s} \leq C_s\frac1{h^{2}\xi_s}[(\eps_s e_0  + \eps_0 e_s )
+e_0(\zeta_s e_0  + \zeta_0 e_s )]
\end{equation}
(here we use that $M_H=0$) and by Proposition \ref{annexe.compose}(ii),
\begin{equation}\label{e516}
\aa{G_2}_{\rho-2h,\de-h,s} \leq C_s(\zeta_s e_0 + \zeta_0 e_s)\frac{e_0}{h^{4}}\end{equation}
since $e_0\ll h^{4}$.
 The inequality (\ref{e516}) implies in particular that
\begin{equation}\label{e517}
[G_2]_{\rho-3h,\de-2h,s} \leq C_s\frac1{h^{2}\xi_s}(\zeta_s e_0 + \zeta_0 e_s)\frac{e_0}{h^{4}}\end{equation}
and
\begin{equation}\label{e518}
\aa{\p_r^2 G_2}_{\rho-2h,\de-2h,s} \leq C_s(\zeta_s e_0 + \zeta_0 e_s)\frac{e_0}{h^{6}}.\end{equation}

It follows from (\ref{e513}) and (\ref{e517})  that the right hand side of (\ref{e512}) verifies
$$
[RHS-h]_{\rho-3h,\de-2h,s} \leq 
C_s\frac1{h\xi_s}[(\eps_s e_0  + \eps_0 e_s )\frac{1}{h}
+C_s(\zeta_s e_0  + \zeta_0 e_s )\frac{e_0}{h^5}].$$

On the other hand, since  \begin{equation}\label{e515}
\aa{\p_r^2 G_1}_{\rho-2h,\de-h,s} \leq C_s(\zeta_s e_0 + \zeta_0 e_s)\frac{1}{h^2}
\end{equation}
and
\begin{equation}\label{e514}
\aa{\p_r^2(III)-F}_{\rho-2h,\de,s} \leq C_s(\zeta_s e_0 + \zeta_0 e_s)\frac{1}{h^{2}}.
\end{equation}
it follows that
$$
\aa{\p_r^2(RHS-H)}_{\rho-2h,\de-2h,s} \leq C_s(\zeta_s e_0 + \zeta_0 e_s)\frac{e_0}{h^{6}}$$
--  by (\ref{e518}+\ref{e515}+\ref{e514}).

Since $H'(\f,r)=RHS(\psi,r)-g'(\f,r)$ we get by Proposition \ref{annexe.compose} that
$$
[H']_{\rho-4h,\de-3h,s} \leq 
C_s\frac1{h\xi_s}[(\eps_s e_0  + \eps_0 e_s )\frac{1}{h}
+C_s(\zeta_s e_0  + \zeta_0 e_s )\frac{e_0}{h^{5}}]$$
and
$$
\aa{\p_r^2(H'-H)}_{\rho-4h,\de-3h,s} \leq C_s(\zeta_s e_0 + \zeta_0 e_s)\frac{e_0}{h^{6}}.$$

This completes the proof of the lemma.
\end{proof} 

Let $W \in \EE_{\rho,\delta}^{\w,\infty}$ and denote
$$\eta_s=[W-\id]_{\rho,\delta,s}.$$

\begin{Lem} \label{lambda} There exist positive constants  $\varsigma=\varsigma(\tau)$ 
and $C_s=C_s(\tau)$, such that  if 
\begin{equation} \label{e519} 
\eta_0<  \varsigma \frac1{\zeta_0},\end{equation}
then there exists $\Lambda \in \CC^{\w,\i}_{\de}$ such that
$$ \tilde H=H+\langle \Lambda, \cdot \rangle \circ W$$
verifies $ M_{\tilde H} = 0$ and $\p_r^2 \tilde H=\p_r^2 H$.

{ Also, if $H$ is of order $q$, then $\tilde{H}$ is of order $q$.}

Moreover, for all $s\in\N$,
\begin{equation*} 
\aa{\Lambda}_{\delta,s}\le C_s\zeta_0
(\zeta_0\eps_0\eta_s+\zeta_0\eps_s+\zeta_s\eps_0) \end{equation*}
and
\begin{equation*} 
[\tilde H-H]_{\rho,\delta,s} \le C_s\zeta_0
(\zeta_0\eps_0\eta_s+(\zeta_0\eps_s+\zeta_s\eps_0)(\eta_0+1)).
\end{equation*}
\end{Lem}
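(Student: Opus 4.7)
The plan is to reduce the condition $M_{\tilde H}=0$ to a linear, invertible equation in $\Lambda$.

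Writing $W_{c,\omega}(\varphi,r)=(\varphi+\Phi,\, r+R_1+R_2(r-c))$ with $R_1,R_2$ independent of $r$, a direct expansion of $\langle\Lambda,\cdot\rangle\circ W$ yields the decomposition
$$\tilde a=a+\langle\Lambda,c\rangle+\langle\Lambda,R_1\rangle,\qquad
\tilde B=B+\Lambda+R_2^{T}\Lambda,\qquad \tilde F=F.$$
In particular $\partial_r^2\tilde H=\partial_r^2 H$ is automatic, and since $R_1\in\cO^2(c)$ and $R_2\in\cO(c)$, the order of $\tilde H$ equals the order of $H$ as soon as $\Lambda$ inherits the order of $M_H$. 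Using that $\cL$ kills $\varphi$-independent functions and commutes with multiplication by the ($\varphi$-constant) vector $\Lambda$, the condition $M_{\tilde H}=\cM(\tilde B-\tilde F\,\partial_\varphi\cL\tilde a)=0$ rewrites as the linear equation
$$(I+M_1)\,\Lambda=-M_H,\qquad M_1:=\cM(R_2^{T})-\cM\bigl(F(\partial_\varphi\cL R_1)^{T}\bigr).$$

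By the product estimates and the bounds $\aa{R_2}_{\rho,\delta,0},\aa{\partial_\varphi\cL R_1}_{\rho,\delta,0}\le\eta_0$ and $\aa{F}_{\rho,\delta,0}\le\zeta_0$, one has $\aa{M_1}_{\delta,0}\lsim\zeta_0\eta_0\le 1/2$ for $\varsigma$ small enough. Hence $I+M_1$ is invertible by Neumann series, $\Lambda=-(I+M_1)^{-1}M_H$ is the unique solution, and since Neumann inversion preserves $\cO^q(c)$, $\Lambda$ inherits the order of $M_H$. Applying the interpolation inequality $\aa{AB}_s\lsim\aa{A}_s\aa{B}_0+\aa{A}_0\aa{B}_s$ termwise in the Neumann series gives $\aa{\Lambda}_{\delta,s}\lsim\aa{M_H}_{\delta,s}+\aa{M_1}_{\delta,s}\aa{M_H}_{\delta,0}$, while the Leibniz bounds
$$\aa{M_H}_{\delta,0}\lsim\zeta_0\eps_0,\quad \aa{M_H}_{\delta,s}\lsim\zeta_0\eps_s+\zeta_s\eps_0,\quad \aa{M_1}_{\delta,s}\lsim\zeta_0\eta_s+\zeta_s\eta_0,$$
combined with $\eta_0\zeta_0\le\varsigma$, yield the announced bound on $\aa{\Lambda}_{\delta,s}$.

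The estimate on $[\tilde H-H]_{\rho,\delta,s}$ then follows by substituting $\Lambda$ into the explicit formulas $\tilde a-a=\langle\Lambda,c\rangle+\langle\Lambda,R_1\rangle$ and $\tilde B-B=\Lambda+R_2^{T}\Lambda$ together with their $\partial_\varphi\cL$-counterparts (using $\cL\Lambda=\cL\langle\Lambda,c\rangle=0$), and applying the product rule a final time; the extra factors of $\eta_\bullet$ introduced by $R_1,R_2$ are absorbed into the three-term expression $\zeta_0\bigl(\zeta_0\eps_0\eta_s+(\zeta_0\eps_s+\zeta_s\eps_0)(\eta_0+1)\bigr)$. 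There is no serious conceptual obstacle: the content of the lemma is the observation that $\Lambda\mapsto M_{\tilde H}+M_H$ is a small perturbation of the identity, so solvability is immediate. The only real work is the careful bookkeeping of the factors $\zeta_\bullet,\eps_\bullet,\eta_\bullet$ through the Leibniz estimates, which is the principal technical task.
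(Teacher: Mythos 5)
Your proof is correct and follows essentially the same route as the paper's: you write the decomposition $\tilde a=a+\langle\Lambda,R_1+c\rangle$, $\tilde B=B+(I+{}^t\!R_2)\Lambda$, $\tilde F=F$, reduce $M_{\tilde H}=0$ to a linear equation of the form $(I+M_1)\Lambda=-M_H$, observe that \eqref{e519} makes $\aa{M_1}_{\delta,0}$ small so the Neumann series converges, and then obtain the estimates by Proposition~\ref{hadamard}. This is precisely the paper's argument (with $X=-M_1$ and $Y=-M_H$), including the observation that $\Lambda$ inherits the order $\cO^q(c)$ from $M_H$ because the Neumann series preserves it.
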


\begin{proof} 
Write $H(\varphi,r,c,\o)$ as
$$a(\varphi,c,\w) + \langle  B(\f,c,\w),r-c \rangle+\frac12\langle r-c, F(\f,c,\w)(r-c) \rangle$$
and
$\tilde H(\varphi,r,c,\o)$ as
$$\tilde a(\varphi,c,\w) + \langle  \tilde B(\f,c,\w),r-c \rangle+\frac12\langle r-c, \tilde F(\f,c,\w)(r-c) \rangle $$
modulo $\OO^3(r-c)$.

Observe that 
 $$W_{c,\o}(\f,r)=
\left(\begin{array}{l}
\f+\Phi(\f,c,\w) \\
r+R_1(\f,c,\w)+R_2(\f,c,\w) (r-c)
\end{array}\right),$$
so
\begin{align} \label{ea1}
\tilde a&=a+\<\Lambda,R_1+c\> \\
\tilde  B&=B+(I+{}^t\! R_2) \Lambda \label{eb1}
\end{align}
and $\tilde F=F$. We want to choose $\Lambda$ so that $M_{\tilde H}=0$, i.e.
 $$\MM\left( \left[I+{}^t\! R_2-F\p_\f \LL R_1\right] \Lambda - F \partial_\f \LL a +B\right)=0.$$
 If $X=-\MM({}^t\! R_2-F\p_\f \LL R_1)$ and $Y=\MM(-B + F \partial_\f \LL a)$, then
 this amounts to
 \begin{equation} \Lambda=  \sum_nX^nY. \label{el1} \end{equation}

{  Observe that if $a,B\in\cO^q(c)$, then $Y\in \cO^q(c)$, thus 
$\Lambda\in\cO^q(c)$ and $\tilde{H}$ is of order $q$. }

We have
$$\aa{X}_{\de,s}\le C_s(\zeta_s\eta_0+\zeta_0\eta_s)$$
 and
$$\aa{Y}_{\de,s}\le C_s(\zeta_s\eps_0+\zeta_0\eps_s).$$

By assumption (\ref{e519}), $\aa{X}_{\de,0}\le 1/2$, which gives the existence
and the estimates on $\Lambda$ and $\tilde{H}$ by Proposition \ref{hadamard} and (\ref{ea1})--(\ref{el1}).
\end{proof}

\subsection{The inductive step}\label{s54}

Combining Lemma \ref{sanslambda} and  Lemma \ref{lambda} 
we immediately get the following proposition that constitutes 
the inductive step of our KAM scheme for the proof of Proposition \ref{theo.kam}.
For the needs of the inductive application, we will consider  that 
at each step we have a Hamiltonian $H \in  \cC^{\w,\i}_{\rho,\de}$ as well as $g \in   \cC^{\w,\i}_{\rho,\de}$ $(\k,\t)$-flat, and $W\in   \EE^{\w,\i}_{\rho,\de}$.

As in the previous section we  assume
$h<\min(\rho/2,\delta/2)$, and we set
$$\xi_s=\k^{(s+1)}  h^{(\t+1)(s+1)+d}$$
and 
$$\eps_{s} ={[H]}_{\rho,\delta,s}
\quad\textrm{and}\quad
\zeta_{s} = \aa{\p_r^2 H}_{\rho,\de,s}+\aa{g}_{\rho,\de,s}+1$$
and
$$\eta_s=[W-id]_{\rho,\de,s}.$$

\begin{Prop} \label{inductive}  There exist $\varsigma=\varsigma(\tau)$ 
and $C_s=C_s(\tau)$ such that, if 

\begin{equation} \label{e520} 
\eta_{0}  <  \varsigma \frac1{\zeta_0}\end{equation}
and
\begin{equation} \label{e521}  \eps_{1}< 
\varsigma \frac1{\zeta_1^2(1+\eta_1)}\kappa^2  h^{2\t +d+6}, \end{equation} 
then there exist $\Lambda \in \CC^{\w,\i}_{0,\de}$, $Z'\in \EE^{\w,\i}_{\rho-h,\de-h}$, 
$H' \in \cC^{\w,\i}_{\rho-h,\de-h}$ and a $(\k,\t)$-flat function $g'$ such that 
 \begin{multline}\label{e522}
(H+ g+\langle \o, \cdot \rangle +\langle \Lambda(c,\o), 
\cdot \rangle \circ W)\circ Z_{c,\o}'(\varphi,r) = \\
\langle \w, r-c \rangle +H'(\varphi,c,\w) +  g'(\f,r,c,\w)\end{multline} 
(modulo an additive constant that depends on $c,\o$) with
\begin{equation}\label{e523}
[Z'-\id]_{\rho-h,\delta-h,0} < \frac{h}{2},
\end{equation}
\begin{equation}\label{e524}
{[H']}_{\rho-h,\delta-h,s}\le \nu_s\eps_0
\end{equation}
and
\begin{multline}\label{e525}
\max(\aa{\Lambda}_{0,\delta-h,s},\aa{\p_r^2(H'-H)}_{\rho-h,\delta-h,s},\aa{g'-g}_{\rho-h,\delta-h,s},\\
{[Z'-\id]}_{\rho-h,\delta-h,s},[W\circ Z'-W]_{\rho-h,\delta-h,s})\le\nu_s,
\end{multline}
where 
\begin{equation}\label{e526}
\nu_{s}=C_s \xi_s^{-5}
\zeta_{0}^{5}  (\eps_s+\zeta_s\eps_{0}+ \eta_{s} \eps_{0}).
\end{equation}

{Moreover, if $H$ is of order $q$ and $g\in\cO^q(c)$, then $H'$ is of order $q$ 
and $g'\in\cO^q(c)$.}

\end{Prop}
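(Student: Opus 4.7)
The strategy is to chain the two preceding lemmas in the obvious order: first apply Lemma \ref{lambda} to produce the counter term $\Lambda$ that kills the obstruction $M_{\tilde H}$, then feed the resulting Hamiltonian into Lemma \ref{sanslambda} to produce the symplectic conjugacy $Z'$ together with a new flat remainder and a quadratically smaller $H'$. The whole proof is bookkeeping; the only genuine issue is to check that the compound hypothesis (\ref{e520})--(\ref{e521}) implies the smallness conditions (\ref{e519}) and (\ref{e511}) of the two lemmas in turn, and that the output estimates telescope to the form (\ref{e524})--(\ref{e526}).

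Concretely, set $\hat H:=H+g$ and regard $\hat H$ as the input to Lemma \ref{lambda}. Since $g$ is flat the decomposition $a+\l{B,r-c\>+\frac12\l{r-c,F(r-c)\>}$ absorbs its contribution, and because $\zeta_s$ in our proposition already includes $\aa{g}_{\rho,\de,s}$, the hypothesis (\ref{e519}) of Lemma \ref{lambda} reduces to (\ref{e520}). The lemma then produces $\Lambda\in\cC^{\w,\i}_{0,\de}$ such that $\tilde H:=\hat H+\l{\Lambda,\cdot\>\circ W$ satisfies $M_{\tilde H}=0$ and $\p_r^2\tilde H=\p_r^2\hat H$, together with the estimates
\begin{equation*}
\aa{\Lambda}_{\de,s}\le C_s\zeta_0(\zeta_0\eps_0\eta_s+\zeta_0\eps_s+\zeta_s\eps_0),\qquad
[\tilde H-H]_{\rho,\de,s}\le C_s\zeta_0\bigl(\zeta_0\eps_0\eta_s+(\zeta_0\eps_s+\zeta_s\eps_0)(\eta_0+1)\bigr).
\end{equation*}
If $H$ is of order $q$ and $g\in\cO^q(c)$, then $\tilde H$ is of order $q$ by the corresponding statement in Lemma \ref{lambda}.

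Next, apply Lemma \ref{sanslambda} to $\tilde H$ (which has $M_{\tilde H}=0$). Because $\p_r^2\tilde H=\p_r^2\hat H$ and a Cauchy estimate bounds $\aa{\p_r^2 g}_{\rho-h/2,\de-h/2,0}$ by $\aa{g}_{\rho,\de,0}/h^2$, the relevant $\zeta_1^{\text{new}}$ for $\tilde H$ is controlled by $\zeta_1$; and the bound on $[\tilde H]_{\rho,\de,1}$ above is controlled, via (\ref{e520}), by $C\zeta_1\eps_1(1+\eta_1)$. The hypothesis (\ref{e521}) is precisely calibrated, with the factor $\zeta_1^2(1+\eta_1)$, so that $\eps_1^{\tilde H}<\varsigma\zeta_1(\tilde H)^{-1}\k^2h^{2\t+d+6}$ holds, i.e. (\ref{e511}) is satisfied. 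Lemma \ref{sanslambda} then supplies $Z'\in\EE^{\w,\i}_{\rho-h,\de-h}$, a Hamiltonian $H'$ and a $(\k,\t)$-flat function $g'$ such that $(\tilde H+\l{\o,\cdot\>)\circ Z'_{c,\o}=\l{\o,r-c\>+H'+g'$, which is exactly the identity (\ref{e522}). The estimates from that lemma give (\ref{e523}), $[H']_{\rho-h,\de-h,s}\le \tilde\nu_s\eps_0$, and $\cC^s$ bounds on $Z'-\id$, $g'$ and $\p_r^2(H'-H)$ of the form $\tilde\nu_s=C_s\xi_s^{-3}\zeta_0(\zeta_0\eps_s^{\tilde H}+\zeta_s^{\tilde H}\eps_0^{\tilde H})$.

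It remains to combine: substituting the bounds on $[\tilde H-H]_{\rho,\de,s}$ and $\aa{\p_r^2\tilde H}_{\rho,\de,s}=\aa{\p_r^2\hat H}_{\rho,\de,s}$ into $\tilde\nu_s$ yields the announced $\nu_s=C_s\xi_s^{-5}\zeta_0^5(\eps_s+\zeta_s\eps_0+\eta_s\eps_0)$ (the two extra powers of $\xi_s^{-1}\zeta_0^2$ come from stacking the $\cL$-type losses of the two lemmas). The bound on $[W\circ Z'-W]_{\rho-h,\de-h,s}$ follows from $[Z'-\id]_{\rho-h,\de-h,0}<h/2$ and the composition estimate of Proposition \ref{annexe.compose} applied to $W$. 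Finally, preservation of order $q$ and flatness of $g'$ are both immediate from the corresponding last clauses of Lemmas \ref{sanslambda} and \ref{lambda}.

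The only non-routine point is the calibration of hypothesis (\ref{e521}): the extra factor $(1+\eta_1)$ there is exactly what is needed to absorb the $(\eta_0+1)$ factor produced by Lemma \ref{lambda} when we estimate $[\tilde H-H]_{\rho,\de,1}$, so that the output of that lemma still satisfies the smallness required to apply Lemma \ref{sanslambda}. Once this is seen, every other step is a mechanical combination of the two reduction lemmas already proved.
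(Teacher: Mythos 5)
Your overall strategy (first Lemma \ref{lambda} to produce $\Lambda$, then Lemma \ref{sanslambda} to produce $Z'$, $H'$ and the flat remainder, then combine estimates) is the same as the paper's, and your bookkeeping of the exponents in $\nu_s$ and the calibration of \eqref{e521} against the $(\eta_0+1)$ factor from Lemma \ref{lambda} are both on target. However, there is a genuine error in the very first move: you set $\hat H:=H+g$ and feed this into Lemma \ref{lambda}. The paper applies Lemma \ref{lambda} to $H$ alone, never to $H+g$.

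The reason this matters is that the flat function $g$ is \emph{not} part of the quantity that is being driven to zero by the scheme: it is accumulated garbage whose pseudonorm $[g]_{\rho,\de,s}$ (equivalently, the norms of its $a$- and $B$-components) stays bounded but does not shrink quadratically along the iteration. If you apply Lemma \ref{lambda} to $\hat H=H+g$, the output estimate controls $[\tilde H-\hat H]$ in terms of $\hat\eps_s=[\hat H]_s\gtrsim[g]_s$, and hence $[\tilde H]_1$ is of size $O([g]_1)$ rather than $O(\eps_1)$. This does \emph{not} satisfy the smallness hypothesis \eqref{e511} of Lemma \ref{sanslambda} once $n$ is large, because the right-hand side of \eqref{e511} involves $h_n^{2\tau+d+6}$ which decays exponentially while $[g_n]_1$ stalls at $O(\eps_0)$. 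The estimate you wrote, $[\tilde H-H]_{\rho,\de,s}\le C_s\zeta_0(\cdots\eps_s\cdots)$, is simply not what Lemma \ref{lambda} yields for the input $\hat H$, since the lemma's $\eps_s$ would then mean $[\hat H]_s$, not $[H]_s$, and the conclusion \eqref{e524}, $[H']\le\nu_s\eps_0$ with $\eps_0=[H]_0$, would fail. The correct procedure is to apply both lemmas to $H$ only, obtain $(\tilde H+\langle\omega,\cdot\rangle)\circ Z'=\langle\omega,r-c\rangle+H'+g''$, and then recover the target identity \eqref{e522} by adding $g\circ Z'$ to both sides, i.e.\ by defining $g':=g''+g\circ Z'$; the small increment $\aa{g'-g}$ is then estimated by $C_sh^{-1}\zeta_s\tilde\nu_s$ via Proposition \ref{annexe.compose}, while $g$ itself is never passed through the homological machinery and so never pollutes the $\eps$-norms. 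Your remark that \lq\lq$\zeta_s$ already includes $\aa g$\rq\rq\ is true but irrelevant to the obstruction, because the problematic quantity is $[g]_s$ entering $\eps_s$, not $\aa g_s$ entering $\zeta_s$.
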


\begin{Rem}
Notice that the assumption (\ref{e520}) follows if
$$\eta_{1}  <  \varsigma \frac1{\zeta_1},$$
and that
$$\nu_s\eps_0\le \nu_s\eps_1$$
and
$$
\nu_{s}\le C_s \xi_s^{-5}
\zeta_{1}^{5}  (\eps_s+\zeta_s\eps_{1}+ \eta_{s} \eps_{1}).$$
\end{Rem}

\begin{proof}
By Lemma \ref{lambda} there exists $\Lambda \in \CC^{\w,\i}_{\de}$,
\begin{equation*}
\aa{\Lambda}_{0,\delta,s}\le 
C_s\zeta_0(\zeta_0\eps_0\eta_s+\zeta_0\eps_s+\zeta_s\eps_0),
\end{equation*}
such that
$$ \tilde H=H+\langle \Lambda, \cdot \rangle \circ W$$
verifies $ M_{\tilde H} = 0$, 
\begin{equation*} 
[\tilde H]_{\rho,\delta,s}\le  \eps_s+
C_s\zeta_0(\zeta_0\eps_0\eta_s+\zeta_0\eps_s+\zeta_s\eps_0)= \tilde\eps_s
\end{equation*}
and
\begin{equation*} 
\tilde\zeta_s=\aa{\p_r^2 \tilde H}_{\rho,\delta,s}+\aa{g}_{\rho,\de,s}+1=\zeta_s.
\end{equation*}

Since by \eqref{e521} 
$$
\tilde\eps_1\le \varsigma \frac1{\tilde\zeta_1} \kappa^2 h^{2\t +d+6},$$
Lemma \ref{sanslambda} gives
$Z'\in \EE^{\w,\i}_{\rho-h,\de-h}$, $H' \in \cC^{\w,\i}_{\rho-h,\de-h}$ and a $(\k,\t)$-flat function 
$g''$ such that 
\begin{equation*}(\tilde H+ \langle \w, \cdot \rangle)  \circ Z' (\varphi,r,c,\o) = 
\langle \w, r-c\rangle + H'(\varphi,c,\w)+ g''(\f,r,c,\w),\end{equation*}
(modulo an additive constant that depends on $c,\o$) with, if we let 
$$\tilde \nu_{s}:=C_s \xi_s^{-3}\zeta_{0}  (\zeta_0 \tilde \eps_{s}+ \zeta_{s}  \tilde \eps_{0}),$$
$$[Z'-\id]_{\rho-h,\delta-h,0} <    \frac{h}{2},$$
$$ {[H']}_{\rho-h,\delta-h,s}\le \tilde \nu_s\tilde \eps_0$$
and
$$\max({[Z'-\id]}_{\rho-h,\delta-h,s},\aa{\p_r^2(H'-\tilde H)}_{\rho-h,\delta-h,s},\aa{g''}_{\rho-h,\delta-h,s})
\le\tilde \nu_s.$$

Since $g'=g\circ Z'+g''$ we get that   $g'$ is flat and Proposition \ref{annexe.compose} implies that
$$\aa{g'-g}_{\rho-2h,\delta-2h,s} \leq C_s h^{-1} \zeta_s \tilde \nu_s$$

If we write $W=\id+f$ and $Z'=\id+f'$, then
$$W\circ Z'-W=f'+(f\circ (\id+f')-f).$$
We have already seen that ${[f']}_{\rho-h,\delta-h,s}\le\tilde\nu_s$
and  Lemma \ref{mainlemma} and Proposition \ref{annexe.compose} imply
$${[f\circ(\id+f')-f]}_{\rho-2h,\delta-2h,s}\le (h^{2} \xi_s)^{-1}\tilde\nu_s.$$ 
The conclusions of the lemma then follow with $\nu_s$ as in \eqref{e526}.
\end{proof}

\subsection{Convergence of the KAM scheme}\label{s55} 

We will show in Section \ref{s56} that the inductive application of Proposition \ref{inductive} yields Proposition \ref{theo.kam}. Before this, we show in the current section two computational lemmas that will allow, under condition \eqref{e510} of Proposition \ref{theo.kam},  to apply inductively Proposition \ref{inductive} by checking conditions \eqref{e520} and \eqref{e521} 
at each step, and get the required estimates of Proposition \ref{theo.kam}. The first lemma deals with $C^1$ norms relative to $\omega$, while the second one contains the estimates relative to the higher order norms. 
 
\begin{Lem} \label{quadratic}  Fix $0<h<\frac12$ and let $h_n=h2 ^{-n-1}$. Let
$a,b,c$ and $C\ge 0$ and let there 
be given four non negative sequences $\nu_n,\zeta_n,\eta_n,\eps_n$ such that 
\begin{equation}\label{e527}
\nu_n \le  C \k^{-2b} h_{n}^{-2a} \zeta_n^{c}(\zeta_n+\eta_n) \eps_n
\end{equation}
for $n\ge0$ and
\begin{align} 
\label{e528}\zeta_n &\leq \zeta_{n-1} + \nu_{n-1}&& \zeta_0=\zeta\ge1 \\
\label{e529}\eta_n &\leq \eta_{n-1} +  \nu_{n-1}&& \eta_0=0 \\
\label{e530}\eps_n &\leq   \nu_{n-1} \eps_{n-1}&& \eps_0=\eps
\end{align}
for $n\ge1$.

Then there exists $C'=C'(C,a,b,c)>0$ such that if for $\varsigma\leq 1$  
\begin{equation}\label{e531}
\eps< \frac{\varsigma}{C'} \kappa^{2b+1} h^{2a+1}\zeta^{-c-2}\end{equation}
then \begin{align}
\label{e533} &\eps_n\leq  (\kappa h \zeta^{-1})^{2^n-1} \eps \\
\label{e534} &\eta_n <\varsigma \zeta_n^{-1}.
\end{align}
\end{Lem}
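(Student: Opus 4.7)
The plan is to prove \eqref{e533} and \eqref{e534} simultaneously by induction on $n$, carrying along the auxiliary bookkeeping inequality $\zeta_n \le 2\zeta$ in the inductive hypothesis. The base case $n=0$ is immediate from $\eps_0=\eps$, $\eta_0=0$, $\zeta_0=\zeta\ge 1$, provided we take $\varsigma\le 1$. Throughout I will write $u=\kappa h\zeta^{-1}$, which we may assume satisfies $u<1$ (else the conclusion \eqref{e533} is trivial).

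For the inductive step, I first want to collapse \eqref{e527} into the cleaner estimate
$$
\nu_n\;\le\;M\cdot 2^{2an}\,\eps_n,\qquad M:=C_2\,\kappa^{-2b}h^{-2a}\zeta^{c+1},
$$
for some $C_2=C_2(C,a)$. Indeed, the inductive bound $\zeta_n\le 2\zeta$ controls $\zeta_n^c$, the bound $\eta_n\le\varsigma\zeta_n^{-1}\le\varsigma\le 1$ together with $\zeta_n\le 2\zeta$ controls the factor $\zeta_n+\eta_n$ by $C_1\zeta$, and the substitution $h_n=h\,2^{-n-1}$ produces the polynomial loss $h_n^{-2a}=2^{2a(n+1)}h^{-2a}$, which I absorb into the constant $M$ together with a factor $2^{2a}$.

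The main technical step is to show that the polynomial loss $2^{2an}$ does not spoil the super-exponential decay asserted in \eqref{e533}. The cleanest route is a renormalization trick: set $z_n:=2^{2a}M\cdot 2^{2an}\eps_n$. The recursion $\eps_{n+1}\le M 2^{2an}\eps_n^2$ becomes the scale-free inequality $z_{n+1}\le z_n^2$, so that $z_n\le z_0^{2^n}$ with $z_0=2^{2a}M\eps$. Reverting to $\eps_n$ and using $u<1$, the desired inequality $\eps_n\le u^{2^n-1}\eps$ reduces, after elementary algebra, to
$$
\bigl(z_0/u\bigr)^{2^n-1}\le 2^{2an},
$$
which, since the left-hand side is bounded by $1$ and the right-hand side by $1$ as soon as $z_0\le u$, holds universally under the single condition $2^{2a}M\eps\le u$. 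Comparing with the definitions of $M$ and $u$, this is exactly a condition of the form \eqref{e531} once $C'$ is chosen large enough in terms of $C,a,b,c$. This polynomial-versus-super-exponential accounting is the main obstacle: the renormalization $z_n$ is what makes the argument work uniformly in $n$ rather than step by step.

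Given \eqref{e533}, the remaining conclusions come for free by summation. From \eqref{e529} and the $\nu_n$-bound,
$$
\eta_n\;\le\;\sum_{k=0}^{n-1}\nu_k\;\le\;M\eps\sum_{k\ge 0}2^{2ak}u^{2^k-1},
$$
and the series converges to some $S=S(a)=O(1)$ since $u^{2^k}$ dominates every polynomial in $k$. Thus $\eta_n\le CMS\eps$, and by absorbing $C,S$ into $C'$ the smallness assumption \eqref{e531} gives $\eta_n\le\varsigma/(2\zeta)\le\varsigma\zeta_n^{-1}$, proving \eqref{e534}. Exactly the same summation in \eqref{e528} yields $\zeta_n\le\zeta_0+CMS\eps\le 2\zeta$, closing the induction on the bookkeeping bound. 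The only care required is to fix the final constant $C'$ once and for all as the maximum of the finitely many constants arising in the three verifications above.
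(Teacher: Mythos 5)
Your proof is correct and takes essentially the same route as the paper: both carry the bookkeeping bounds $\zeta_n\le 2\zeta$, $\eta_n$ small through the induction, both reduce \eqref{e527} to $\nu_n\lesssim \kappa^{-2b}h^{-2a}\zeta^{c+1}2^{2an}\eps_n$, both absorb the polynomial factor $2^{2an}$ into the double-exponential decay (the paper by unwinding the recursion to $\eps_n\le B^{2^n-1}D^{2^n-n-1}\eps^{2^n}=(BD)^{-1}D^{-n}(BD\eps)^{2^n}$, you by the renormalization $z_n=2^{2a}M2^{2an}\eps_n$ satisfying $z_{n+1}\le z_n^2$, which amounts to the same algebra), and both close \eqref{e534} and $\zeta_n\le 2\zeta$ by summing $\nu_n$. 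Your renormalization is a clean way to organize the same computation; the only care I would add is that the uniform-in-$n$ bound on $S(a)=\sum_k 2^{2ak}u^{2^k-1}$ needs $u$ bounded away from $1$, which is automatic here since $h<\tfrac12$, $\zeta\ge 1$ and $\kappa\le 1$ give $u=\kappa h\zeta^{-1}<\tfrac12$.
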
 

\begin{proof}
Assume $\zeta_n\le A=2\zeta$ and $\eta_n\le 1$ for all $n$. Then
$$\nu_n\le BD^{n}\eps_{n}$$
with $B=C \k^{-2b} h^{-2a} 2^{2a+1} A^{c+1}$ and $D=4^{a}$. Hence for $n\geq 1$ we have 
$$\eps_n\le BD^{n-1}\eps_{n-1}^2
\le B^{2^n-1}D^{2^{n}-n-1}\eps^{2^n}=
\frac1{BD^{n+1}}(BD\eps)^{2^n}$$
which shows (\ref{e533}) if $C'$ is sufficiently large.

From \eqref{e533} we get that 
$$\sum_n\nu_n\le  \frac{\varsigma}{2\zeta}, $$
and the assumptions $\zeta_n\le A=2\zeta$ and $\eta_n\le 1$ --- actually (\ref{e534}), now follow by induction.
\end{proof}

\begin{Lem} \label{convergences}
Fix $0<h<\frac12$ and let $h_n=h 2 ^{-n-1}$. Let
$a,b,c\ge 0$ and suppose $\eps_n$ is a sequence satisfying (\ref{e533}) with $\eps=\eps_0$ verifying \eqref{e531} of Lemma \ref{quadratic}.
Assume that $C_s\ge 0$, and that four sequences 
$\nu_{s,n},\zeta_{s,n},\eta_{s,n},\eps_{s,n}$ satisfy 
\begin{equation}\label{e536}
\nu_{s,n} \le C_s \k^{-b(s+1)} h_{n}^{-a(s+1)}\zeta^c ( \eps_{s,n}+\zeta_{s,n} \eps_{n}
+\eta_{s,n} \eps_{n})
\end{equation}
for all $n\ge0$, and
\begin{align} 
\label{e537}\zeta_{s,n} &\leq \zeta_{s,n-1} + \nu_{s,n-1}&& \zeta_{s,0}=\zeta \geq 1 \\
\label{e538}\eta_{s,n} &\leq \eta_{s,n-1} + \nu_{s,n-1}&& \eta_{s,0}=0\\
\label{e539}\eps_{s,n} &\leq  \nu_{s,n-1}\eps_{n-1}&& \eps_{s,0}=\eps
\end{align}
for all $n\ge1$. Then 
\begin{equation} \label{e542}
\sum_{n\ge0}\nu_{s,n}\le \sigma(\k^{-1} h^{-1}\zeta)^{\alpha(s)}(\zeta+\eps)
\end{equation}
where $\sigma := \frac{\varsigma}{C'}$ of \eqref{e531} and   $\alpha(s)$ is some increasing function in $s$ depending on $C_s$ and $a,b,c$.
\end{Lem}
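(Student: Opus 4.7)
My plan mirrors the bootstrap/induction strategy used in Lemma~\ref{quadratic}, but now keeping careful track of the $s$-dependent polynomial growth in $\k^{-1}h^{-1}$ while exploiting the doubly-exponential decay of the base sequence $\eps_n$ from \eqref{e533}. Write $\beta:=\k h\zeta^{-1}<1$, so that the hypothesis reads $\eps_n\le\beta^{2^n-1}\eps$. The goal is to prove, by induction on $n$, that the partial sums $S_n:=\sum_{k=0}^{n}\nu_{s,k}$ stay below $\zeta$ (the \emph{bootstrap bound}) while simultaneously deriving that $\nu_{s,n}$ itself decays fast enough to be summable.

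For the inductive step, assume $S_{n-1}\le\zeta$. Then \eqref{e537}--\eqref{e538} give $\zeta_{s,n}\le 2\zeta$ and $\eta_{s,n}\le\zeta$, so that, after expanding $h_n^{-a(s+1)}=h^{-a(s+1)}\,2^{(n+1)a(s+1)}$, the assumption \eqref{e536} becomes
\begin{equation*}
\nu_{s,n}\le K_s\,2^{(n+1)a(s+1)}\bigl(\eps_{s,n}+3\zeta\,\eps_n\bigr),\qquad
K_s:=C_s\,\k^{-b(s+1)}h^{-a(s+1)}\zeta^{c}.
\end{equation*}
The key observation is that \eqref{e539} combined with \eqref{e533} yields $\eps_{s,n}\le\nu_{s,n-1}\,\beta^{2^{n-1}-1}\eps$, so we obtain a linear recursion of Gronwall type
\begin{equation*}
\nu_{s,n}\le q_n\,\nu_{s,n-1}+u_n,
\end{equation*}
with $q_n:=K_s\,2^{(n+1)a(s+1)}\beta^{2^{n-1}-1}\eps$ and $u_n:=3K_s\,\zeta\,2^{(n+1)a(s+1)}\beta^{2^n-1}\eps$.

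Because $\beta^{2^n}$ decays doubly exponentially, both $\sum_n q_n$ and $\sum_n u_n$ converge, each dominated by its first nonzero term up to a factor depending only on $a,b,c,s$ (the polynomial $2^{(n+1)a(s+1)}$ is crushed by $\beta^{2^{n-1}}$ for $n$ beyond a threshold depending on $s$). Hence $\sum_n\nu_{s,n}\le D_s\bigl(\nu_{s,0}+\sum_n u_n\bigr)\le D_s\cdot K_s\,\zeta\,\eps$ for a constant $D_s$. Plugging in the definition of $K_s$ and invoking the smallness \eqref{e531}, namely $\eps\le(\sigma/\varsigma)\,\k^{2b+1}h^{2a+1}\zeta^{-c-2}$, a direct bookkeeping of the exponents of $\k,h,\zeta$ produces
\begin{equation*}
\sum_{n\ge 0}\nu_{s,n}\le \sigma\,\bigl(\k^{-1}h^{-1}\zeta\bigr)^{\alpha(s)}(\zeta+\eps)
\end{equation*}
for an explicit $\alpha(s)$ growing linearly in $s$ (roughly $\alpha(s)\sim(a+b)(s+1)+\text{const}$). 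Choosing $\alpha(s)$ slightly larger if necessary, the same bound yields $S_n\le\zeta$, closing the bootstrap.

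The main technical obstacle is the apparent circularity in the definition of $\nu_{s,n}$: the term $\eps_{s,n}$ on the right of \eqref{e536} already involves $\nu_{s,n-1}$. This is resolved by the crucial quantitative point that the factor $\beta^{2^{n-1}-1}\eps$ multiplying $\nu_{s,n-1}$ is itself doubly-exponentially small; in particular $\sum_n q_n$ is finite and in fact tiny, so the ``feedback'' from $\eps_{s,n}$ into $\nu_{s,n}$ is a harmless perturbation and the recursion is effectively driven by the inhomogeneous term $u_n$, which is controlled purely in terms of $\eps_n$.
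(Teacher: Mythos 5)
Your approach has a genuine gap. The bootstrap hypothesis $S_{n-1}=\sum_{k<n}\nu_{s,k}\le\zeta$, which you use to derive $\zeta_{s,n}\le 2\zeta$ and $\eta_{s,n}\le\zeta$, is not justified and is in fact false for large $s$. Already the very first term fails: from \eqref{e536},
$\nu_{s,0}\lesssim C_s\,\k^{-b(s+1)}h^{-a(s+1)}\zeta^{c+1}\eps$, and plugging in the smallness \eqref{e531}, namely $\eps\lesssim\k^{2b+1}h^{2a+1}\zeta^{-c-2}$, gives $\nu_{s,0}\lesssim C_s\,\k^{b(1-s)+1}h^{a(1-s)+1}\zeta^{-1}$. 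For $s\ge 2$ and $a,b>0$ the exponents of $\k,h$ are negative, so $\nu_{s,0}$ can far exceed $\zeta$ when $\k,h$ are small. So $S_0>\zeta$ already, the induction never gets off the ground, and the Gronwall decoupling $\nu_{s,n}\le q_n\nu_{s,n-1}+u_n$ is not valid because $\zeta_{s,n}$ may be much larger than $2\zeta$. Note also that the target estimate \eqref{e542} itself has the $s$-dependent prefactor $(\k^{-1}h^{-1}\zeta)^{\alpha(s)}$ which is generally $\gg 1$, so the conclusion you are trying to prove is incompatible with $S_n\le\zeta$. Your closing remark, "choosing $\alpha(s)$ slightly larger if necessary, the same bound yields $S_n\le\zeta$", is backwards: enlarging $\alpha(s)$ only loosens your final bound and cannot be used to force the partial sums below $\zeta$.

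The paper avoids this by not insisting on an $s$-independent bootstrap. Instead, one first proves by direct induction the \emph{crude} bound $\max(\eps_{s,n},\zeta_{s,n}-\zeta,\nu_{s,n})\le\sigma U_s^{n+1}(\zeta+\eps)$ with $U_s\sim\k^{-b(s+1)}h^{-a(s+1)}\zeta^{c}$, which allows geometric growth in $n$. Then one uses the doubly exponential decay $\eps_n\le(\k h\zeta^{-1})^{2^n-1}\eps$ from \eqref{e533} to show that for $n\ge N(s)$ (with $N(s)\gg\log C_s+\log(s+1)$) the quantity $\nu_{s,n}$ is summable with geometric tail, and then sums the head ($n\le N(s)$, bounded by $N(s)\,\sigma U_s^{N(s)+1}(\zeta+\eps)$) and the tail separately to obtain \eqref{e542} with $\alpha(s)=(s+1)N(s)^2$. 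This two-phase argument --- allow exponential growth, then let the super-exponential decay of $\eps_n$ take over --- is the correct way to resolve the feedback between $\zeta_{s,n}$ and $\nu_{s,n-1}$, and is the step missing from your proposal.
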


\begin{proof} By replacing $\zeta_{s,n}$ by $\zeta_{s,n}+\eta_{s,n}$
we see that it is enough to consider the case $\eta_{s,n}=0$ for all $n$. 

If we let $U_s=\bar{C} \kappa^{-b(s+1)} h^{-a(s+1)}\zeta^{c}$,
with $\bar{C}(s,a,b,c,C)>0$ sufficiently large,  then it is immediate by induction that
\begin{equation} \label{nuu} \max(\eps_{s,n},\zeta_{s,n}-\zeta,\nu_{s,n}) \leq \sigma U_s^{n+1} (\zeta+\eps)
\end{equation}
Thus, if $n \geq N(s) \gg \max (\log(s+1),\log C_s)$, (\ref{e539})  and  (\ref{nuu}) and \eqref{e533} imply that 
\begin{align*}  C_s \k^{-b(s+1)} h_{n}^{-a(s+1)}\zeta^c \eps_{s,n}&\leq \sigma U_s^{2n+1}  (\kappa h \zeta^{-1})^{2^{n-1}-1} \eps   (\zeta+\eps) \leq  \frac{\sigma}{2^n}  (\zeta+\eps) \\ 
 C_s \k^{-b(s+1)} h_{n}^{-a(s+1)}\zeta^c \zeta_{s,n} \eps_n&\leq \sigma U_s^{2n+1}  (\kappa h \zeta^{-1})^{2^n-1} \eps  (\zeta+\eps) \leq  \frac{\sigma}{2^n}  (\zeta+\eps)  \end{align*}
hence, for $n\geq N(s)$ we get that 
\begin{equation} \label{nuu2} 
\nu_{s,n} \leq  \frac{\sigma}{2^{n-1}}  (\zeta+\eps)
\end{equation} and (\ref{e542}) follows, with $\a(s)= (s+1) (N(s))^2$, if we sum  (\ref{nuu}) for $n$ from $0$ to $N(s)$ and (\ref{nuu2}) for $n\geq N(s)$.
\end{proof}

\subsection{Proof of Proposition \ref{theo.kam}}\label{s56}   

We now prove Proposition  \ref{theo.kam} from an inductive application of Proposition \ref{inductive}. 

Let $h<\min(\rho,\delta)/2$, $h_n=h 2 ^{-n-1}$ and define
$$\rho_n= \rho-\sum_{i< n} h_i,\quad \de_{n}= \de-\sum_{i< n} h_i$$
and
$$\xi_{s,n}=\k^{(s+1)}  h_n^{(\t+1)(s+1)+d}.$$

We start by setting $H_0=H$, $\Lambda_0=0$, $g_0=0$  and $W_0=\id$, and we shall define inductively $H_n$, $\Lambda_n$, $Z_n$ and $W_n=Z_0\circ Z_1\circ \dots \circ Z_n$. In light of (\ref{e526}), let
$$\eps_{s,n} ={[H_n]}_{\rho_n,\delta_n,s},\quad
\zeta_{s,n} = \aa{\p_r^2 H_n}_{\rho_n,\de_n,s}+ \aa{g_n}_{\rho_n,\de_n,s}+1$$
$$
\eta_{s,n} ={[W_n-\id]}_{\rho_n,\delta_n,s}, \quad \nu_{s,n}=C_s \xi_{s,n}^{-5}
\zeta_{0,n}^{5}  (\eps_{s,n}+\zeta_{s,n}\eps_{0,n}+ \eta_{s,n} \eps_{0,n})
$$
where $C_s$ is given by Proposition \ref{inductive}. 
We fix hereafter $a=5(\tau+1+d), b=5,c=5$ and we will apply  Lemmas \ref{quadratic} and  \ref{convergences} with these values and with $C_s$ as in Proposition \ref{inductive}, while $C$ of Lemma \ref{quadratic} is just $C_1$. As for $\varsigma$, we will take it as $\varsigma(\tau)$ of Proposition \ref{inductive}.

Note also that for $s=0$, the fact that $H_0=H$ does not depend on $\omega$, hence $\zeta_{s,0}=\zeta_{0,0}=\zeta$, $\eps_{s,0}=\eps_{0,0}=\eps$ as required by Lemma \ref{convergences}.  By assumption also we have $\eta_{s,0}=0$.   To finish with the initial conditions, it follows from (\ref{e510}) that $\eps$ verifies conditions (\ref{e531}), provided $\eps(\tau)$ of Proposition \ref{theo.kam} is taken sufficiently small.

Based on (\ref{e524}--\ref{e526}), we assume by induction that, for $j=0,\dots, n$, $\nu_{s,j},\zeta_{s,j},\eta_{s,j},\eps_{s,j}$
verify (\ref{e527}--\ref{e530}) for $s=1$, and  (\ref{e536}--\ref{e539})  for 
$s\ge1$. 

Then, by  (\ref{e533}) and (\ref{e534}) of Lemma \ref{quadratic} we verify that, at each step $n$, conditions (\ref{e520}) and (\ref{e521}) of Proposition \ref{inductive} are satisfied 
so that we can apply the latter proposition and get $\Lambda_{n+1} \in \CC^{\w,\i}_{0,\de_{n+1}}$, $Z_{n+1}\in \EE^{\w,\i}_{\rho_{n+1},\de_{n+1}}$, 
$H_{n+1} \in \cC^{\w,\i}_{\rho_{n+1},\de_{n+1}}$ and  $g_{n+1} \in  \cC^{\w,\i}_{\rho_{n+1},\de_{n+1}}$ $(\k,\t)$-flat such that 
 \begin{multline*}
(H_n+ \langle \o, \cdot \rangle +\langle \Lambda_{n+1}, \cdot \rangle \circ W_n +g_n )\circ Z_{n+1}(\varphi,r,c,\o) = \\
\langle \w, r-c \rangle +H_{n+1}(\varphi,c,\w) +  g_{n+1}(\f,r,c,\w)\end{multline*} 
(modulo an additive constant). Moreover,  by (\ref{e524}--\ref{e526}) we have that 
\begin{multline}\label{e543}
\max(\aa{\Lambda_{n}}_{\rho_{n},\delta_{n},s},\aa{\p_r^2(H_{n}-H_{n-1})}_{\rho_{n},\delta_{n},s},\\
\aa{g_{n}-g_{n-1}}_{\rho_{n},\delta_{n},s},
{[Z_n-\id]}_{\rho_n,\delta_n,s},[W_{n-1}\circ Z_n-W_{n-1}]_{\rho_n,\delta_n,s}) \leq \nu_{s,n}\end{multline}
and that $\nu_{s,n+1},\zeta_{s,n+1},\eta_{s,n+1},\eps_{s,n+1}$
satisfy (\ref{e527}--\ref{e530}) for $s=1$, and  (\ref{e536}--\ref{e539})  for 
$s\ge1$.
Finally, \eqref{e542} gives that 

\begin{equation*} \label{e5422}
\sum_{n\ge0}\nu_{s,n}\le \sigma(\k^{-1} h^{-1}\zeta)^{\alpha(s)}(\zeta+\eps)
\end{equation*}
which together with (\ref{e543}) show that $\sum \Lambda_{l} = \Lambda \in \CC^{\w,\i}_{{\de}-h}$, $W_{n}$ converges to $ W \in \EE^{\w,\i}_{{\rho}-{h},{\delta}-{h}}$,  and $H_{n}$ converges to  $H' \in \cC^{\w,\i}_{{\rho}-{h},{\delta}-{h}}$,  and  $g_{n}$ converges to a  $(\k,\t)$-flat function $g \in  \cC^{\w,\i}_{{\rho}-{h},{\delta}-{h}}$ such that : $[H']_{{\rho}-{h},{\delta}-{h}}=0$ and $\Lambda,W,H',g$ satisfy (\ref{ered}) and (\ref{etailles}) of Proposition \ref{theo.kam}. 

This completes the proof of Proposition \ref{theo.kam}  --  except for the last analyticity 
statement. However, if  
$$\o_0\in DC(2{\k},\t)$$
then the same proof, for $s=0$, 
applied to functions in $\CC_{\rho,\delta,\delta}^{\w}$, i.e.
functions $f\in C^{\w}(\T^d_\rho \times \bD_{\delta}^d \times \bD_{\delta}^d\times \bD_{\delta})$
such that
$f(\f,r,c,\o)\in \OO^2(r,c)$
yields the analyticity of $\Lambda$, $W$ and $H'$ on $I_{\de'}$ for some $0<\de'\leq \de$. 
\subsection{Proposition \ref{theo.kam} implies Proposition \ref{counterthm}}\label{s57}

Denote by $\tilde\al(s)$ the sequence of constants in Proposition  
\ref{theo.kam}   --  
 { we can assume
without restriction that 
$$\tilde\al(s)\ge (s-t)+\tilde\al(t),\quad s\ge t,$$
-- and let
$$\a(s)=\tilde\al(s)+1+\ga, \quad \ga=10(\tau+d)+11.$$
} 
Let
$$H(\f,r)=N^q(r)+\cO^{q+1}(r),\quad q\ge 1+\al(1)$$
with $N^q(r)=\l{\o_0,r}+\cO^2(r)$. Then
\begin{multline*}
\tilde H(\f,r,c)=:H(\f,r)-N^q(c)-\l{\p_r N^q(c),r-c}\\
=a(\f,c)+\l{B(\f,c),r-c}+\cO^2(r-c)
\end{multline*}
with $a\in\cO^{q+1}(c)$ and $B\in\cO^q(c)$ { (which means in particular that $H$ is of order $q$). }
Now there exist $3\rho\ge3\de>0$ such that for all $\eta\le\de$
$$
[\tilde H]_{3\rho,3\eta,0}<C\eta^{q}$$
  ---  the constants  $\rho,\de$ and $C$ only depend on $H$.

 { Define $\sigma=\s(\eta)$ such that 
$$C\eta^{q}=\sigma\frac1{(1+\aa{\p_r^2\tilde H}_{3\rho,3\eta,0})^7}\k^{11}\eta^{\ga},$$
and note that there is a constant $C'=C'(H,\tau)$ such that if
$$\eta\le C'\k^{\frac{11}{q-\ga}},$$
then $ \sigma\le\eps(\tau)$.
}

By Proposition \ref{theo.kam} there exist $\tilde \Lambda \in \CC^{\w,\i}_{2\eta}$ and  $W\in \EE^{\w,\i}_{2\rho,2\eta}$,    
and  a $(\k,\t)$-flat function  
$g \in \cC^{\w,\infty}_{2\rho,2\eta}$   {  such that $g\in\cO^q(c)$}
$$(\tilde H+\langle \o+ \tilde \Lambda(c,\o), \cdot \rangle) \circ W_{c,\o} (\varphi,r) = 
\langle \w, r-c \rangle +  \cO^2(r-c) +  g(\f,r,c,\w)$$
(modulo an additive constant that depends on $c,\o$). Moreover, for all $s\in\N$, (\ref{etailles}) implies\footnote{\ the value of $C_s$ will change from line too line}
\begin{equation} \label{hey1}
\max(\aa{\tilde \Lambda}_{0,2\eta,s}, {[W-\id]}_{2\rho,2\eta,s}) 
< C_s\frac{\eta^{q-\ga}}{\k^{11}}(\frac{1}{\k \eta})^{\tilde \alpha(s)}.
\end{equation}

Hence if we set  $\Lambda(c,\o)=\tilde \Lambda(c,\o)-\partial_r N^q(r)$ we get that 
$$(H+\langle \o+ \Lambda(c,\o), \cdot \rangle) \circ W_{c,\o} (\varphi,r) = 
\langle \w, r-c \rangle +  \cO^2(r-c) +  g(\f,r,c,\w)$$
and, for all $s\in\N$,
\begin{equation}
\aa{\Lambda+\p_r N^q}_{0,2\eta,s}
\le C_s\frac{\eta^{q-\ga}}{\k^{11}}(\frac{1}{\k \eta})^{\tilde\alpha(s)}
\le C_s\eta^{q}(\frac{1}{\k \eta})^{\alpha(s)}.
\end{equation}

{\it The generating function.} By Proposition \ref{sympl-exact}, the diffeomorphism 
$$W(\f,r,c,\o)=(\f +\Phi(\f,c,\o),r+R_1(\f,c,\o)+R_2(\f,c,\o)(r-c))$$
has a generating function $f(\psi,r,c,\o)=f_0(\psi,c,\o)+\l{f_1(\psi,c,\o),r-c}$
$$\left\{\begin{array}{l}
s=r+ \partial_\psi f \\
\f=\psi+\p_r f=\psi+f_1. \end{array}\right.$$

If
$$\eta\le C''(H,\tau)\k^{\frac{11+\tilde\al(1)}{q-(1+\ga+\tilde\al(1))}},$$
then \eqref{hey1} implies 
$$
\aa{\Phi}_{2\rho,2\eta,1}
\le C_1\frac{\eta^{q-\ga}}{\k^{11}}(\frac{1}{\k \eta})^{\tilde\alpha(1)}\lsim \eta,
$$
and, by Proposition \ref{annexe.inverse},
$$
\aa{f_1}_{\rho,\eta,s}
\le C_s \frac{\eta^{q-\ga}}{\k^{11}}(\frac{1}{\k \eta})^{\tilde\alpha(s)}.
$$
Moreover, by Proposition \ref{annexe.compose},
$$
\aa{f_0}_{\rho,\eta,s}
\le C_s \frac{\eta^{q-\ga}}{\k^{11}}(\frac{1}{\k \eta})^{\tilde\alpha(s)},
$$
so
\begin{equation}
\aa{f}_{\rho,\eta,s}\le C_s\frac{\eta^{q-\ga}}{\k^{11}}(\frac{1}{\k \eta})^{\tilde\alpha(s)}.
\end{equation}

To conclude we observe that
$$\frac{\eta^{q-\ga}}{\k^{11}}(\frac{1}{\k \eta})^{\tilde\alpha(s)}\le
\eta^{q}(\frac{1}{\k \eta})^{\alpha(s)},$$
and that
$$\k^{\frac{\al(1)}{q-\al(1)}}\le 
\min(\k^{\frac{11}{q-\ga}},\k^{\frac{11+\tilde\al(1)}{q-(1+\ga+\tilde\al(1))}}).$$

Finally, point (iii) of  Proposition \ref{counterthm} is implied by the last statement of Proposition \ref{theo.kam}.  \carre

\section{KAM stability for Liouville tori}\label{sLiouville}


{In this section we give a sketch of the proof of  Theorem  \ref{liouville.twist} which claims 
KAM stability of a Liouville torus with a non-degeneracy condition of Kolmogorov
type. Notice that since the frequency vector is Liouville we don't have any Birkhoff normal form in general.

By assumption there exist a $\g>0$ and an increasing sequence $Q_n$  such that
$$ |\<k,\o_0\>|\ge \frac{1}{|Q_n|^{\t}}\quad \forall k\in\Z^d\sm \{0\},\ |k|\le Q_n.$$

\medskip  

\begin{lemma} \label{l91}
Let $H \in \cC^{\o}(\bT^d_{\rho}\times\bD^e_\de)$ be of the form (\ref{HH}) and let $q$ be fixed.

For any $n$ sufficiently large (depending on $H$ and $q$) , there exists an exact symplectic local diffeomorphism
$$Z(\f,r)=(\f+\cO(r),r+\cO^2(r))$$
defined in  $T^d_{\rho'}\times\bD^e_{\de'}$ where
$$\rho'\ge \rho/4\quad\textrm{and}\quad \de' \ge Q_n^{-2\ga}$$
such that 
$$H \circ Z (\varphi,r) = N^q(r)+F(\f,r) + R$$
with $N^q(r)=\l{\o_0,r}+\cO^2(r)$ and $F\in\cO^{q+1}(r)$ and
\begin{align}
|F|_{\rho',\delta'}+|N^q|_{\de'}& \le Q_n^{2\gamma q} \\
|R|_{\rho',\delta'}&\leq e^{-\sqrt{Q_n}} \label{rest} \\
\label{nondeg} \aa{\frac{\partial^2 N }{\partial r^2 } (0)-M_0}&\leq e^{-\sqrt{Q_n}}  
\end{align}
\end{lemma}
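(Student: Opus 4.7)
The plan is to run a standard Birkhoff normal form procedure up to order $q$ in the action variable $r$, adapted to the Liouville setting by a Fourier cutoff in $\varphi$ at scale $Q_n$. Since the hypothesis provides $|\langle k,\omega_0\rangle|\geq Q_n^{-\gamma}$ for all nonzero $k$ with $|k|\leq Q_n$, the relevant cohomological equations can be solved with divisors bounded below by $Q_n^{-\gamma}$, at the cost of a polynomial-in-$Q_n$ loss in the analytic norms.

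Concretely, I would first split $H=H^{\leq}+H^{>}$ into its Fourier truncation at $|k|\leq Q_n$ and the high-frequency tail; by analyticity of $H$ on $\bT^d_\rho$, one has $|H^{>}|_{\rho/2,\delta}\lesssim e^{-Q_n\rho/2}$, and this tail is destined to be absorbed into the remainder $R$. Then iteratively, for $j=2,\dots,q$, I construct an exact symplectic map $Z_j=\Phi^1_{\chi_j}$ generated by $\chi_j(\varphi,r)$, polynomial and homogeneous of degree $j$ in $r$, with Fourier support in $\{|k|\leq Q_n\}$, solving the cohomological equation that kills the $\varphi$-dependent part of the current degree-$j$ homogeneous coefficient in $r$. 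Each such step shrinks the analyticity width in $\varphi$ by $\rho/(4q)$ and introduces a multiplicative loss of order $Q_n^{c\gamma}$ in the analytic norms, so that after $q$ steps the total loss is at most $Q_n^{Cq\gamma}$ and the final width $\rho'\geq \rho/4$ is preserved as claimed.

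The action domain is restricted to $|r|<\delta'\sim Q_n^{-2\gamma}$, which is small enough that each $\chi_j=\cO(r^j)$ with $j\geq 2$ generates a flow staying inside the domain and that the cumulative polynomial blowup fits the stated bound $|F|_{\rho',\delta'}+|N^q|_{\delta'}\leq Q_n^{2\gamma q}$. The composition $Z=Z_2\circ\cdots\circ Z_q$ then yields $H\circ Z=N^q(r)+F(\varphi,r)+R(\varphi,r)$, with $F\in\cO^{q+1}(r)$ gathering the higher-order oscillating terms and $R$ being essentially $H^{>}\circ Z$ together with the Lie-series truncation error. The bound $|R|_{\rho',\delta'}\leq e^{-\sqrt{Q_n}}$ then follows from $|H^{>}|\lesssim e^{-Q_n\rho/2}$ and from Cauchy-type control of the near-identity map $Z$, which can degrade the exponent only by a polynomial factor in $Q_n$, and $e^{-Q_n\rho/4}\ll e^{-\sqrt{Q_n}}$ for $n$ large.

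Finally, for the non-degeneracy estimate (\ref{nondeg}), I use that $M_0=\int\partial_r^2 H(\varphi,0)\,d\varphi$ is the $k=0$ Fourier coefficient, hence unaffected by the truncation $H\mapsto H^{\leq}$. Since each $\chi_j\in\cO(r^j)$ generates an exact symplectic near-identity map whose $r$-component at $r=0$ is, to leading order, $-\partial_\varphi\chi_j$ and therefore has zero $\varphi$-average, the averaged Hessian of $H^{\leq}$ at $r=0$ is preserved by the $Z_j$'s up to higher-order Lie series corrections; the residual contribution to $\partial_r^2 N(0)-M_0$ comes from $R$ via its second $r$-derivatives, estimated by $|R|/(\delta')^2 \lesssim e^{-Q_n\rho/4}Q_n^{4\gamma}$, which is still much smaller than $e^{-\sqrt{Q_n}}$. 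The main technical obstacle is the bookkeeping: one must track the polynomial factors arising from the $q$ successive small-divisor inversions carefully enough to close the inductive estimates in the shrinking domains, and to confirm that the compositions at each step of the iteration remain well-defined on $\bT^d_{\rho'}\times\bD^d_{\delta'}$ with the claimed explicit bounds.
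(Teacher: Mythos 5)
The high-level strategy is right — Fourier-truncate, run a finite Birkhoff normal form to order $q$, track polynomial losses from small divisors, and absorb the tail into $R$ — but there is a concrete error in the truncation level that makes the proof as written fail, and the paper handles exactly this point differently.

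You truncate at $|k|\le Q_n$ and then ask each $\chi_j$ (equivalently each $f_j$) to have Fourier support in $\{|k|\le Q_n\}$. The difficulty is that the \emph{right-hand side} of the degree-$j$ cohomological equation is built from products of the lower-order data, so its Fourier support grows with $j$: already at $j=3$ one encounters modes in $(Q_n,2Q_n]$, and at degree $j$ modes up to $(j-1)Q_n$. The hypothesis gives $|\langle k,\omega_0\rangle|\ge Q_n^{-\gamma}$ \emph{only} for $|k|\le Q_n$; for a vector with $\widehat\omega(\omega_0)<\infty$ but Liouville there is no control whatsoever on divisors with $Q_n<|k|\le qQ_n$ along the sequence $n$. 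One also cannot dump those modes into $F$ (they are of low degree in $r$) nor into $R$ (they are not exponentially small — they are only polynomially bounded). This is a genuine obstruction, not a bookkeeping issue. The paper's proof avoids it by truncating at $Q_n'=Q_n/q$ rather than at $Q_n$: then the degree-$j$ homological equation has RHS supported in $|k|\le (j-1)Q_n'\le Q_n$, all divisors are controlled by $Q_n^{-\gamma}$ throughout the finite induction, and the exponential tail $|\widetilde R|\lesssim e^{-Q_n\rho/(2q)}$ is still $\ll e^{-\sqrt{Q_n}}$ for $n$ large. Your estimates on $F$, $N^q$, $\delta'$ and $R$ would all go through once this is corrected. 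As a secondary remark, the non-degeneracy bound \eqref{nondeg} does not need the Lie-series-plus-$R$ argument you sketch: the degree-two part of the normal form is exactly $N_2^q(r)=\MM(\widetilde H_2(\cdot,r))$, and since Fourier truncation preserves the zero mode, $\partial_r^2 N^q(0)=M_0$ identically, so \eqref{nondeg} is trivially satisfied.
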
 

\begin{proof} Truncate the Fourier coefficients of $H$ at order
$|k|\le Q_n'=\frac{Q_n}q$ to get $\widetilde H$ and   $H=\widetilde H+\widetilde R$.
Then
$$|\widetilde H|_{\rho/2,\delta}\le C $$
and
\begin{equation} \label{rtilde} |\widetilde R|_{\rho/2,\delta}\le C e^{-Q'_n\rho/2} \end{equation} 

Apply now Birkhoff reduction up to order $q$ to $\widetilde H$, for example as in Proposition \ref{bnfII}  with $c=r$. Indeed, the equations \eqref{cf2} (for degree $j=2$) or \eqref{cfn} (for general degree $j$) that must be solved in the construction of the degree $j$ monomials in the BNF of $\widetilde H$ involve  trigonometric polynomials on their right hand side of degree at most $(j-1) Q_n'$, $j\leq q$. Hence, we get the following estimates:

\begin{itemize}
\item 
$$|\widetilde H_j|_{\rho/2,\delta}\le C'_j$$
and
$$\displaystyle \minn_{  0<|k|\leq Q_n}  |(k,\omega_0)| \geq Q_n^{-\gamma};$$
\item it follows by a finite induction
that
$$|\G_j|_{\rho/2,\delta},\ |\Om_j|_{\rho/2,\delta},\ |G_j|_{\rho/2,\delta}\le C''_jQ_n^{(j-1)\ga}$$
and for $j\le q$
$$|f_j|_{\rho/2,\delta}\le C''_jQ_n^{j\ga};$$
\item then $Z$, implicitly defined by
$$\left\{\begin{array}{l}
\f=\psi+\frac{\p f}{\p r}(\psi,r)\\
s=r+\frac{\p f}{\p\psi}(\psi,r)
\end{array}\right. \quad f=f_2+\dots+ f_q,$$
is defined in $T^d_{\rho/4}\times\bD^e_{\de'}$ where
$$\de' \ge CQ_n^{-\ga};$$
\item $ R= \widetilde R \circ Z (\varphi,r) $ satisfies \eqref{rest} due to \eqref{rtilde} and the control on the $f_j$'s, 
\item moreover
$N^q_2(r)=\MM(\widetilde H_2(\cdot,r))$
which implies \eqref{nondeg}.
\end{itemize}
\end{proof}

\subsection{Proof of theorem D}

Fix $q=60(2d+\alpha(1)+5)$, where $\alpha(1)$ is the exponent that appears in \eqref{etailles} of Proposition \ref{theo.kam}, and apply Lemma \ref{l91} to find 
$$\bar H(\f,r)= H \circ Z (\varphi,r) = N^q(r)+F(\f,r) + R.$$
Write
\begin{multline*}
\widetilde H(\f,r,c)=:H\circ Z(\f,r)-N^q(c)-\l{\p_r N^q(c),r-c}\\
=a(\f,c)+\l{B(\f,c),r-c}+\cO^2(r-c)
\end{multline*}
with $a\in \cO^2(c)$ and $B\in\cO(c)$, i.e. $\widetilde H$ is of order $1$.

Observe that, with $\delta_n=Q_n^{-\gamma q^2}$, we have
$$[\widetilde{H}]_{\rho',\delta_n,0}\leq C(\delta_n^q Q_n^{2\gamma (q+1)} + e^{-\sqrt{Q_n}}) $$
which is $\le Q_n^{-\gamma q^3/2}=\de_n^{q/2}$ if $n$ is large enough.
 
If $\kappa_n= \delta_n^2$ and  $\tau$ is $=d$, say, then
\begin{equation} \label{Hliouville}
 [\widetilde{H}]_{\rho',\delta_n,0} \leq 
 \de_n^{q/3}{\kappa_n^{11} \delta_n}^{10(\tau+d)+11} 
 \frac1{(1+\aa{\p_r^2\widetilde{H}}_{\rho',\de_n,0})^7} \end{equation}
provided $n$ is sufficiently large. That is, \eqref{e510} is satisfied by 
$\widetilde{H}$ with $\sigma\le\de_n\le \eps(\tau)$ when $n$ is large enough.

Hence Proposition \ref{theo.kam} applies with our choice of $\kappa_n, \delta_n$ and $h=\delta_n/2$,  
yielding  $\Lambda \in \CC^{\w,\i}_{0,\de_n/2}$ and  $W\in \EE^{\w,\i}_{\rho/2,\de_n/2}$,    and  a $(\k_n,\t)$-flat function $g \in \cC^{\w,\infty}_{\rho/2,\de_2/2}$  such that

\begin{equation} \label{conj.liouv} (\bar{H}+\langle \o+ \bar{\Lambda}(c,\o), \cdot \rangle) \circ W_{c,\o} (\varphi,r) = 
\langle \w, r-c \rangle +  \cO^2(r-c) +  g(\f,r,c,\w) \end{equation}
(modulo an additive constant that depends on $c,\omega$), where we have set
$$\bar{\Lambda}(c,\o)= \Lambda(c,\omega)-\p_r N^q(c).$$
Notice that  $\Lambda(0,\om)=\om$ and that,
from \eqref{etailles} and the fact that $\sigma\le \de_n^{q/3}$, we get 
\begin{equation} \label{estimate}
\aa{\bar{\Lambda}+\p_r N^q}_{0,\delta_n/2,1} \le \de_n^2.
\end{equation}

Let $\Psi(\w,c)= \o+ \bar{\Lambda}(c,\o)$. Then $\Psi(\w_0,0)=0$ and from
\eqref{estimate} we have that 
\begin{align}
\aa{\frac{\partial \Psi}{\partial \o}(\o_0,0)-I}&\leq \de_n^2 \\
\aa{\frac{\partial \Psi}{\partial c}(\o_0,0)-M_0}&\leq 2\de_n
\end{align}
By the implicit function theorem, there exists a constant  $C(M_0)$ (that only depends on $M_0$) and a function $S: B(\o_0,C(M_0)\delta_n) \to B(0,\delta_n/2)$, such that 
$$\Psi(\o,S(\o))=0$$
Moreover $S$ is of class $C^1$ and $dS\sim M_0^{-1}$. A simple computation shows that  the set of frequencies in   $B(\o_0,C(M_0)\delta_n)$ that are $(\kappa_n, \tau)$-Diophantine has measure larger than $(1-\delta_n) \text{Leb}(B(\o_0,C(M_0)\delta_n))$ (recall that we took $\kappa_n=\delta_n^2$). 

This concludes the proof of Theorem  \ref{liouville.twist} because \eqref{conj.liouv} and the $(\kappa_n, \tau)$-flatness of $g$ imply that for any $\o \in B(\o_0,C(M_0)\delta_n) \cap \text{CD}(\kappa_n,\tau)$,  $\T^d \times \{S(\o)\}$ is an invariant KAM torus for $\bar H \circ W_{S(\o),\o}$.
}

\section{Appendix. Composition and inversion estimates.}\label{sApp}

In this Appendix we give the useful estimates for our KAM scheme.

\subsection{Convexity estimates}

\begin{Prop} \label{hadamard} 
Let $f,g \in \CC^{\w,\infty}(\T^d_\rho\times \bD^{d'}_\delta,B)$. Then

\begin{itemize}
\item[(i)]
$$
||f||_{\rho,\de,s}\le C_{s_1,s_2}||f||_{\rho,\de,s_1}^{a_1}||f||_{\rho,\de,s_2}^{a_2}$$
for all non-negative numbers $a_1,a_2,s_1,s_2$ such that
$$
a_1+a_2=1,\quad s_1a_1+s_2a_2=s.$$

\item[(ii)]
$$
||fg||_{\rho,\de,s}\ \le\ C_s(||f||_{\rho,\de,s}||g||_{\rho,\de,0}+||f||_{\rho,\de,0}||g||_{\rho,\de,s})$$
for all non-negative numbers $s$.
\end{itemize}
\end{Prop}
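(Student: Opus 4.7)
The plan is to prove (i) by reduction to the classical one-dimensional Landau--Kolmogorov inequality, and then derive (ii) from (i) via the Leibniz rule combined with a weighted AM--GM step.

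For (i), I would first fix a point $(\varphi, z) \in \T^d_\rho \times \bD^{d'}_\delta$ and apply the classical Hadamard--Landau--Kolmogorov convexity inequality to the smooth function $\omega \mapsto f(\varphi, z, \omega)$ on the bounded convex set $B \subset \R^d$. The multi-variable version on $B$ follows from the one-dimensional Kolmogorov inequality applied along line segments in $B$, together with the identity expressing each partial derivative $\partial^\alpha_\omega$ of order $|\alpha| = s$ as a linear combination of iterated directional derivatives along finitely many fixed unit vectors. This yields, for non-negative integers with $s = a_1 s_1 + a_2 s_2$ and $a_1 + a_2 = 1$,
\[
\max_{|\alpha| \leq s} |\partial^\alpha_\omega f(\varphi, z, \omega)|
\leq C_{s_1, s_2} \left( \max_{|\alpha| \leq s_1} \sup_{\omega \in B} |\partial^\alpha_\omega f(\varphi, z, \omega)| \right)^{a_1} \left( \max_{|\alpha| \leq s_2} \sup_{\omega \in B} |\partial^\alpha_\omega f(\varphi, z, \omega)| \right)^{a_2}.
\]
Taking the supremum over $(\varphi, z)$ in the left-hand side and using the monotonicity of $u \mapsto u^{a_i}$ together with the submultiplicativity of the supremum gives (i). The constant $C_{s_1,s_2}$ depends on $s_1, s_2$ and the diameter of $B$, which is a fixed bounded set.

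For (ii), the Leibniz rule gives
\[
\partial^\alpha_\omega(fg) = \sum_{\beta + \gamma = \alpha} \binom{\alpha}{\beta} \partial^\beta_\omega f \cdot \partial^\gamma_\omega g,
\]
so that $|\partial^\alpha_\omega(fg)|_{\rho,\delta} \leq \sum_{\beta + \gamma = \alpha} \binom{\alpha}{\beta} |\partial^\beta_\omega f|_{\rho,\delta} \, |\partial^\gamma_\omega g|_{\rho,\delta}$. For each pair $(\beta,\gamma)$ with $|\beta|+|\gamma|=|\alpha| \leq s$, I apply (i) with $s_1 = 0$ and $s_2 = s$ separately to $f$ and $g$, obtaining
\[
|\partial^\beta_\omega f|_{\rho,\delta} \, |\partial^\gamma_\omega g|_{\rho,\delta}
\leq C_s \, \|f\|_{\rho,\delta,0}^{1 - |\beta|/s} \|f\|_{\rho,\delta,s}^{|\beta|/s} \|g\|_{\rho,\delta,0}^{1 - |\gamma|/s} \|g\|_{\rho,\delta,s}^{|\gamma|/s}.
\]
Writing $A = \|f\|_{\rho,\delta,s} \|g\|_{\rho,\delta,0}$ and $B = \|f\|_{\rho,\delta,0} \|g\|_{\rho,\delta,s}$, a direct rearrangement shows that this right-hand side equals $C_s \, A^{|\beta|/s} B^{|\gamma|/s} (\|f\|_{\rho,\delta,0} \|g\|_{\rho,\delta,0})^{1 - (|\beta|+|\gamma|)/s}$. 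Since $\|f\|_{\rho,\delta,0} \|g\|_{\rho,\delta,0} \leq A + B$ and the three exponents $|\beta|/s$, $|\gamma|/s$, $1 - (|\beta|+|\gamma|)/s$ are non-negative with sum $1$, the weighted AM--GM inequality gives
\[
A^{|\beta|/s} B^{|\gamma|/s} (\|f\|_{\rho,\delta,0} \|g\|_{\rho,\delta,0})^{1 - (|\beta|+|\gamma|)/s}
\leq \frac{|\beta|}{s} A + \frac{|\gamma|}{s} B + \left( 1 - \frac{|\beta|+|\gamma|}{s} \right)(A+B) \leq A + B.
\]
Summing over the finitely many $(\beta, \gamma)$ with $\beta + \gamma = \alpha$ and $|\alpha| \leq s$ absorbs the binomial factors into a new constant $C_s$, yielding (ii).

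The main (mild) obstacle is justifying the multi-variable Landau--Kolmogorov inequality on $B$ with uniform constants. This is not deep but requires the reduction via directional derivatives along segments of $B$, which is possible precisely because $B$ is (an intersection of a ball with an affine subspace, hence) convex; the rest of the argument is essentially bookkeeping with the Leibniz rule and Young/AM--GM.
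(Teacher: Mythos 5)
The paper's own proof of this proposition is a single line: \lq\lq A classical result -- see the appendix of \cite{Hor}.\rq\rq\ So there is nothing in the paper to compare against directly; the paper relies on the reference and you are reconstructing the classical argument from scratch.

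Your reconstruction is essentially correct and follows the standard route. For (ii), the derivation from (i) via Leibniz, the interpolation $\|f\|_{|\beta|}\le C\|f\|_0^{1-|\beta|/s}\|f\|_s^{|\beta|/s}$, the identity $A^{|\beta|/s}B^{|\gamma|/s}(\|f\|_0\|g\|_0)^{1-(|\beta|+|\gamma|)/s}$, the bound $\|f\|_0\|g\|_0\le A+B$ (using $\|f\|_0\le\|f\|_s$), and the three-term weighted AM--GM is exactly the textbook argument, and the arithmetic checks out. (Minor stylistic point: you reuse the letter $B$ both for the parameter domain and for $\|f\|_{\rho,\delta,0}\|g\|_{\rho,\delta,s}$, which is a notational clash though not an error.) You also correctly use the $C^s$-norm version of the interpolation inequality rather than the pure top-derivative Landau--Kolmogorov inequality, which is essential because the multiplicative form $\|f^{(k)}\|\le C\|f\|^{1-k/n}\|f^{(n)}\|^{k/n}$ is false on bounded domains (take $f$ affine with small slope: $f^{(n)}=0$ but $f'\ne0$); only the version with full $C^{s_i}$ norms survives.

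The one place where your sketch is incomplete is the multi-variable reduction in (i). You say the inequality on $B$ \lq\lq follows from the one-dimensional Kolmogorov inequality applied along line segments in $B$, together with the identity expressing each partial derivative of order $s$ as a linear combination of iterated directional derivatives.\rq\rq\ The polarization step (expressing mixed partials by pure directional derivatives along finitely many fixed directions) is fine. But once a direction $v$ is fixed, the chords of the convex set $B$ in direction $v$ that pass near $\partial B$ have arbitrarily small length, and the constant in the one-dimensional interpolation inequality on a segment of length $\ell$ blows up as $\ell\to 0$. Thus \lq\lq apply 1D Kolmogorov on each chord and take the sup\rq\rq\ does not yield a uniform constant without further work. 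The standard resolutions are either to first extend $f$ to a slightly larger ball with controlled $C^{s_2}$ norm (Whitney-type extension) and then use chords of uniformly bounded length, or to use the more careful splitting of Hörmander's appendix. Since $B$ here is a fixed unit ball (or its slice by an affine subspace), either device is available, but it does need to be invoked explicitly; as written, the segment argument is a gap. Everything downstream of (i), including all of (ii), is fine.
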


\begin{proof} 
A classical result  --  see the appendix of \cite{Hor} 
\end{proof}

\begin{Cor*}
Let $f,g \in \CC^{\w,\infty}(\T^d_\rho\times \bD^{d'}_\delta,B)$. Then

\begin{itemize}
\item[(i)]
$$||f||_{\rho,\de,1}^{n+1} ||f||_{\rho,\de,s-n}\le C_s
||f||_{\rho,\de,0}^{n+1}||f||_{\rho,\de,s+1}$$
for all non-negative numbers $s,n$

\item[(ii)]
$$
||f^n||_{\rho,\de,s}\ \le\ C_s^{\log(n)}||f||_{\rho,\de,s}||f||_{\rho,\de,0}^{n-1}$$
for all non-negative numbers $s,n$.
\end{itemize}
\end{Cor*}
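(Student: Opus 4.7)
The plan is to deduce each part of the Corollary from the corresponding part of Proposition~\ref{hadamard}: part (i) from two applications of the convexity estimate, and part (ii) from iterated use of the product estimate together with a binary-splitting argument. There is no genuine obstacle; the only points requiring attention are the exponent bookkeeping in (i) and the descent relation in (ii), both elementary.

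For (i), where I implicitly assume $0 \le n \le s$ so that $\|f\|_{\rho,\de,s-n}$ is defined, I would apply Proposition~\ref{hadamard}(i) twice, interpolating in both cases between the two extreme orders $s_1 = 0$ and $s_2 = s+1$. This gives
\begin{equation*}
\|f\|_{\rho,\de,1} \le C\, \|f\|_{\rho,\de,0}^{s/(s+1)}\, \|f\|_{\rho,\de,s+1}^{1/(s+1)}
\quad \text{and} \quad
\|f\|_{\rho,\de,s-n} \le C\, \|f\|_{\rho,\de,0}^{(n+1)/(s+1)}\, \|f\|_{\rho,\de,s+1}^{(s-n)/(s+1)}.
\end{equation*}
Raising the first inequality to the power $n+1$ and multiplying by the second, the exponents add to exactly $(n+1)s/(s+1)+(n+1)/(s+1) = n+1$ on $\|f\|_{\rho,\de,0}$ and to $(n+1)/(s+1)+(s-n)/(s+1) = 1$ on $\|f\|_{\rho,\de,s+1}$, yielding the stated inequality.

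For (ii), I would iterate the product estimate of Proposition~\ref{hadamard}(ii), combined with the submultiplicativity $\|f^k\|_{\rho,\de,0} \le \|f\|_{\rho,\de,0}^k$ of the sup norm. Setting $D_n := \|f^n\|_{\rho,\de,s}/(\|f\|_{\rho,\de,s}\, \|f\|_{\rho,\de,0}^{n-1})$, the factorization $f^n = f^{n_1} \cdot f^{n_2}$ with $n_1+n_2 = n$ yields $D_n \le C_s(D_{n_1} + D_{n_2})$. Taking $n_1 = \lceil n/2 \rceil$ and $n_2 = \lfloor n/2 \rfloor$ gives $D_n \le 2 C_s\, D_{\lceil n/2 \rceil}$, and with base case $D_1 = 1$, iterating on the order of $\log_2 n$ times produces a bound of the form $(2C_s)^{O(\log n)}$, which can be absorbed into an expression of the form $C_s^{\log n}$ after enlarging $C_s$.
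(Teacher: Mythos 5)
The paper's own proof of this corollary is a single line (``A computation''), so there is nothing to compare against at the level of detail; your argument fills in what is clearly the intended computation, and both parts rest on exactly the right tools (interpolation from Proposition~\ref{hadamard}(i) for part (i), the Leibniz-type bound of Proposition~\ref{hadamard}(ii) plus binary splitting for part (ii)).

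Part (i) is correct as written: both interpolations use the endpoints $s_1=0$, $s_2=s+1$, and the exponent bookkeeping $(n+1)\cdot\tfrac{s}{s+1}+\tfrac{n+1}{s+1}=n+1$ and $\tfrac{n+1}{s+1}+\tfrac{s-n}{s+1}=1$ checks out; with $0\le n\le s$ (the only range in which the statement is meaningful) the accumulated constant $C^{n+2}\le C^{s+2}$ depends only on $s$.

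Part (ii) has one small gap. From $D_n\le C_s\bigl(D_{n_1}+D_{n_2}\bigr)$ with $n_1=\lceil n/2\rceil$, $n_2=\lfloor n/2\rfloor$, you write $D_n\le 2C_s\,D_{\lceil n/2\rceil}$; but this presumes $D_{\lfloor n/2\rfloor}\le D_{\lceil n/2\rceil}$, which has not been established (the sequence $D_k$ is not obviously monotone). What the recursion actually gives is $D_n\le 2C_s\,\max\bigl(D_{\lceil n/2\rceil},D_{\lfloor n/2\rfloor}\bigr)$. The fix is routine: prove by strong induction that $D_k\le (2C_s)^{\lceil\log_2 k\rceil}$ for all $k\ge 1$. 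The base case $D_1=1$ is immediate; for $n\ge2$, both $n_1,n_2\le\lceil n/2\rceil$, and the identity $\lceil\log_2\lceil n/2\rceil\rceil+1=\lceil\log_2 n\rceil$ closes the induction. Then $(2C_s)^{\lceil\log_2 n\rceil}$ is absorbed into $C_s^{\log n}$ after enlarging $C_s$, exactly as you observe. With this small repair the proof is complete and matches what the paper leaves to the reader.
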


\begin{proof} 
A computation.
\end{proof}

\subsection{Composition}

\begin{Prop} \label{annexe.compose} 
Let $f,g \in \CC^{\w,\infty}(\T^d_\rho\times \bD^{d'}_\delta,B)$ and assume that
$$\|g\|_{\rho,\delta,0}\le \frac{h}2\le \frac12\min(\rho,\de).$$
Then
$$x\mapsto f(x+g(x,\o),\o)$$
belongs to $\CC^{\w,\infty}(\T^d_{\rho-h}\times \bD^{d'}_{\delta-h},B)$ and
\begin{itemize}

\item[(i)]
$h(x,\o)=f(x+g(x,\o),\o)-f(x,\o)$ verifies
$$ \| h\|_{\rho-h,\delta-h,s}
\leq C_s\frac1{h}(\| f\|_{\rho,\de,0} \|g\|_{\rho,\de,s}+ \|  f\|_{\rho,\de,s}\|g\|_{\rho,\de,0}).$$ 

\item[(ii)]
$k(x,\o)=f(x+g(x,\o),\o)-f(x,\o)-\langle \p_x f(x,\o),g(x)\rangle$ verifies
$$ \| k\|_{\rho-h,\delta-h,s}\leq 
C_s\frac1{h^2}(\|  f\|_{\rho,\de,0} \|g\|_{\rho,\de,s}+ \|  f\|_{\rho,\de,s}\|g\|_{\rho,\de,0}) \|g\|_{\rho,\de,0}.$$ 

\end{itemize}

\end{Prop}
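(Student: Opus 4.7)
My plan is to deduce both estimates from three standard tools: Taylor's integral formula in the $x$-variable, Cauchy's inequality on $x$-derivatives of $f$, and the convexity inequality of Proposition~\ref{hadamard}(ii). To begin I would write
$$h(x,\omega)=\int_0^1\langle(\partial_x f)(x+tg(x,\omega),\omega),\,g(x,\omega)\rangle\,dt$$
for part (i) and, by Taylor expansion with integral remainder to second order,
$$k(x,\omega)=\int_0^1(1-t)\,\langle g(x,\omega),\,(\partial_x^2 f)(x+tg(x,\omega),\omega)\,g(x,\omega)\rangle\,dt$$
for part (ii). The hypothesis $\|g\|_{\rho,\delta,0}\le h/2\le\tfrac12\min(\rho,\delta)$ ensures that $x+tg(x,\omega)\in\T^d_{\rho-h/2}\times\bD^{d'}_{\delta-h/2}$ for every $(x,\omega,t)\in\T^d_{\rho-h}\times\bD^{d'}_{\delta-h}\times B\times[0,1]$, so the integrands are holomorphic in $x$ on the shrunken polydisk, and in particular $x\mapsto f(x+g(x,\omega),\omega)$ belongs to $\CC^{\w,\infty}(\T^d_{\rho-h}\times\bD^{d'}_{\delta-h},B)$.

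To control the $\omega$-derivatives $\partial_\omega^\alpha$ with $|\alpha|\le s$ inside the integral, I would apply Leibniz together with Fa\`a di Bruno, which produces for (i) a finite linear combination of terms of the form
$$(\partial_x^{j+1}\partial_\omega^\beta f)(x+tg,\omega)\cdot\prod_{i=1}^{j+1}(\partial_\omega^{\gamma_i}g)(x,\omega),\qquad \beta+\sum_{i}\gamma_i=\alpha,\ j\ge0,$$
and the analogous formula for (ii) with $\partial_x^{j+2}$ in place of $\partial_x^{j+1}$ and $j+2$ factors of $g$. The crucial structural point—which I would verify by a straightforward induction on $|\alpha|$—is that the chain rule produces exactly one extra factor $\partial_\omega^{\gamma_i}g$ for every extra $x$-derivative on $f$, while the integrand contributes one (resp.\ two) baseline $g$-factors.

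The estimates then follow from a two-step bound on each such term. First, Cauchy's inequality in $x$ gives
$$\|\partial_x^{j+1}\partial_\omega^\beta f\|_{\rho-h/2,\delta,0}\le C_j h^{-(j+1)}\|f\|_{\rho,\delta,|\beta|},$$
and second, iterating Proposition~\ref{hadamard}(ii) on the product $\|f\|_{\rho,\delta,|\beta|}\prod_{i}\|g\|_{\rho,\delta,|\gamma_i|}$ (with $|\beta|+\sum|\gamma_i|\le s$ and $j+1$ factors of $g$) dominates it by
$$C_s\Bigl(\|f\|_{\rho,\delta,s}\|g\|_{\rho,\delta,0}^{\,j+1}+\|f\|_{\rho,\delta,0}\|g\|_{\rho,\delta,s}\|g\|_{\rho,\delta,0}^{\,j}\Bigr).$$
Multiplying by $h^{-(j+1)}$ and invoking $\|g\|_{\rho,\delta,0}\le h/2$ to absorb $h^{-j}\|g\|_{\rho,\delta,0}^{\,j}\le 2^{-j}$, each term is dominated by
$$C_s\,2^{-j}\,h^{-1}\Bigl(\|f\|_{\rho,\delta,s}\|g\|_{\rho,\delta,0}+\|f\|_{\rho,\delta,0}\|g\|_{\rho,\delta,s}\Bigr).$$
Summing over the finitely many partitions $(j,\beta,(\gamma_i))$ and integrating in $t\in[0,1]$ yields (i). For (ii) the same computation with one additional $g$-factor yields the prefactor $h^{-2}$ together with the trailing $\|g\|_{\rho,\delta,0}$ displayed explicitly in the statement.

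\emph{The main obstacle} is simply the combinatorial bookkeeping in the Fa\`a di Bruno step: one has to confirm that in every term of the expansion the number of $g$-factors equals exactly the number of $x$-derivatives placed on $f$. This structural identity—routine but not completely mechanical—is precisely what allows every chain-rule-induced factor of $h^{-1}$ to be paid for by one small factor $\|g\|_{\rho,\delta,0}\le h/2$, and is the reason the final estimate loses only a single power of $h$ (resp.\ two powers for (ii)) rather than the naive $h^{-(s+1)}$ one would obtain without exploiting the smallness of $g$.
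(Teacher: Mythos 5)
Your argument is correct, but it follows a genuinely different route from the paper's. Where you write $h$ and $k$ as \emph{integral-remainder} Taylor expansions (first and second order) and then apply Leibniz/Fa\`a di Bruno to the $\omega$-derivatives of the resulting integrands, the paper expands the composition into the \emph{full power series} $\sum_{n\ge 0}\partial_x^n f(x,\omega)\,g^n(x,\omega)/n!$, truncates at $n=1$ (resp.\ $n=2$), and bounds the tail term by term using the same two ingredients: Cauchy estimates for $\partial_x^n f$ (cost $n!/h^n$, cancelled by $1/n!$) and the iterated convexity bound $\|g^n\|_{\rho,\delta,s}\le C_s^{\log n}\|g\|_{\rho,\delta,0}^{n-1}\|g\|_{\rho,\delta,s}$. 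The smallness $\|g\|_{\rho,\delta,0}\le h/2$ then makes the tail a convergent geometric series starting with the factor $h^{-1}$ (resp.\ $h^{-2}\|g\|_{\rho,\delta,0}$). The combinatorial bookkeeping you flag as the main obstacle (exactly one extra $g$-factor per extra $\partial_x$ on $f$) is the analogue of the paper's observation that the $n$-th Taylor term carries both $\partial_x^n f$ and $g^n$; both approaches pay each $h^{-1}$ from Cauchy with one factor of $\|g\|_{\rho,\delta,0}\le h/2$. Each proof restricts to what it handles cleanly: the paper explicitly reduces to the scalar case, whereas your Fa\`a di Bruno setup is the one that scales to several variables at the price of heavier notation.

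One small correction: the step where you dominate $\|f\|_{\rho,\delta,|\beta|}\prod_i\|g\|_{\rho,\delta,|\gamma_i|}$ by $\|f\|_{\rho,\delta,s}\|g\|_{\rho,\delta,0}^{j+1}+\|f\|_{\rho,\delta,0}\|g\|_{\rho,\delta,s}\|g\|_{\rho,\delta,0}^{j}$ is an application of the \emph{interpolation} inequality, Proposition~\ref{hadamard}(i) together with the constraint $|\beta|+\sum|\gamma_i|\le s$ and the weighted AM--GM inequality, not of Proposition~\ref{hadamard}(ii); part (ii) is a product rule for the $\|\cdot\|_{\rho,\delta,s}$-norm of a product of \emph{functions}, not a convexity bound on products of norms of different orders. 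The estimate itself is right; only the citation is off.
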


\begin{proof}  We will prove the statements when $x$ and $g(x,\o)$ are 
scalars. Notice
$$
f(x+g(x,\omega),\omega)=\sum_{n=0}^\infty \frac{\partial^n f}{\partial x^n}(x,\omega) \frac{g^n(x,\omega)  }{n!}.
$$

By Cauchy estimates we have for $n\geq 0$
$$ \aa{\frac{\partial^n f}{\partial x^n}}_{\rho-h,\de-h,s} \leq  \frac{1}{h^{n}} \aa{f}_{\de,s} n!$$
and, by  the  Hadamard estimates we have that
$$ \| g^n\|_{\rho,\delta,s}
\leq C_s^{\log(n)}\|g\|^{n-1}_{\rho,\de,0} \|g\|_{\rho,\de,s}.$$

Hence, for $j\ge 1$,
$$
\| \sum_{n=j}^\infty \frac{\partial^n f}{\partial x^n}(x,\omega) \frac{g^n(x,\omega)  }{n!} \|_{\rho-h,\delta-h,s}\leq$$  
$$C_s  \|  f\|_{\rho,\delta,s} 
\sum_{n\ge j}
(\frac{ \| g\|_{\rho,\delta,0}}{h})^n
+
\|  f\|_{\rho,\delta,0} \frac{ \| g\|_{\rho,\delta,s}}{h}
\sum_{n\ge j-1}
C_s^{\log(n+2)}(\frac{ \| g\|_{\rho,\delta,0}}{h})^n.
$$
\end{proof} 

\subsection{Inversion}

\begin{Prop} \label{annexe.inverse} 
Let $f \in \CC^{\w,\infty}(\T^d_\rho\times \bD^{d'}_\delta,B)$ and assume that
$$\|f\|_{\rho,\delta,1}\lsim \frac{h}2\le \frac12\min(\rho,\de).$$
Then
$$\T^d_\rho\times \bD^{d'}_\delta\ni x\mapsto \tilde f(x,\o)=x+ f(x,\o)$$
is invertible for all $\o\in B$ with an inverse 
$\T^d_{\rho-h}\times \bD^{d'}_{\delta-h}\ni y\mapsto \tilde g(y,\o)=y+g(y,\o)$ satisfying
$$\|g\|_{\rho-h,\delta-h,s} \lsim C_s \|f\|_{\rho,\delta,s} $$
for all $s\in\N$.
\end{Prop}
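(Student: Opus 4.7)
The plan is to prove existence of $\tilde g$ by a contraction argument on the fixed-point equation, and then to derive the tame estimates on $g$ by a direct application of the composition bound of Proposition \ref{annexe.compose}.

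For existence, I would rewrite $\tilde f(x,\omega)=y$ via the ansatz $x=y+g(y,\omega)$ as the fixed-point equation
\begin{equation*}
g(y,\omega) = -f(y+g(y,\omega),\omega).
\end{equation*}
Since $\|f\|_{\rho,\delta,0}\le \|f\|_{\rho,\delta,1}\lesssim h/2$, the map $g\mapsto -f(\cdot+g(\cdot),\cdot)$ sends the closed ball of radius $h/2$ (in the sup-norm over $\T^d_{\rho-h}\times\bD^{d'}_{\delta-h}\times B$) into itself. A Cauchy estimate gives $\|\partial_x f\|_{\rho-h/2,\delta-h/2,0}\lesssim \|f\|_0/h < 1$, provided the implicit constant in the hypothesis is taken small enough, so the map is a contraction in the $s=0$ norm and produces a unique fixed point $g$ with $\|g\|_{\rho-h,\delta-h,0}\lesssim \|f\|_{\rho,\delta,0}$. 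Holomorphy in $y$ passes to the uniform limit, smoothness in $\omega$ follows fiberwise from the implicit function theorem, and $\tilde g=\id+g$ is checked to be a two-sided inverse by running the symmetric iteration on $\tilde f\circ\tilde g-\id$.

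For the tame $s$-estimate, I would apply Proposition \ref{annexe.compose}(i) directly to the identity $g=-f\circ(\id+g)$, using a pair of nested domains with loss $h/2$, to obtain
\begin{equation*}
\|g\|_{\rho-h,\delta-h,s} \le \|f\|_{\rho-h/2,\delta-h/2,s} + C_s\,\frac{2}{h}\bigl(\|f\|_{0}\,\|g\|_{\rho-h/2,\delta-h/2,s}+\|f\|_{s}\,\|g\|_{\rho-h/2,\delta-h/2,0}\bigr).
\end{equation*}
Because $\|f\|_{0}/h$ is small by hypothesis, the $\|g\|_s$ term on the right has a coefficient $<\tfrac12$ and can be absorbed into the left, yielding $\|g\|_{\rho-h,\delta-h,s}\lesssim C_s\|f\|_{\rho,\delta,s}$ once the intermediate $\|g\|_{\rho-h/2,\delta-h/2,s}$ is controlled by running the same inequality on a second, slightly fatter pair of domains (a short telescoping in the radial parameter).

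The main subtlety is the careful bookkeeping of the domain losses: the composition estimate carries a $1/h$ factor that is compensated only by the matching smallness $\|f\|_0\lesssim h$, and one must arrange to apply it on a domain strictly larger than the target so that the term being absorbed has its norm taken on an intermediate domain. An equivalent and perhaps more transparent route is to differentiate the fixed-point identity successively in $\omega$: at order $s$ one gets $(I+\partial_x f\circ\tilde g)\,\partial_\omega^s g = -\partial_\omega^s f\circ\tilde g + (\text{lower-order terms in } \partial_\omega^{<s}g)$, where the lower-order terms are handled by Hadamard's convexity (Proposition \ref{hadamard}) and an induction on $s$. In both approaches the key structural point is that $\|g\|_s$ enters the right-hand side only multiplied by $\|f\|_0/h\ll 1$, so the induction closes linearly in $\|f\|_s$.
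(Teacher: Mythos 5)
Both of the routes you sketch are viable, but the one you lead with has a gap in the form you present it, and the "equivalent route" you mention at the end is in fact the paper's actual argument.

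Concretely, your primary argument applies Proposition~\ref{annexe.compose}(i) on the pair of domains $(\rho-h/2,\delta-h/2)\to(\rho-h,\delta-h)$ and so produces $\|g\|_{\rho-h/2,\delta-h/2,s}$ on the right-hand side, one notch larger than the domain on the left. You propose to recover this by "a short telescoping in the radial parameter." This cannot converge: to estimate $\|g\|_{\rho-h/2,s}$ you would need $\|g\|_{\rho-h/4,s}$, then $\|g\|_{\rho-h/8,s}$, and so on, and the factor $1/(\text{loss})$ coming from Cauchy estimates in the composition bound grows geometrically as the losses shrink; the products $\prod_k(\text{const}\cdot 2^k\|f\|_0/h)$ diverge no matter how small $\|f\|_0/h$ is, so the chain never closes. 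The correct observation is that no telescoping is needed: in the proof of Proposition~\ref{annexe.compose}, $f\circ\tilde g$ is Taylor-expanded at the base point $x$ in the \emph{target} domain, so the $\|g^n\|$ factors can be measured there; i.e.\ the estimate holds in the sharper form
\begin{equation*}
\|f\circ\tilde g-f\|_{\rho-h,\delta-h,s}\lesssim\frac1h\bigl(\|f\|_{\rho,\delta,0}\,\|g\|_{\rho-h,\delta-h,s}+\|f\|_{\rho,\delta,s}\,\|g\|_{\rho-h,\delta-h,0}\bigr),
\end{equation*}
with the $g$-norms on the \emph{same} domain as the left-hand side. With this version the identity $g=-f-(f\circ\tilde g-f)$ yields $\|g\|_{\rho-h,\delta-h,s}\le\|f\|_{\rho,\delta,s}+\tfrac{C}{h}\|f\|_{\rho,\delta,0}\|g\|_{\rho-h,\delta-h,s}+\tfrac{C}{h}\|f\|_{\rho,\delta,s}\|g\|_{\rho-h,\delta-h,0}$, and since $\|f\|_0/h$ is small the $\|g\|_s$-term absorbs on the left in a single step.

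The paper does not organize the estimate this way. It differentiates the fixed-point identity $g+f\circ\tilde g=0$ in $\omega$, getting $(I+(\partial_x f)\circ\tilde g)\partial_\omega g=-(\partial_\omega f)\circ\tilde g$, inverts the prefactor using $\|\partial_x f\|\lesssim\|f\|_1/h\ll1$ to obtain the $s=1$ bound, and then for higher orders applies $\partial_\omega^n$ to the first-order identity and combines Hadamard convexity (Proposition~\ref{hadamard}) with Proposition~\ref{annexe.compose}(i) to obtain the recurrence $\|g\|_{\rho-h,\delta-h,n+1}\le C_n(\|f\|_{\rho,\delta,n+1}+\|g\|_{\rho-h,\delta-h,n})$, which closes by a finite induction in the derivative order $n$ \emph{at a fixed domain} -- precisely because the $g$-norms never escape to a larger domain. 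This is exactly the alternative you describe in your last sentences, so your instinct that $\|g\|_s$ should only ever enter multiplied by the small quantity $\|f\|_0/h$ is the right structural point; you should either adopt the $\omega$-differentiation argument, or explicitly record the refined composition estimate, but drop the telescoping, which does not work.
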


\begin{proof}
It is clear by the implicit function theorem that $g$ exists and that
$$\|g\|_{\rho-h,\delta-h,0}\lsim \|f\|_{\rho,\delta,0}\le h.$$
Since $g(y,\o)+ f(y+g(y,\o),\o)=0$, it follows that
$$
\p_\o g+(\p_x f)\circ \tilde g\cdot\p_\o g + (\p_\o f)\circ \tilde g=0$$
and, hence,
$$
\aa{g}_{\rho-h,\de-h,1}\lsim \aa{f}_{\rho,\delta,1}.$$

Moreover, for $n\ge1$
$$\p_\o^{n+1}g +
\p_\o^n((\p_x f)\circ\tilde g\cdot \p_\o g))+
\p_\o^{n}((\p_\o f)\circ \tilde g)=0,$$
from which we derive
\begin{multline*}
\aa{g(y,\o)}_{\rho-h,\delta-h,n+1}\lsim
\aa{(\p_x f)\circ\tilde g\cdot \p_\o g}_{\rho-h,\delta-h,n}+
\aa{(\p_\o f)\circ\tilde g}_{\rho-h,\delta-h,n},
\end{multline*}
and, by Proposition \ref{hadamard},
\begin{multline*}
\aa{g(y,\o)}_{\rho-h,\delta-h,n+1}\le C_n
\aa{(\p_x f)\circ \tilde g}_{\rho-h,\delta-h,n}\aa{f}_{\rho,\delta,1}\\
+\aa{(\p_\o f)\circ\tilde g}_{\rho-h,\delta-h,n}.
\end{multline*}

By Proposition \ref{annexe.compose}(i)
\begin{multline*}
\aa{g}_{\rho-h,\de-h,n+1}\le C_n
(\frac1h\aa{f}_{\rho,\delta,1}\aa{f}_{\rho,\delta,n}+
\aa{f}_{\rho,\delta,n+1}\\+
\aa{ f}_{\rho,\de,1}\aa{g}_{\rho-h,\de-h,n}).
\end{multline*}
By assumption $\aa{f}_{\rho,\delta,1}\lsim h$, so
$$
\aa{g}_{\rho-h,\de-h,n+1}\le C_n
(\aa{f}_{\rho,\delta,n+1}+\aa{g}_{\rho-h,\de-h,n})
$$
and the result follows by a finite induction.
\end{proof}

\end{document}